\newtheorem{thm}{Theorem}
\newtheorem{prop}[thm]{Proposition}
\newtheorem{lemma}[thm]{Lemma}
\theoremstyle{definition}
\theoremstyle{remark}
\begin{document}
	\title[On the metric theory of approximations by reduced fractions]{On the metric theory of approximations by reduced fractions: a quantitative Koukoulopoulos--Maynard theorem}
	\author{Christoph Aistleitner}
	\author{Bence Borda}
	\author{Manuel Hauke}
	\address{Graz University of Technology, Institute of Analysis and Number Theory, Steyrergasse 30/II, 8010 Graz, Austria}
	\email{aistleitner@math.tugraz.at}
	\email{borda@math.tugraz.at}
	\email{hauke@math.tugraz.at}
	
	\subjclass[2020]{Primary 11J83; Secondary 11A05, 11J04, 11K60}
	\keywords{Diophantine approximation, metric number theory, Duffin--Schaeffer conjecture, Koukoulopoulos--Maynard theorem} 
	
	\renewcommand{\labelenumi}{\alph{enumi})}
	
	\newcommand{\mods}[1]{\,(\mathrm{mod}\,{#1})}
	
	\begin{abstract} 
		Let $\psi: \mathbb{N} \to [0,1/2]$ be given. The Duffin--Schaeffer conjecture, recently resolved by Koukoulopoulos and Maynard, asserts that for almost all reals $\alpha$ there are infinitely many coprime solutions $(p,q)$ to the inequality $|\alpha - p/q| < \psi(q)/q$, provided that the series $\sum_{q=1}^\infty \varphi(q) \psi(q) / q$ is divergent. In the present paper, we establish a quantitative version of this result, by showing that for almost all $\alpha$ the number of coprime solutions $(p,q)$, subject to $q \leq Q$, is of asymptotic order $\sum_{q=1}^Q 2 \varphi(q) \psi(q) / q$. The proof relies on the method of GCD graphs as invented by Koukoulopoulos and Maynard, together with a refined overlap estimate coming from sieve theory, and number-theoretic input on the ``anatomy of integers''. The key phenomenon is that the system of approximation sets exhibits ``asymptotic independence on average'' as the total mass of the set system increases.
	\end{abstract}
	
	\maketitle
	
	\section{Introduction and statement of results}
	
	A foundational result in Diophantine approximation is Dirichlet's approximation theorem, which asserts that for every real number $\alpha$ there are infinitely many coprime solutions $(p,q)$ to the inequality
	\begin{equation} \label{diri}
		\left| \alpha - \frac{p}{q} \right| < \frac{1}{q^2}.
	\end{equation}
	It is well-known that this result is optimal up to constant factors for numbers $\alpha$ whose partial quotients in the continued fraction representation are bounded (so-called badly approximable numbers). Metric number theory asks to what extent \eqref{diri} can be improved for typical reals $\alpha$, in the sense that the exceptional set has vanishing Lebesgue measure.

	One of the fundamental results of metric Diophantine approximation is Khintchine's theorem~\cite{khin}. Let $\psi(q)$ be a non-negative sequence, and suppose that $q \psi(q)$ is non-increasing. Then the inequality
	\begin{equation} \label{khin}
		\left| \alpha - \frac{p}{q} \right| < \frac{\psi(q)}{q}
	\end{equation}
	has infinitely many integer solutions $(p,q)$ for almost all real numbers $\alpha$, provided that the series $\sum_{q=1}^\infty \psi(q)$ diverges. In contrast, inequality \eqref{khin} has only finitely many solutions for almost all $\alpha$ if this series converges. Very roughly speaking, this says that for typical reals the Dirichlet approximation theorem can be improved by a factor of logarithmic order. By periodicity, it is sufficient to consider $\alpha \in [0,1]$. It can easily be seen that Khintchine's theorem addresses the question whether the set system
	$$
	\bigcup_{p=0}^q \left( \frac{p}{q} - \frac{\psi(q)}{q}, \frac{p}{q} + \frac{\psi(q)}{q} \right) \cap [0,1], \qquad q=1,2,\dots,
	$$
	contains a given real $\alpha$ for infinitely resp.\ only finitely many values of $q$. If we assume that $\psi(q) \leq 1/2$ (as we will throughout this paper, to avoid degenerate situations), then the measure of such a set is exactly $2\psi(q)$. Thus the ``only finitely many'' part of Khintchine's theorem is a straightforward application of the convergence part of the Borel--Cantelli lemma. The ``infinitely many'' part of the theorem, however, is much more delicate since the divergence part of the Borel--Cantelli lemma requires some form of stochastic independence. The purpose of the monotonicity condition in the statement of Khintchine's theorem is to guarantee this stochastic independence property of the set system.
	
	Duffin and Schaeffer~\cite{duff} showed that Khintchine's theorem generally fails without the monotonicity condition. More precisely, they constructed a function $\psi$ which is supported on a set of very smooth integers (having a large number of small prime factors), such that $\sum_{q=1}^{\infty} \psi(q)$ diverges, but for almost all $\alpha$ there are only finitely many solutions to \eqref{khin}. From a probabilistic perspective, the counterexample of Duffin and Schaeffer exploits the lack of stochastic independence in the set system, by constructing a special configuration where the overlaps between different sets of the system are too large; the crucial point here is that a fraction $p/q$ can have many different representations as a quotient of integers (as long as non-reduced representations are allowed), and thus may appear in many different elements of the set system.

Duffin and Schaeffer suggested that this lack of independence could be overcome by switching to the coprime setting. More precisely, the Duffin--Schaeffer conjecture asserted that for almost all $\alpha$ there are infinitely many \emph{coprime} solutions $(p,q)$ to \eqref{khin} if and only if the series $\sum_{q=1}^\infty \varphi(q) \psi(q)/q$ diverges, where $\varphi$ denotes the Euler totient function. Let 
	\begin{equation} \label{eq_def}
		\mathcal{A}_q := \bigcup_{\substack{1 \leq p \leq q,\\ \gcd(p,q)=1}} \left( \frac{p}{q} - \frac{\psi(q)}{q}, \frac{p}{q} + \frac{\psi(q)}{q} \right), \qquad q=1,2,\ldots .
	\end{equation}
	Then, writing $\lambda$ for the Lebesgue measure and again assuming that $\psi(q) \leq 1/2$ for all $q$, we have 
	$$
	\lambda (\mathcal{A}_q ) = \frac{2 \varphi(q) \psi(q)}{q}.
	$$
	Thus the ``only finitely many'' part of the Duffin--Schaeffer conjecture is again a direct consequence of the convergence part of the Borel--Cantelli lemma. However, the divergence part of the Duffin--Schaeffer conjecture has resisted a resolution for many decades. After important contributions of Gallagher~\cite{gall}, Erd\H os~\cite{erd}, Vaaler~\cite{vaal}, Pollington and Vaughan~\cite{pv}, and Beresnevich and Velani~\cite{ber_ve}, the Duffin--Schaeffer conjecture was finally solved in full generality by Koukoulopoulos and Maynard~\cite{km} in 2020. Their argument relies on an ingenious construction of what they call ``GCD graphs''. This allows them to implement a step-by-step quality increment strategy until they finally arrive at a situation where they can completely control the divisor structure which is at the heart of the problem. The final, number-theoretic input, is an ``anatomy of integers'' statement that quantifies the observation that there are only few integers that have many small prime factors. 
	
	In the present paper, we prove a quantitative version of the Koukoulopoulos--Maynard theorem. Their result states that there are infinitely many coprime solutions to \eqref{khin} for almost all $\alpha$ if the sum of measures diverges. We show that for almost all $\alpha$ the number of solutions in fact grows proportionally to the sum of measures.	
	\begin{thm} \label{th1}
		Let $\psi:~\mathbb{N} \to [0,1/2]$ be a function such that $\sum_{q=1}^{\infty} \frac{\varphi(q) \psi(q)}{q}=\infty$. Write $S(Q)=S(Q,\alpha)$ for the number of coprime solutions $(p,q)$ to the inequality
		$$
		\left| \alpha - \frac{p}{q} \right| < \frac{\psi(q)}{q}, \qquad \text{subject to $q \leq Q$},
		$$
		and let
		\begin{equation} \label{psiq_def}
			\Psi(Q) = \sum_{q=1}^Q \frac{2 \varphi(q) \psi(q)}{q}.
		\end{equation}
		Let $C>0$ be arbitrary. Then for almost all $\alpha$,
		$$
		S(Q) = \Psi(Q) \left( 1 + O \left(\frac{1}{(\log \Psi(Q))^{C}}\right) \right) \qquad \text{as } Q \to \infty .
		$$
	\end{thm}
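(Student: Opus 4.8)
The plan is to establish the asymptotic $S(Q) \sim \Psi(Q)$ via a second-moment (variance) argument, upgraded to the quantitative error term by a careful resolution into dyadic blocks. Writing $\mathbf{1}_{\mathcal{A}_q}$ for the indicator of the set $\mathcal{A}_q$ in \eqref{eq_def}, one has $S(Q,\alpha) = \sum_{q \leq Q} \mathbf{1}_{\mathcal{A}_q}(\alpha)$ up to a negligible discrepancy (a fraction $p/q$ counted with multiplicity versus a point lying in $\mathcal{A}_q$; since $\psi(q) \leq 1/2$ the intervals around distinct reduced $p/q$ with the same $q$ are disjoint, so this is exact). Its mean is $\int_0^1 S(Q,\alpha)\,d\alpha = \sum_{q\leq Q} \lambda(\mathcal{A}_q) = \Psi(Q)$. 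The crux is to bound the variance
\begin{equation*}
	V(Q) := \int_0^1 \bigl( S(Q,\alpha) - \Psi(Q) \bigr)^2 \, d\alpha = \sum_{q, r \leq Q} \Bigl( \lambda(\mathcal{A}_q \cap \mathcal{A}_r) - \lambda(\mathcal{A}_q)\lambda(\mathcal{A}_r) \Bigr).
\end{equation*}
The diagonal terms $q = r$ contribute $\sum_{q\leq Q}(\lambda(\mathcal{A}_q) - \lambda(\mathcal{A}_q)^2) \leq \Psi(Q)$, which is harmless. Everything hinges on controlling the off-diagonal overlaps $\lambda(\mathcal{A}_q \cap \mathcal{A}_r)$ and showing that, on average, they behave like the independent value $\lambda(\mathcal{A}_q)\lambda(\mathcal{A}_r)$ — this is the ``asymptotic independence on average'' phenomenon mentioned in the abstract.

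For the overlap estimate I would invoke the machinery of GCD graphs of Koukoulopoulos and Maynard, combined with the refined sieve-theoretic overlap bound alluded to in the abstract. The standard Pollington--Vaughan-type inequality gives
\begin{equation*}
	\lambda(\mathcal{A}_q \cap \mathcal{A}_r) \ll \lambda(\mathcal{A}_q)\lambda(\mathcal{A}_r) \prod_{\substack{p \mid \frac{qr}{\gcd(q,r)^2} \\ p > D}} \left( 1 + O\!\left(\frac{1}{p}\right) \right)
\end{equation*}
for a suitable threshold $D$, but this alone is too weak to get a power-of-log saving; the GCD graph framework is what lets one partition the pairs $(q,r)$ into sub-systems on which the local densities are nearly constant and then extract cancellation. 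The strategy is: group the indices $q \in (Q/2, Q]$ (and then sum dyadically over scales, noting $\Psi$ grows slowly so a dyadic decomposition costs only $\log Q$) into classes according to the ``anatomy'' of $q$ — roughly, the set of small prime factors and the value of $\varphi(q)/q$. Within and across these classes, the anatomy-of-integers input bounds the number of $q \leq Q$ whose ratio $\varphi(q)/q$ deviates from its typical value, and bounds the contribution of $q$ with many small prime factors (the Duffin--Schaeffer-type obstruction). Summing, one should obtain $V(Q) \ll \Psi(Q)^2 / (\log \Psi(Q))^{2C+2}$ or better for every fixed $C$, after absorbing the dyadic and bookkeeping losses.

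From such a variance bound the theorem follows by a by-now standard argument: Chebyshev's inequality along a sufficiently sparse sequence $Q_k$ chosen so that $\Psi(Q_k)$ grows geometrically (possible since $\Psi \to \infty$), combined with Borel--Cantelli, gives $S(Q_k) = \Psi(Q_k)(1 + O((\log \Psi(Q_k))^{-C}))$ almost surely; monotonicity of $S(Q)$ in $Q$ and the slow variation of $\Psi$ (so that $\Psi(Q_{k+1})/\Psi(Q_k) \to 1$, which forces a denser-than-geometric spacing, handled by a second Borel--Cantelli over a finer net, or by a Gál--Koksma-type maximal inequality) then interpolates to all $Q$. The main obstacle, by far, is the variance estimate: extracting a \emph{quantitative}, arbitrarily large power-of-log saving from the GCD-graph/sieve overlap analysis, rather than the mere $o(\Psi(Q)^2)$ that would reprove Koukoulopoulos--Maynard. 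This requires pushing the overlap estimate to its sharpest form — in particular tracking the secondary main term in the sieve bound and showing the error it leaves is summable with room to spare — and carefully controlling the accumulated losses from the dyadic decomposition and from the partition into GCD-graph sub-systems so that they do not eat the saving.
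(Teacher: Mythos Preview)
Your overall architecture is exactly the paper's: reduce Theorem~\ref{th1} to a variance bound (the paper's Theorem~\ref{th2}), then conclude via Chebyshev along a sub-geometric sequence $Q_k$ (the paper takes $\Psi(Q_k)\approx e^{k^{1/\sqrt{C}}}$, which is precisely your ``denser-than-geometric'' option), Borel--Cantelli, and monotone interpolation. You also correctly identify the two ingredients needed for the variance estimate: a \emph{refined} overlap bound with an explicit main term $\lambda(\mathcal{A}_q)\lambda(\mathcal{A}_r)$ plus a small multiplicative error (not just the Pollington--Vaughan $\ll$), and the Koukoulopoulos--Maynard GCD-graph machinery supplemented by anatomy-of-integers input.

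Where your sketch drifts from the paper is in the description of \emph{how} the variance saving is extracted. There is no dyadic decomposition in $Q$, and one does not partition single indices $q$ by their anatomy. Instead the paper partitions \emph{pairs} $(q,r)$ according to the size of $D(q,r)=\max(r\psi(q),q\psi(r))/\gcd(q,r)$ and of the tail sum $L_s(q,r)=\sum_{p\mid qr/\gcd(q,r)^2,\,p\ge s}1/p$, into five regions $\mathcal{E}^1,\dots,\mathcal{E}^5$. The ``good'' region (large $D$, small $L$) is handled directly by the sharp overlap estimate; the remaining regions are controlled by two second-moment bounds (Propositions~\ref{prop_secondmoment1} and~\ref{prop_secondmoment2}) proved via a quantitatively sharpened GCD-graph iteration that tracks quality gain against density loss. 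Your suggestion to classify individual $q$ by $\varphi(q)/q$ and small prime factors would not by itself capture the pairwise quantity $D(q,r)$ that governs both the overlap estimate and the GCD-graph argument, so as written that part of the sketch would not close; but the high-level reduction and the endgame are correct.
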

	
	It is not clear to what extent the error term in the theorem can be improved. It seems to us that any result which contains a power saving, i.e.\ has a multiplicative error of order $(1 + O(\Psi(Q)^{-\varepsilon}))$ for some $\varepsilon>0$, would require a substantial improvement of the argument in the present paper. By analogy with other results from metric number theory it is reasonable to assume that Theorem~\ref{th1} actually holds with an error term $(1 + O(\Psi(Q)^{-1/2+\varepsilon}))$ for any $\varepsilon>0$, and probably even $(1 + O(\Psi(Q)^{-1/2} (\log \Psi(Q))^c))$ for some appropriate $c$. We note in passing that very precise metric estimates for the asymptotic order of $S(Q)$ are known when an extra monotonicity assumption is imposed upon $\psi$, in the spirit of Khintchine's original result; see for example Chapter 3 of~\cite{phil} and Chapter 4 of~\cite{harman}. However, from a technical perspective, the problem is of a very different nature when this extra monotonicity assumption is made. The results for the monotonic case imply as a corollary that Theorem~\ref{th1} above cannot hold in general with a multiplicative error of order $(1 + O(\Psi(Q)^{-1/2}))$ or less.
	
	The key problem in the metric theory of approximations by reduced fractions is to control the measure of the overlaps $\mathcal{A}_q \cap \mathcal{A}_r$ in some averaged sense. Pairwise independence $\lambda (\mathcal{A}_q \cap \mathcal{A}_r) = \lambda(\mathcal{A}_q) \lambda(\mathcal{A}_r)$ would allow a direct application of the second Borel--Cantelli lemma, but it turns out that $\lambda(\mathcal{A}_q \cap \mathcal{A}_r)$ can exceed $\lambda(\mathcal{A}_q) \lambda(\mathcal{A}_r)$ by a factor as large as $\log \log (qr)$ for some configurations of $q,r,\psi$. Such an exceedingly large overlap can happen if there are many small prime factors dividing $q$ but not dividing $r$, or vice versa, and if simultaneously the greatest common divisor of $q$ and $r$ lies in a certain critical range (which is determined by the values of $\psi(q)$ and $\psi(r)$). The crucial point then is to show that such large extra factors appear only for a small number of pairs $q,r$. Consider the quotient
	$$
	\frac{\sum_{q,r \leq Q} \lambda( \mathcal{A}_q \cap \mathcal{A}_r)}{\left( \sum_{q=1}^Q \lambda(\mathcal{A}_q) \right)^2} = \frac{\sum_{q,r \leq Q} \lambda( \mathcal{A}_q \cap \mathcal{A}_r)}{\Psi(Q)^2}.
	$$
	Without imposing an absolute lower bound on $\Psi(Q)$, this quotient can be arbitrarily large. The main breakthrough of Koukoulopoulos and Maynard was to prove that 
	\begin{equation}\label{KMtechnical}
	\frac{\sum_{q,r \leq Q} \lambda( \mathcal{A}_q \cap \mathcal{A}_r)}{\Psi(Q)^2} \ll 1 \qquad \text{provided that} \quad \Psi(Q) \geq 1.
	\end{equation}
	This property is called \emph{quasi-independence on average}, and is sufficient for an application of the second Borel--Cantelli lemma (in the Erd\H os--R\'enyi formulation of the lemma) --  cf.\ \cite{bv}. In the present paper, we show that even more is true: we have
	$$
	\frac{\sum_{q,r \leq Q} \lambda( \mathcal{A}_q \cap \mathcal{A}_r)}{\Psi(Q)^2} \to 1 \qquad \text{as} \quad \Psi(Q) \to \infty.
	$$
	Thus the set system $(\mathcal{A}_q)_{q \geq 1}$ moves towards pairwise independence on average as the total mass of the set system (the sum of measures of the approximation sets) tends towards infinity. Since we consider this fact, which is the key ingredient in our proof of Theorem~\ref{th1}, to be very interesting in its own right, we state it below as a separate theorem.
	
	\begin{thm} \label{th2}
		Let $\psi:~\mathbb{N} \to [0,1/2]$ be a function. Let the sets $\mathcal{A}_q$, $q=1,2,\dots$, be defined as in \eqref{eq_def}, and let $\Psi(Q)$ be defined as in \eqref{psiq_def}. Let $C>0$ be arbitrary. For any $Q \in \mathbb{N}$ such that $\Psi(Q) \ge 2$, we have
		$$
		\sum_{q,r \leq Q} \lambda( \mathcal{A}_q \cap \mathcal{A}_r) - \Psi(Q)^2 = O \left(\frac{\Psi(Q)^2}{(\log \Psi(Q))^{C}} \right)
		$$
		with an implied constant depending only on $C$.
	\end{thm}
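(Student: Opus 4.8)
The lower bound $\sum_{q,r\le Q}\lambda(\mathcal A_q\cap\mathcal A_r)\ge\Psi(Q)^2$ is free: every $\mathcal A_q$ is contained in $\mathbb R/\mathbb Z$, which has measure $1$, so Cauchy--Schwarz applied to $f:=\sum_{q\le Q}\mathbf 1_{\mathcal A_q}$ gives $\int f^2\ge\bigl(\int f\bigr)^2=\Psi(Q)^2$. Hence the entire content of the theorem is the matching upper bound, and the diagonal contributes only $\sum_{q\le Q}\lambda(\mathcal A_q)=\Psi(Q)$, which is $O\bigl(\Psi(Q)^2/(\log\Psi(Q))^C\bigr)$ once $\Psi(Q)\ge 2$. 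Writing $m_q:=\lambda(\mathcal A_q)=2\varphi(q)\psi(q)/q$ and using $\sum_{q\ne r}m_qm_r\le\Psi(Q)^2$, it remains to show that the \emph{total excess overlap}
$$
\mathcal E(Q):=\sum_{\substack{q,r\le Q\\ q\ne r}}\bigl(\lambda(\mathcal A_q\cap\mathcal A_r)-m_qm_r\bigr)_+
$$
is $O\bigl(\Psi(Q)^2/(\log\Psi(Q))^C\bigr)$.

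\smallskip

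The first ingredient is a refined overlap estimate of Pollington--Vaughan type, obtained by running the underlying sieve carefully enough to pin down the leading constant: for $q\ne r$ with $d=\gcd(q,r)$ one should have
$$
\lambda(\mathcal A_q\cap\mathcal A_r)\ \le\ m_qm_r\,\bigl(1+\delta(q,r)\bigr)\ +\ (\text{small additive error}),
$$
where the defect $\delta(q,r)\ge 0$ is controlled by the small prime factors of $qr/d^2$ below the relevant overlap scale $\Delta(q,r)$ (itself determined by $\psi(q),\psi(r)$ and $q/d,r/d$); concretely $\delta(q,r)\le\prod_{p\mid qr/d^2,\,p\le\Delta(q,r)}(1-1/p)^{-1}-1$, so $\delta(q,r)$ vanishes unless $qr/d^2$ has a prime factor below $\Delta(q,r)$, and it is never larger than $O(\log\log(qr))$. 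In particular $\delta(q,r)$ is genuinely small unless $q$ or $r$ carries an atypically rich family of small primes not dividing the other, with $d$ in the corresponding critical window. Making the leading constant exactly $1$ rather than $O(1)$, and keeping the additive error term below $m_qm_r(\log\Psi(Q))^{-2C}$ after excising a sparse set of pairs to be handled crudely, is what forces the use of sieve machinery here.

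\smallskip

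The second ingredient is the GCD-graph apparatus of Koukoulopoulos and Maynard~\cite{km}. Running their decomposition of the pair sum into dyadic classes --- according to the sizes of $m_q,m_r,q,r,d$ and the parameters entering $\Delta(q,r)$ --- and their quality-increment iteration on each class, but now tracking the leading constant and the size of $\delta(q,r)$ throughout, one should obtain that each class contributes at most $\bigl(1+O((\log\Psi(Q))^{-C'})\bigr)$ times the corresponding product of vertex masses, for a suitably large $C'$, \emph{plus} the contribution of the sub-configurations on which $\delta$ is large. Summing these per-class bounds exactly as in the derivation of \eqref{KMtechnical} then controls the ``main'' part of $\mathcal E(Q)$ by $O\bigl(\Psi(Q)^2(\log\Psi(Q))^{-C'}\bigr)$.

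\smallskip

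The hard part --- and the reason a \emph{quantitative} anatomy-of-integers input is needed --- is the contribution of pairs on which $\delta(q,r)$ is large, say $\delta(q,r)\asymp L$. The obstruction is the very one that forces the whole apparatus of~\cite{km}: $\psi$ is completely arbitrary and may be supported entirely on integers with very many small prime factors --- the mechanism behind the Duffin--Schaeffer counterexamples --- so one cannot simply argue that such moduli are rare. Instead, one observes that an edge with $\delta\asymp L$ must carry a prescribed family of primes in the critical interval dividing one endpoint but not the other, and then uses the GCD-graph structure together with a sharp ``few integers have many prime factors in a given interval'' estimate to show that, after optimising over the dyadic parameters, the $\psi$-mass of the edges of defect $\asymp L$ incident to a typical vertex is smaller than in the independence model by a factor beating $L\,(\log\Psi(Q))^C$. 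Summing the resulting geometric series in $L$ then bounds this contribution by $O\bigl(\Psi(Q)^2/(\log\Psi(Q))^C\bigr)$, which together with the previous step gives $\mathcal E(Q)=O\bigl(\Psi(Q)^2/(\log\Psi(Q))^C\bigr)$, as required. Pushing this trade-off all the way --- upgrading the $O(1)$ saving of~\cite{km} to an arbitrary power of $\log\Psi(Q)$, uniformly in $\psi$ --- is the technical crux, and is where the refined sieve estimate and the strengthened anatomy-of-integers bound have to be made to work in tandem.
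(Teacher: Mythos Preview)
Your high-level strategy --- a Pollington--Vaughan overlap estimate sharpened to leading constant $1$, GCD-graph machinery to control the bad pairs, and an anatomy-of-integers input to handle pairs with large defect --- is the paper's approach. But the plan contains one concrete error and does not isolate the step that actually makes the argument close.

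The error is in the direction of the prime range in your overlap estimate. The refined bound (the paper's Lemma~\ref{lemma_over}) has the form
\[
\lambda(\mathcal A_q\cap\mathcal A_r)\ \le\ m_qm_r\,\bigl(1+O((\log D)^{-C})\bigr)\prod_{\substack{p\mid qr/d^2\\ p>A}}\Bigl(1+\frac{1}{p-1}\Bigr),
\]
with $D=D(q,r)$ and $A\approx\exp\bigl((\log D)(\log\log\log D)/(8C\log\log D)\bigr)$: the extra product is over primes \emph{above} a threshold, not below. Your formula $\prod_{p\le\Delta}(1-1/p)^{-1}-1$ has the range inverted, and the statement ``$\delta(q,r)$ vanishes unless $qr/d^2$ has a prime factor below $\Delta$'' is the wrong way round. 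Your phenomenological description is correct (the product is large when $D$ is small and $qr/d^2$ has many moderate primes), but the inverted formula would not interface correctly with what follows.

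More seriously, the plan does not name the actual technical crux. The paper does not run a dyadic sum over defect sizes $L$; it partitions pairs according to whether $D(q,r)$ is below or above $\Psi(Q)/(\log\Psi(Q))^C$ and, in the latter regime, whether $L_{A(D)}(q,r)=\sum_{p\mid qr/d^2,\,p>A(D)}1/p$ is small (the only case giving the main term $\Psi(Q)^2$) or large. Everything else reduces to two second-moment bounds (Propositions~\ref{prop_secondmoment1} and~\ref{prop_secondmoment2}), and those are where the new work lies. The key lemma absent from \cite{km} and from your sketch is a \emph{quality-gain versus density-loss} statement (Lemma~\ref{quality_density_lemma}): iterating the GCD-graph step, one reaches a subgraph that either has quality $\ge t^3q(G)$, or has lost at most a factor $F(t)^{1/4}$ in density and has $|\mathcal P_{\mathrm{diff}}|\le\log t$. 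The second alternative is what lets a nontrivial piece of the $L$-sum survive passage to the subgraph, after which the anatomy input (Lemma~\ref{lemma_anatomy}, with the cutoff $c$ allowed to tend to $0$) finishes. This balancing is tuned exactly to the threshold $A$ that the sieve error in the overlap estimate forces, and without making it explicit one cannot see why the saving is an arbitrary power of $\log\Psi(Q)$ rather than merely $O(1)$. Your plan gestures at ``tracking the leading constant and the size of $\delta$'' through the KM iteration, but that is not what happens: the iteration is run once on a fixed edge set, and the dichotomy above is what replaces the missing independence.
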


The rest of this paper is organized as follows. In Section~\ref{sec_2}, we show how Theorem~\ref{th2} implies Theorem~\ref{th1}. The following seven sections are concerned with the proof of Theorem~\ref{th2}. Section~\ref{sec_3} contains an estimate for the measure of the overlap $\mathcal{A}_q \cap \mathcal{A}_r$ for given $q$ and $r$. This estimate exploits information on the divisor structure of $q$ and $r$ in order to bound the difference between $\lambda \left(\mathcal{A}_q \cap \mathcal{A}_r\right)$ and $\lambda (\mathcal{A}_q) \lambda(\mathcal{A}_r)$, thus addressing the issue of the ``stochastic dependence'' between $\mathcal{A}_q$ and $\mathcal{A}_r$. In Section~\ref{sec_4}, we reduce Theorem~\ref{th2} to two second moment bounds. Section~\ref{sec_5} contains a brief introduction to the ``GCD graph'' machinery developed by Koukoulopoulos and Maynard~\cite{km}. In Section~\ref{sec_6}, we show how the second moment bounds follow from the existence of a ``good'' GCD subgraph. In the final two sections, we establish the existence of such a good GCD subgraph, using a modification of the iteration procedure of~\cite{km}. Our argument requires a careful balancing of the ``quality gain'' against the potential ``density loss'' coming from this iterative procedure, in such a way that information on the ``anatomy of integers'' can be exploited beyond a certain threshold. This threshold is determined by the order of the error terms coming from sieve theory (which translate into the error terms of the overlap estimate in Section~\ref{sec_3}).

For the rest of the paper, $\psi: \mathbb{N} \to [0,1/2]$ is an arbitrary function, $\mathcal{A}_q$, $q \in \mathbb{N}$, is as in \eqref{eq_def}, and $\Psi(Q)$, $Q \in \mathbb{N}$, is as in \eqref{psiq_def}.

\section{Proof of Theorem 1} \label{sec_2}

Let $C>4$ be fixed, and assume that Theorem~\ref{th2} holds. Let $\mathbbm{1}_A$ denote the indicator function of a set $A$. Formulated in probabilistic language, Theorem~\ref{th2} controls the variance of the random variables $\mathbbm{1}_{\mathcal{A}_1}, \dots, \mathbbm{1}_{\mathcal{A}_Q}$, and we obtain
\begin{eqnarray} \label{vari}
\int_0^1 \left(\sum_{q=1}^Q \mathbbm{1}_{\mathcal{A}_q} (\alpha) - \Psi(Q) \right)^2 \mathrm{d}\alpha
& = & \sum_{q,r \leq Q} \lambda( \mathcal{A}_q \cap \mathcal{A}_r) - \Psi(Q)^2 = O \left(\frac{\Psi(Q)^2}{(\log \Psi(Q))^{C}} \right).
\end{eqnarray}
Define 
$$
Q_k = \min  \left \{Q:~\Psi(Q) \geq e^{k^{1/\sqrt{C}}} \right \}, \qquad k \geq 1,
$$
and let 
$$
\mathcal{B}_k = \left\{ \alpha \in [0,1]:~\left| \sum_{q=1}^{Q_k} \mathbbm{1}_{\mathcal{A}_q} (\alpha) - \Psi(Q_k) \right| \geq \frac{\Psi(Q_k)}{(\log \Psi(Q_k))^{C/4}} \right\}.
$$
By Chebyshev's inequality and \eqref{vari}, we have
$$
\lambda\left( \mathcal{B}_k \right) \ll (\log \Psi(Q_k))^{-C/2} \ll k^{-\sqrt{C}/2}.
$$
Since we assumed that $C>4$, we have $\sum_{k=1}^\infty \lambda(\mathcal{B}_k) < \infty$, and the Borel--Cantelli lemma implies that almost all $\alpha$ are contained in at most finitely many sets $\mathcal{B}_k$. Thus for almost all $\alpha$,
$$
\left| \sum_{q=1}^{Q_k} \mathbbm{1}_{\mathcal{A}_q} (\alpha) - \Psi(Q_k) \right| \leq \frac{\Psi(Q_k)}{(\log \Psi(Q_k))^{C/4}}
$$
holds for all $k \geq k_0(\alpha)$. Clearly, for any $Q \geq 3$ there exists a $k$ such that $Q_k \leq Q < Q_{k+1}$, which also implies that
$$
\sum_{q=1}^{Q_k} \mathbbm{1}_{\mathcal{A}_q} (\alpha)  \leq \sum_{q=1}^{Q} \mathbbm{1}_{\mathcal{A}_q} (\alpha) \leq \sum_{q=1}^{Q_{k+1}} \mathbbm{1}_{\mathcal{A}_q} (\alpha).
$$
Since $\psi \leq 1/2$ by assumption, we have $\Psi(Q_k) \in \left[e^{k^{1/\sqrt{C}}},e^{k^{1/\sqrt{C}}} + 1/2\right]$, and so
$$
\Psi(Q_{k+1}) / \Psi(Q_k) = 1 + O \left(k^{-1+1/\sqrt{C}}\right) = 1 + O \left( \left( \log \Psi(Q_k) \right)^{-\sqrt{C}+1} \right).
$$
From the previous three formulas and the triangle inequality, we deduce that for almost all $\alpha$ there exists a $Q_0 = Q_0(\alpha)$ such that for all $Q \geq Q_0$,
$$
\left| \sum_{q=1}^{Q} \mathbbm{1}_{\mathcal{A}_q} (\alpha) - \Psi(Q) \right| = O \left(  \frac{\Psi(Q)}{(\log \Psi(Q))^{\sqrt{C}-1}} \right).
$$
As $C$ can be chosen arbitrarily large, this proves Theorem~\ref{th1}.

\section{The overlap estimate} \label{sec_3}

In this section, we develop a new estimate for the measure of the overlaps $\mathcal{A}_q \cap \mathcal{A}_r$. For the rest of the paper, let
\begin{equation}\label{D_def}
D(q,r):= \frac{\max \left(  r \psi(q), q \psi(r) \right)}{\gcd (q,r)}, \qquad q,r \in \mathbb{N}.
\end{equation}
The standard bound for the measure of $\mathcal{A}_q \cap \mathcal{A}_r$ is due to Pollington and Vaughan~\cite{pv}: for any $q \neq r$,
\begin{equation} \label{pv*}
\lambda(\mathcal{A}_q \cap \mathcal{A}_r) \ll \lambda(\mathcal{A}_q) \lambda(\mathcal{A}_r) \prod_{\substack{p \mid \frac{qr}{\gcd(q,r)^2}, \\ p>D(q,r)}} \left(1 + \frac{1}{p} \right),
\end{equation}
with an absolute implied constant. Clearly, because of the presence of the implied constant this standard bound cannot be sufficient to deduce Theorem~\ref{th2}. Below we will use a more refined argument from sieve theory which allows us to isolate a main term, and prove an upper bound of the form 
$$
\lambda(\mathcal{A}_q \cap \mathcal{A}_r) \leq \lambda(\mathcal{A}_q) \lambda(\mathcal{A}_r) \left(1 + \textup{[error]}\right),
$$
with an error term that becomes small if there are not too many small primes which divide $q$ and $r$ with different multiplicities (see Lemma~\ref{lemma_over} below for details). 

The following lemma is called the fundamental lemma of sieve theory. We state it in the formulation of~\cite[Theorem 18.11]{kouk}.
\begin{lemma}[Fundamental lemma of sieve theory] \label{lemma_fund}
	Let $(a_n)_{n \geq 1}$ be non-negative reals, such that $\sum_{n=1}^\infty a_n < \infty$. Let $\mathcal{P}$ be a finite set of primes, and write $P = \prod_{p \in \mathcal{P}} p$. Set $y = \max \mathcal{P}$, and $A_d = \sum_{n \equiv 0 \mod d} a_n$. Assume that there exists a multiplicative function $g$ such that $g(p) < p$ for all $p \in \mathcal{P}$, a real number $x$, and positive constants $\kappa,C$ such that
	$$
	A_d =: x \frac{g(d)}{d} + r_d, \qquad d \mid P,
	$$
	and
	$$
	\prod_{p \in (y_1, y_2] \cap \mathcal{P}} \left( 1 - \frac{g(p)}{p} \right)^{-1} < \left( \frac{\log y_2}{\log y_1} \right)^\kappa \left(1 + \frac{C}{\log y_1} \right), \qquad 3/2 \leq y_1 \leq y_2 \leq y.
	$$
	Then, uniformly in $u \geq 1$ we have
	$$
	\sum_{(n,P)=1} a_n = \left( 1 + O ( u^{-u/2} ) \right) x \prod_{p \in \mathcal{P}} \left(1  -\frac{g(p)}{p} \right) + O \left( \sum_{d \leq y^u,~d \mid P} |r_d| \right).
	$$
\end{lemma}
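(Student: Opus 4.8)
This is the fundamental lemma of the Brun/combinatorial sieve, so in the paper one would simply cite~\cite[Theorem 18.11]{kouk}; should one wish to reprove it, here is the route I would take. The plan is to replace the exact sieve identity $\mathbbm{1}_{(n,P)=1}=\sum_{d\mid\gcd(n,P)}\mu(d)$ by truncated combinatorial sieve weights. First I would list the primes of $\mathcal{P}$ in decreasing order and, to a squarefree $d=q_1\cdots q_k\mid P$ with $q_1>\cdots>q_k$, attach weights $\theta_d^{\pm}$ equal to $\mu(d)$ unless the truncation fires --- that is, $\theta_d^{\pm}=\mu(d)$ exactly when the partial products satisfy $q_1\cdots q_{m-1}\,q_m^{\beta}<D:=y^{u}$ for every $m\le k$ of the relevant parity (odd for the upper sieve $+$, even for the lower sieve $-$), with $\beta$ a constant chosen in terms of $\kappa$, and $\theta_d^{\pm}=0$ otherwise. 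Brun's combinatorial inequality then gives $\sum_{d\mid m}\theta_d^{-}\le\mathbbm{1}_{m=1}\le\sum_{d\mid m}\theta_d^{+}$ for all squarefree $m\mid P$, while by construction $\theta_d^{\pm}$ is supported on $d\le D$ with $|\theta_d^{\pm}|\le1$. Applying this with $m=\gcd(n,P)$, multiplying by $a_n\ge0$ and summing over $n$ turns the identity into
$$
\sum_{d\mid P}\theta_d^{-}A_d\ \le\ \sum_{(n,P)=1}a_n\ \le\ \sum_{d\mid P}\theta_d^{+}A_d .
$$
Substituting $A_d=x\,g(d)/d+r_d$ and using $|\theta_d^{\pm}|\le1$ together with the support restriction, the $r_d$-contribution to either bound is $O\big(\sum_{d\le y^{u},\,d\mid P}|r_d|\big)$, which is exactly the claimed error term.

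What then remains is to show that both main terms $x\sum_{d\mid P}\theta_d^{\pm}g(d)/d$ equal $x\,V(y)\,(1+O(u^{-u/2}))$, where $V(y)=\prod_{p\in\mathcal{P}}(1-g(p)/p)$. Here I would use that the full sum is exact, $\sum_{d\mid P}\mu(d)g(d)/d=V(y)$, so that the difference from $V(y)$ is, up to sign, $\sum_{d:\,\theta_d^{\pm}\neq\mu(d)}\mu(d)g(d)/d$ --- a sum over the $d$ that were truncated, each of which has some partial product $q_1\cdots q_{m-1}q_m^{\beta}\ge D$. The hard part is to bound this sum: I would iterate the Buchstab identity, peeling off primes from largest to smallest, and at each parity level control the newly discarded mass by a Rankin-type estimate fed by the sieve-dimension hypothesis (which delivers $V(y)^{-1}\ll_\kappa(\log y)^{\kappa}$ and the needed bounds on partial Euler products of $g$). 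Each additional level of truncation is pushed onto divisors that only fit below $D=y^{u}$ once they carry very many small prime factors, so the discarded mass decays rapidly in $u$, and summing the resulting series produces the factor $1+O(u^{-u/2})$. Plugging these asymptotics back into the two displayed inequalities proves the lemma, uniformly in $u\ge1$.

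The main obstacle is exactly the combinatorial and bookkeeping work hidden in the last step: verifying Brun's sign inequality for the truncated weights, choosing $\beta$ correctly, and running the Buchstab iteration carefully enough to extract the rate $u^{-u/2}$ (rather than something weaker) under only the stated one-sided dimension condition on $g$. All of this is classical, and since nothing downstream in the paper needs more than~\cite[Theorem 18.11]{kouk}, in the actual write-up I would cite that result rather than reproduce the argument.
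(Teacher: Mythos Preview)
Your proposal is correct and matches the paper exactly: the paper does not prove this lemma at all but simply cites it as \cite[Theorem 18.11]{kouk}, precisely as you suggest. Your supplementary sketch of the beta-sieve proof is accurate in outline, but goes beyond what the paper does.
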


We will also need an estimate for the order of the partial sums of a particular multiplicative function.
\begin{lemma} \label{lemma_mean_value}
	Let $\mathcal{P}$ be a set of odd primes, and define
	$$
	f(n) = \prod_{\substack{p \mid n,\\ p \in \mathcal{P}}} \left(1+ \frac{1}{p-2} \right). 
	$$
	Then for any $x \ge 2$,
	$$
	\sum_{n \leq x} f(n) = x \prod_{p \in \mathcal{P}} \left(1 + \frac{1}{p(p-2)} \right) + O \left(  \log x \right),
	$$
	where the implied constant is absolute.
\end{lemma}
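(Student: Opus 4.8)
The plan is to write $f$ as a Dirichlet convolution with an explicit ``small'' arithmetic function, perform the resulting summation over the hyperbola, and estimate the error by Rankin's trick together with the sharp form of Mertens' theorem.

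First I would set $g = \mu * f$, so that $g$ is multiplicative and $f(n) = \sum_{d \mid n} g(d)$. Reading off the values of $f$ at prime powers, one finds $g(p) = \tfrac{1}{p-2}$ for $p \in \mathcal{P}$, while $g(p^k) = 0$ whenever $k \ge 2$, and $g(p^k) = 0$ for all $k \ge 1$ when $p \notin \mathcal{P}$. Thus $g$ is supported on squarefree integers all of whose prime factors lie in $\mathcal{P}$, and there $g(d) = \prod_{p \mid d} \tfrac{1}{p-2} \ge 0$, since every prime in $\mathcal{P}$ is odd and hence $p - 2 \ge 1$. Moreover
$$
\sum_{d=1}^{\infty} \frac{g(d)}{d} = \prod_{p \in \mathcal{P}} \Bigl( 1 + \frac{g(p)}{p} \Bigr) = \prod_{p \in \mathcal{P}} \Bigl( 1 + \frac{1}{p(p-2)} \Bigr),
$$
a convergent product, which is precisely the main term asserted in the lemma.

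Next, $\sum_{n \le x} f(n) = \sum_{d \le x} g(d) \lfloor x/d \rfloor = x \sum_{d \le x} \tfrac{g(d)}{d} + O\bigl( \sum_{d \le x} g(d) \bigr)$, using $|\lfloor x/d \rfloor - x/d| \le 1$ and $g \ge 0$; and $x \sum_{d \le x} \tfrac{g(d)}{d} = x \prod_{p \in \mathcal{P}}\bigl( 1 + \tfrac{1}{p(p-2)} \bigr) - x \sum_{d > x} \tfrac{g(d)}{d}$. Hence it suffices to prove, with absolute implied constants,
$$
\sum_{d \le x} g(d) \ll \log x \qquad \text{and} \qquad x \sum_{d > x} \frac{g(d)}{d} \ll \log x .
$$
For $x$ in any fixed bounded range this is immediate from the trivial bounds on the finitely many values $f(n)$, $n \le x$, together with $\prod_{p \in \mathcal{P}}(1 + \tfrac{1}{p(p-2)}) \le \prod_{p \ge 3}(1 + \tfrac{1}{p(p-2)}) < \infty$, so I may assume $x$ large.

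Both estimates reduce to one Euler-product bound. Put $\sigma = 1/\log x$ and
$$
P(x) := \sum_{d=1}^{\infty} \frac{g(d)}{d^{\sigma}} = \prod_{p \in \mathcal{P}} \Bigl( 1 + \frac{1}{(p-2)\,p^{\sigma}} \Bigr) \le \exp\Bigl( \sum_{p \ge 3} \frac{1}{(p-2)\,p^{\sigma}} \Bigr) .
$$
Writing $\tfrac{1}{p-2} = \tfrac{1}{p} + \tfrac{2}{p(p-2)}$, the exponent equals $\sum_{p} p^{-1-\sigma} + O(1)$, and since $\sum_{p} p^{-s} = \log \tfrac{1}{s-1} + O(1)$ as $s \to 1^{+}$ — a standard consequence of $\log \zeta(s) = \sum_{p} p^{-s} + O(1)$ and $\zeta(s) \sim \tfrac{1}{s-1}$ — we get $\sum_{p} p^{-1-\sigma} = \log\log x + O(1)$, whence $P(x) \ll \log x$. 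Rankin's trick now gives $\sum_{d \le x} g(d) \le x^{\sigma} \sum_{d=1}^{\infty} g(d)\, d^{-\sigma} = e\,P(x) \ll \log x$, and, applied to the tail with the admissible exponent $1 - \sigma \in (0,1)$,
$$
\sum_{d > x} \frac{g(d)}{d} \le x^{-(1-\sigma)} \sum_{d=1}^{\infty} \frac{g(d)}{d^{\sigma}} = \frac{e}{x}\,P(x) \ll \frac{\log x}{x},
$$
so $x \sum_{d > x} g(d)/d \ll \log x$; here $P(x)$ converges because $\sum_{p} g(p)\, p^{-\sigma} \ll \sum_{p} p^{-1-\sigma} < \infty$. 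Combining the displays proves the lemma. The one genuinely delicate point — and the step I expect to be the main obstacle — is keeping the error at $O(\log x)$ rather than a larger power of $\log x$: a crude estimate such as $\tfrac{1}{p-2} \le \tfrac{3}{p}$ would only yield $\sum_{d \le x} g(d) \ll (\log x)^{3}$, so one really needs the Mertens-level asymptotics for $\sum_{p} p^{-s}$, which force the $\sigma$-shifted Euler product to be $\le \exp(\log\log x + O(1))$.
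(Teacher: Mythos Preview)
Your proof is correct and follows the same overall architecture as the paper's: set $g=\mu * f$, compute $g$ on prime powers, extract the main term as the Euler product $\sum_d g(d)/d$, and reduce to bounding $\sum_{d\le x} g(d)$ and $x\sum_{d>x} g(d)/d$. The only substantive difference is in how the error terms are handled. You use Rankin's trick at $\sigma=1/\log x$ together with the asymptotic $\sum_p p^{-1-\sigma}=\log\log x+O(1)$ to get $P(x)\ll\log x$, which yields both error terms of size $O(\log x)$. The paper instead observes that $p\,g(p)=p/(p-2)\le 3$ is bounded, invokes a general mean-value theorem for multiplicative functions to obtain the stronger estimate $\sum_{d\le x} d\,g(d)\ll x$, and then deduces $x\sum_{d>x} g(d)/d\ll 1$ (sharper than your $\ll\log x$) and $\sum_{d\le x} g(d)\ll\log x$ by dyadic decomposition. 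Your route is arguably more self-contained, relying only on the behaviour of $\zeta(s)$ near $s=1$, while the paper's route gives a slightly cleaner tail bound at the cost of citing an external result; either way the final error is $O(\log x)$.
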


\begin{proof}
	Define $g(n) = \sum_{d \mid n} \mu(d) f(n/d)$, where $\mu$ is the M\"obius function. Note that $f$ and $g$ are multiplicative functions. We have
	\begin{equation} \label{sumfnformula}
	\begin{split} \sum_{n \leq x} f(n) &= \sum_{n \leq x} \sum_{d \mid n} g(d) \\
		&= \sum_{d \leq x} g(d) \left \lfloor \frac{x}{d} \right\rfloor \\
		&= x \sum_{d \leq x} \frac{g(d)}{d} + O \left( \sum_{d \leq x} g(d) \right) \\
		&= x \sum_{d=1}^\infty \frac{g(d)}{d} + O \left(x \sum_{d>x} \frac{g(d)}{d} +  \sum_{d \leq x} g(d) \right). \end{split}
	\end{equation}
	For $p \in \mathcal{P}$,
	\begin{equation*} 
	g(p) = f(p) - 1 = \frac{1}{p-2}, \qquad \text{and} \qquad g(p^m) = 0,~m \geq 2, 
	\end{equation*}
	whereas for $p \not\in \mathcal{P}$, we have $g(p^m)=0$ for all $m \geq 1$. Thus
	$$
	\sum_{d=1}^\infty \frac{g(d)}{d} = \prod_{p \in \mathcal{P}} \left(1 + \frac{1}{p(p-2)} \right),
	$$
	and it remains to estimate the error term in \eqref{sumfnformula}.
	
	Note that $p^m g(p^m) \le p/(p-2) \le 3$ for all prime powers $p^m$. Hence by a general upper bound for the order of partial sums of multiplicative functions (see e.g.\ \cite[Theorem 14.2]{kouk}), the partial sums of $dg(d)$ satisfy
	$$
	\sum_{d \le x} d g(d) \ll x \exp \left( \sum_{p \le x} \frac{pg(p)-1}{p} \right) \ll x \exp \left( \sum_{p>2} \frac{2}{p(p-2)} \right) \ll x.
	$$
	In particular, $\sum_{x \le d \le 2x} g(d)/d \ll x^{-2} \sum_{d \le 2x} d g(d) \ll x^{-1}$, and the first error term in \eqref{sumfnformula} is $x\sum_{d>x} g(d)/d \ll 1$. Further, $\sum_{x \le d \le 2x} g(d) \le x^{-1} \sum_{d \le 2x} d g(d) \ll 1$, and the second error term in \eqref{sumfnformula} is $\sum_{d \le x} g(d) \ll \log x$, as claimed. All implied constants are absolute.
\end{proof}

\begin{lemma}[Overlap estimate] \label{lemma_over}
	For any positive integers $q \neq r$ and any reals $u \ge 1$ and $T \ge 2$, we have
	\begin{equation} \label{overlapclaim}
	\lambda (\mathcal{A}_q \cap \mathcal{A}_r) \le \lambda (\mathcal{A}_q) \lambda (\mathcal{A}_r) \left( 1 + O \left( u^{-u/2} + \frac{T^u \log (D+2) \log T}{D} \right) \right) \prod_{\substack{p \mid \frac{qr}{\gcd(q,r)^2}, \\ p>T}} \left( 1+ \frac{1}{p-1} \right)
	\end{equation}
	with an absolute implied constant, where $D=D(q,r)$ is as in \eqref{D_def}. In particular, for any $C \ge 1$,
	$$
	\lambda (\mathcal{A}_q \cap \mathcal{A}_r) \le \lambda (\mathcal{A}_q) \lambda (\mathcal{A}_r) \left( 1 + O \left( (\log (D+2))^{-C} \right) \right) \prod_{\substack{p \mid \frac{qr}{\gcd(q,r)^2}, \\ p>A}} \left( 1+ \frac{1}{p-1} \right)
	$$
	with an implied constant depending only on $C$, where
	\begin{equation}\label{A_def}
	A=A_C(q,r) := \exp \left( \frac{\log (D+100) \log \log \log (D+100)}{8C \log \log (D+100)} +1 \right) .
	\end{equation}
\end{lemma}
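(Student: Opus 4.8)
The plan is to follow the Pollington--Vaughan reduction of $\lambda(\mathcal{A}_q\cap\mathcal{A}_r)$ to a weighted count of residues, and then, instead of their bound (which only controls the sum up to an unspecified constant), to extract a genuine main term by running this count through the fundamental lemma of sieve theory. First I would dispose of trivialities --- if $\psi(q)\psi(r)=0$ then $\mathcal{A}_q$ or $\mathcal{A}_r$ is empty --- and, using the symmetry of the claim in $q,r$, assume $r\psi(q)\le q\psi(r)$, so that $D=q\psi(r)/g$ with $g=\gcd(q,r)$. Writing $q=gq_0$, $r=gr_0$, $\gcd(q_0,r_0)=1$, one has $qr/\gcd(q,r)^2=q_0r_0$ and $\lcm(q,r)=gq_0r_0$. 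Since $\psi(r)\le 1/2$ the intervals defining $\mathcal{A}_r$ (and those defining $\mathcal{A}_q$) are pairwise disjoint, so $\lambda(\mathcal{A}_q\cap\mathcal{A}_r)=\sum_{\gcd(a,q)=1}\sum_{\gcd(b,r)=1}\lambda(I_a\cap J_b)$, and $\lambda(I_a\cap J_b)$ depends only on $a/q-b/r=(ar_0-bq_0)/\lcm(q,r)$. Calling this quantity $W(m)$ when $ar_0-bq_0\equiv m\mods{\lcm(q,r)}$, it is a piecewise-linear ``tent'' supported on the residues of absolute value at most $\Delta:=r_0\psi(q)+q_0\psi(r)\in[D,2D]$, with $\max_m W(m)=2\psi(q)/q$ and $\sum_m W(m)=\frac{4\psi(q)\psi(r)}{g}(1+O(1/D))$. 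Hence $\lambda(\mathcal{A}_q\cap\mathcal{A}_r)=\sum_m\rho(m)W(m)$, where $\rho(m)$ counts reduced pairs $(a\bmod q,\,b\bmod r)$ with $ar_0-bq_0\equiv m$.

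The crucial structural step is a prime-by-prime analysis of $\rho$: it is multiplicative, it vanishes unless $\gcd(m,q_0r_0)=1$ (each prime $p$ with $v_p(q)\neq v_p(r)$ forces $p\nmid m$), and, up to a harmless adjustment at $p=2$,
$$
\rho(m)=K\cdot\mathbbm{1}[\gcd(m,q_0r_0)=1]\cdot f_*(m),\qquad f_*(m)=\prod_{\substack{p\mid m,\ p\ge 3\\ v_p(q)=v_p(r)\ge 1}}\Bigl(1+\frac{1}{p-2}\Bigr),
$$
with $K$ an explicit product of Euler-type factors over the primes dividing $g$. From $\sum_m\rho(m)=\varphi(q)\varphi(r)$ one gets the identity $KC_*\prod_{p\mid q_0r_0}(1-1/p)=\varphi(q)\varphi(r)/\lcm(q,r)$, where $C_*=\prod_{p\ge3,\,v_p(q)=v_p(r)\ge1}(1+\tfrac1{p(p-2)})$; consequently $\lambda(\mathcal{A}_q)\lambda(\mathcal{A}_r)$ is exactly the value $\sum_m\rho(m)W(m)$ would take if $ar_0-bq_0$ equidistributed modulo $\lcm(q,r)$, and the whole problem reduces to bounding how much the coprimality-restricted weight $f_*(m)W(m)$ is concentrated near $m=0$.

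To estimate $\sum_{\gcd(m,q_0r_0)=1}f_*(m)W(m)$ I would sift the weight $f_*W$ by the primes $p\le T$ dividing $q_0r_0$ only: dropping the conditions for the remaining primes $p>T$ merely enlarges the (nonnegative) sum, at the cost of the factor $\prod_{p\mid q_0r_0,\,p>T}(1-1/p)^{-1}=\prod_{p\mid q_0r_0,\,p>T}(1+\frac1{p-1})$, which is precisely the product in \eqref{overlapclaim}. For the primes $\le T$ I apply Lemma~\ref{lemma_fund} with $\mathcal P=\{p\le T:p\mid q_0r_0\}$, sequence $a_m=f_*(m)W(m)$, and density function identically $1$: for squarefree $d$ built from $\mathcal P$ one has $f_*(dm')=f_*(m')$, and partial summation together with Lemma~\ref{lemma_mean_value} gives $\sum_{m\equiv 0(d)}f_*(m)W(m)=x/d+r_d$ with $x=C_*\sum_m W(m)$ and $r_d\ll\log(D+2)\cdot\max_m W(m)=\log(D+2)\cdot\psi(q)/q$, the $\log$-factor coming exactly from the error term of Lemma~\ref{lemma_mean_value}. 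The dimension hypothesis of Lemma~\ref{lemma_fund} holds with $\kappa=1$ and an absolute constant, by Mertens' theorem, $\mathcal P$ being a set of primes. Since there are at most $T^u$ admissible moduli $d$, the fundamental lemma yields
$$
\sum_{\gcd(m,q_0r_0)=1}f_*(m)W(m)\ \le\ \bigl(1+O(u^{-u/2})\bigr)\,x\prod_{\substack{p\mid q_0r_0\\ p\le T}}\Bigl(1-\frac1p\Bigr)\ +\ O\Bigl(T^u\log(D+2)\,\frac{\psi(q)}{q}\Bigr).
$$
Multiplying by $K$, splitting $\prod_{p\le T,\,p\mid q_0r_0}(1-\tfrac1p)=\prod_{p\mid q_0r_0}(1-\tfrac1p)\prod_{p>T,\,p\mid q_0r_0}(1+\tfrac1{p-1})$, and invoking $Kx\prod_{p\mid q_0r_0}(1-1/p)=\lambda(\mathcal{A}_q)\lambda(\mathcal{A}_r)(1+O(1/D))$ from the identity above, one obtains \eqref{overlapclaim}: the relative error $u^{-u/2}+1/D$ comes from the main term, while the absolute error $K\cdot T^u\log(D+2)\psi(q)/q$ is recast as $\lambda(\mathcal{A}_q)\lambda(\mathcal{A}_r)\cdot O\bigl(\tfrac{T^u\log(D+2)\log T}{D}\bigr)$ by routine estimates for the totient-type factors (bounding $K\le\varphi(g)$, using $\prod_{p\le T}(1-1/p)^{-1}\ll\log T$, and $D=q_0\psi(r)$).

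For the ``in particular'' statement, when $D$ is bounded one may simply enlarge the implied constant (taking $u=1$), so assume $D$ large; put $T=A=A_C(q,r)$, so that $\log A\asymp\frac{\log(D+100)\log\log\log(D+100)}{C\log\log(D+100)}$, and choose $u=\big\lfloor 4C\log\log(D+100)/\log\log\log(D+100)\big\rfloor$. Then $u\log A\le\tfrac12\log(D+100)$, so $T^u\le(D+100)^{1/2}$ and the second error term in \eqref{overlapclaim} is $\ll(D+2)^{-1/2}(\log(D+2))^{2}\ll(\log(D+2))^{-C}$, while $\tfrac u2\log u\ge C\log\log(D+100)$ gives $u^{-u/2}\le(\log(D+2))^{-C}$; and every prime in the product satisfies $p>A\ge T$. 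I expect the main obstacle, carrying essentially all the work, to be the prime-by-prime determination of $\rho$ and the sieve set-up of the third paragraph: pinning down $f_*$ (with $p-2$ in the denominator, reflecting that coprimality of \emph{both} $a$ and $b$ is being imposed) and the constant $K$, verifying the dimension-one hypothesis with an absolute constant via Lemma~\ref{lemma_mean_value}, and --- above all --- controlling the remainders $r_d$ with the sharp dependence on $D$ and $T$, since it is their size that forces the threshold \eqref{A_def}. The point that makes the scheme succeed is the extraction of the \emph{sharp} Euler factor $1+\frac1{p-1}$ for the large primes, rather than the $1+\frac1p$ with an undetermined constant in \eqref{pv*}.
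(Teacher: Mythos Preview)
Your proposal is correct and follows essentially the same route as the paper: Pollington--Vaughan reduction of $\lambda(\mathcal{A}_q\cap\mathcal{A}_r)$ to a weighted residue count, relaxation of the coprimality condition at primes $p>T$ (producing exactly the Euler product $\prod_{p>T}(1+\tfrac1{p-1})$), application of the fundamental lemma (Lemma~\ref{lemma_fund}) to the remaining $T$-smooth sieve with Lemma~\ref{lemma_mean_value} supplying the density and remainder input, and finally the same choice of $u$ and $T=A$ for the second claim. The only noticeable difference is cosmetic: the paper handles the tent weight $w$ by approximating it with step functions $w^*$ and passing to the limit $K\to\infty$, whereas you invoke partial summation directly; both accomplish the same thing. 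One small slip: your inequality $u\log A\le\tfrac12\log(D+100)$ overlooks the ``$+1$'' in the exponent defining $A$, so in fact $T^u\le(D+100)^{1/2}(\log(D+100))^{O(C)}$ as in the paper---but this extra polylogarithmic factor is harmless for the conclusion.
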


\begin{proof}
	We follow the general strategy of Pollington and Vaughan in~\cite[Section 3]{pv}. If $D<1/2$, then $\psi(q)/q+\psi(r)/r<1/\mathrm{lcm}(q,r)$, hence $\mathcal{A}_q \cap \mathcal{A}_r = \emptyset$, and the claim trivially holds. We may thus assume throughout the rest of the proof that $D \ge 1/2$.
	
	We set
	\begin{eqnarray*}
		\delta = \min \left( \frac{\psi(q)}{q}, \frac{\psi(r)}{r} \right) \qquad \text{and} \qquad \Delta = \max \left( \frac{\psi(q)}{q}, \frac{\psi(r)}{r} \right) ,
	\end{eqnarray*}
	and define the piecewise linear function
	\begin{equation*} 
	w(y) = \left\{ \begin{array}{ll} 2\delta & \text{if $0 \leq y \leq \Delta  - \delta$,} \\ \Delta+\delta-y & \text{if $\Delta - \delta < y \leq \Delta+\delta$,} \\ 0 & \text{otherwise.} \end{array}\right. 
	\end{equation*}
	We can express the measure of $\mathcal{A}_q \cap \mathcal{A}_r$ as
	$$
	\lambda(\mathcal{A}_q \cap \mathcal{A}_r) = \sum_{\substack{1 \leq a \leq q,\\ \gcd(a,q) = 1}}  \sum_{\substack{1 \leq b \leq r,\\ \gcd(b,r) = 1}} w \left(\left| \frac{a}{q} - \frac{b}{r} \right| \right).
	$$
	For any prime $p$, let $u = u(p,q)$ and $v = v(p,r)$ be defined by $q = \prod_p p^u$ and $r = \prod_p p^v$, and let
	$$
	l = \prod_{p:~u=v} p^u, \qquad m = \prod_{p:~u \neq v} p^{\min(u,v)}, \qquad n = \prod_{p:~u \neq v} p^{\max(u,v)}.
	$$
	Following the argument on p.\ 195 of~\cite{pv} (an application of the Chinese remainder theorem, together with a simple counting argument) leads to
	$$
	\sum_{\substack{1 \leq a \leq q,\\ \gcd(a,q) = 1}} \sum_{\substack{1 \leq b \leq r,\\ \gcd(b,r) = 1}} w \left( \left| \frac{a}{q} - \frac{b}{r} \right| \right) = \sum_{\substack{1 \leq c \leq ln,\\ \gcd(c,n) =1}} 2 w\left( \frac{c}{ln} \right) \varphi(m) l \prod_{p \mid \gcd(l,c)} \left(1 - \frac{1}{p} \right) \prod_{\substack{p \mid l, \\p \nmid c}} \left( 1 - \frac{2}{p} \right).
	$$
	
	Assume first that $l$ is odd. By rewriting the right-hand side of the previous formula we see that $\lambda(\mathcal{A}_q \cap \mathcal{A}_r)$ equals
	\[ \begin{split}  & 2 \varphi(m) \frac{\varphi(l)^2}{l} \sum_{\substack{1 \leq c \leq ln,\\ \gcd(c,n) =1}} w\left( \frac{c}{ln} \right)  \prod_{p \mid \gcd(l,c)} \left(1 - \frac{1}{p} \right)^{-1} \prod_{\substack{p \mid l, \\p \nmid c}} \left( \left( 1 - \frac{2}{p} \right) \left( 1 - \frac{1}{p} \right)^{-2} \right) \\ = & 2 \varphi(m) \frac{\varphi(l)^2}{l} \prod_{\substack{p \mid l}}  \left( 1 - \frac{1}{(p-1)^2} \right)  \sum_{\substack{1 \leq c \leq ln,\\ \gcd(c,n) =1}} w\left( \frac{c}{ln} \right) \prod_{\substack {p \mid \gcd(l,c)}} \left(1 + \frac{1}{p-2} \right) . \end{split} \]
	We now find an upper bound for this expression. First, we replace the condition $\gcd(c,n)=1$ by $\gcd(c,n^*)=1$, where $n^*$ denotes the $T$-smooth part of $n$ (i.e.\ $n^*=\prod_{p \le T,~u \neq v}p^{\max (u,v)}$). Next, we fix a large positive integer $K$, and divide $[\Delta-\delta, \Delta+\delta]$ into $K$ subintervals of equal length. Observe that the piecewise constant function
	$$
	w^*(y) = \frac{2 \delta}{K} \left( \left\lfloor \frac{K (\Delta+\delta-y)}{2 \delta} \right\rfloor +1 \right) = \frac{2 \delta}{K} \sum_{k=0}^{K-1} \mathbbm{1}_{[0,\Delta + \delta - 2k \delta /K]}(y)
	$$
	satisfies $w(y) \leq w^*(y)$ for all $y \ge 0$. Therefore $\lambda(\mathcal{A}_q \cap \mathcal{A}_r)$ is bounded above by
	\begin{equation} \label{intersectionupperbound}
	2 \varphi(m) \frac{\varphi(l)^2}{l} \prod_{\substack{p \mid l}}  \left( 1 - \frac{1}{(p-1)^2} \right) \sum_{\substack{1 \leq c \leq ln,\\ \gcd(c,n^*) =1}} w^*\left( \frac{c}{ln} \right) \prod_{\substack {p \mid \gcd(l,c)}} \left(1 + \frac{1}{p-2} \right) .
	\end{equation}
	Here
	$$
	\sum_{\substack{1 \leq c \leq ln,\\ \gcd(c,n^*) =1}} w^* \left( \frac{c}{ln} \right)  \prod_{\substack {p \mid \gcd(l,c)}} \left(1 + \frac{1}{p-2} \right) = \frac{2 \delta}{K} \sum_{k=0}^{K-1} \sum_{\substack{1 \leq c \leq ln (\Delta + \delta - 2k \delta/K),\\ \gcd(c,n^*) =1}} \prod_{\substack {p \mid \gcd(l,c)}} \left(1 + \frac{1}{p-2} \right).
	$$
	Now fix $k \in \{0,\dots,K-1\}$, and set
	$$
	a_c = \prod_{\substack {p \mid \gcd(l,c)}} \left(1 + \frac{1}{p-2} \right), \qquad 1 \leq c \leq ln (\Delta + \delta - 2k \delta/K),
	$$
	and $a_c = 0$ for $c > ln (\Delta + \delta - 2k \delta/K)$. Note that for $d \mid n^*$ we have $a_{dc} = a_c$ as long as $dc \leq ln (\Delta + \delta - 2k \delta/K)$. By Lemma~\ref{lemma_mean_value}, for any $d \mid n^*$ we thus have
	\begin{eqnarray*}
		\sum_{c \equiv 0 \mod d} a_c & = & \sum_{1 \leq c \leq \frac{ln (\Delta + \delta - 2k \delta/K)}{d}} ~\prod_{\substack {p \mid \gcd(l,c)}} \left(1 + \frac{1}{p-2} \right) \\
		& = & \frac{ln (\Delta + \delta - 2k \delta/K)}{d} \prod_{p \mid l} \left(1 + \frac{1}{p(p-2)} \right) + O(\log (D+2)) .
	\end{eqnarray*}
	We have 
	$$
	\sum_{\gcd(c,n^*)=1} a_c = \sum_{\substack{1 \leq c \leq ln (\Delta + \delta - 2k \delta/K),\\ \gcd(c,n^*) =1}} ~\prod_{\substack {p \mid \gcd(l,c)}} \left(1 + \frac{1}{p-2} \right),
	$$
	and by an application of Lemma~\ref{lemma_fund} (with $\mathcal{P}$ the set of prime divisors of $n^*$, $\max \mathcal{P} \le T$ and $|r_d| \ll \log (D+2)$) this is
	$$
	(1 + O (u^{-u/2})) ln \left( \Delta + \delta - \frac{2k \delta}{K} \right) \frac{\varphi(n^*)}{n^*} \prod_{p \mid l} \left(1 + \frac{1}{p(p-2)} \right) + O \left( T^u \log (D+2) \right) . $$
	Since
	$$
	\prod_{\substack{p \mid l}}  \left( 1 - \frac{1}{(p-1)^2} \right)  \prod_{p \mid l} \left(1 + \frac{1}{p(p-2)} \right) = 1,
	$$
	formula \eqref{intersectionupperbound} thus yields that $\lambda(\mathcal{A}_q \cap \mathcal{A}_r)$ is bounded above by 
	$$
    2 \varphi(m) \frac{\varphi(l)^2}{l} \cdot \frac{2 \delta}{K} \sum_{k=0}^{K-1} \left( \left( (1 + O (u^{-u/2})) ln \left( \Delta + \delta - \frac{2k \delta}{K} \right) \frac{\varphi(n^*)}{n^*} + O \left( T^u \log (D+2) \right) \right) \right) .
	$$
	Letting $K \to \infty$, and using $D = \Delta l n$ and $\varphi(n^*)/n^* \ge \prod_{p \le T} (1-1/p) \gg 1/\log T$, we obtain
	\[
	\begin{split} \lambda(\mathcal{A}_q \cap \mathcal{A}_r) &\le 2 \varphi(m) \frac{\varphi(l)^2}{l} 2 \delta \left( (1 + O (u^{-u/2})) ln \Delta \frac{\varphi(n^*)}{n^*} + O \left( T^u \log (D+2) \right) \right) \\ &= 4 \varphi(m) \varphi(l)^2 n \frac{\varphi(n^*)}{n^*} \delta \Delta \left( 1 + O \left( u^{-u/2} +  \frac{T^u \log (D+2) \log T}{D} \right) \right) \\ &= \lambda(\mathcal{A}_q) \lambda (\mathcal{A}_r) \frac{\varphi(n^*)/n^*}{\varphi(n)/n} \left( 1 + O \left( u^{-u/2} +  \frac{T^u \log (D+2) \log T}{D} \right) \right) . \end{split}
	\]
	Finally, observe that
	\[ \frac{\varphi(n^*)/n^*}{\varphi(n)/n} = \frac{1}{\prod_{\substack{p \mid n, \\ p>T}}\left( 1-\frac{1}{p} \right)} = \prod_{\substack{p \mid n, \\ p>T}} \left( 1+ \frac{1}{p-1} \right) . \]
	This establishes \eqref{overlapclaim} for odd $l$.
	
	Assume next that $l$ is even. Then
	\[ \prod_{p \mid \gcd(l,c)} \left(1 - \frac{1}{p} \right) \prod_{\substack{p \mid l, \\p \nmid c}} \left( 1 - \frac{2}{p} \right) = \frac{1}{2} \mathbbm{1}_{\{ 2 \mid c \}} \prod_{\substack{p \mid \gcd(l,c),\\ p>2}} \left(1 - \frac{1}{p} \right) \prod_{\substack{p \mid l, \\p \nmid c,\\p>2}} \left( 1 - \frac{2}{p} \right),
	\]
	and similarly to before we obtain that $\lambda (\mathcal{A}_q \cap \mathcal{A}_r)$ equals
	\[ \begin{split} & 4 \varphi (m) \frac{\varphi(l)^2}{l} \prod_{\substack{p \mid l, \\ p>2}} \left( 1-\frac{1}{p-1} \right) \sum_{\substack{1 \le c \le ln, \\ \gcd(c,n)=1, \\ 2 \mid c}} w \left( \frac{c}{ln} \right) \prod_{\substack{p \mid \gcd(l,c), \\ p>2}} \left( 1+\frac{1}{p-2} \right) \\ = & 4 \varphi (m) \frac{\varphi(l)^2}{l} \prod_{\substack{p \mid l, \\ p>2}} \left( 1-\frac{1}{p-1} \right) \sum_{\substack{1 \le c \le ln/2, \\ \gcd(c,n)=1}} w \left( \frac{c}{ln/2} \right) \prod_{\substack{p \mid \gcd(l,c), \\ p>2}} \left( 1+\frac{1}{p-2} \right) . \end{split}
	\]
	The rest of the proof for odd $l$ applies \emph{mutatis mutandis} to even $l$. This completes the proof of \eqref{overlapclaim}.
	
	Given $C \ge 1$, let us choose
	$$
	u=4C \frac{\log \log (D+100)}{\log \log \log (D+100)} \quad \text{and} \quad T=\exp \left( \frac{\log (D+100) \log \log \log (D+100)}{8C \log \log (D+100)} +1 \right) .
	$$
	One readily checks that $u^{-u/2} \le (\log(D+100))^{-C}$. Using $4/\log \log \log 100 <10$, we also have $T^u \le (D+100)^{1/2} (\log (D+100))^{10C}$, hence
	$$
	\frac{T^u \log (D+2) \log T}{D} \ll \frac{(\log (D+100))^{12C}}{D^{1/2}}
	$$
	is negligible compared to $(\log (D+2))^{-C}$.
\end{proof}	

\section{Second moment bounds} \label{sec_4}

In this section, we show how two second moment bounds, stated as Propositions~\ref{prop_secondmoment1} and~\ref{prop_secondmoment2} below, together with the overlap estimate in Lemma~\ref{lemma_over} imply Theorem~\ref{th2}. These Propositions should be compared to the second moment bound of Koukoulopoulos and Maynard~\cite[Proposition 5.4]{km}, which, together with the overlap estimate of Pollington and Vaughan in equation \eqref{pv*}, implies the Duffin--Schaeffer conjecture.

Let $D(q,r)$ be as in \eqref{D_def}. For the sake of readability, let
\begin{equation}\label{L_def}
L_s (q,r) := \sum_{\substack{p \mid \frac{qr}{\gcd(q,r)^2}, \\ p \geq s}} \frac{1}{p},
\end{equation}
and
\begin{equation}\label{F_def}
F(x)=F_C(x):= \exp \left( \frac{\log (x+100) \log \log \log (x+100)}{8C \log \log (x+100)} +1 \right) .
\end{equation}

\begin{prop}\label{prop_secondmoment1} For any $Q \in \mathbb{N}$ and any real $t \ge 1$, the set
\[ \mathcal{E}_t = \left\{ (q,r) \in [1,Q]^2 \, : \, D(q,r) \le \frac{\Psi(Q)}{t} \right\} \]
satisfies
\[ \sum_{(q,r) \in \mathcal{E}_t} \frac{\varphi(q) \psi(q)}{q} \cdot \frac{\varphi(r) \psi(r)}{r} \ll \frac{\Psi(Q)^2}{t^{1/5}}, \]
with an absolute implied constant.
\end{prop}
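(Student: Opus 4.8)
The first move is to exploit the symmetry $q\leftrightarrow r$: it suffices to bound the sum over pairs with $\psi(q)/q\ge\psi(r)/r$ and double. For such a pair, $D(q,r)=\lcm(q,r)\,\psi(q)/q$, so, writing $Y:=\Psi(Q)/t$ and $r':=r/\gcd(q,r)$, membership $(q,r)\in\mathcal{E}_t$ reads $\lcm(q,r)\le Yq/\psi(q)$, equivalently $r'\le Y/\psi(q)$. Two consequences I record at the outset. First, $D(q,r)\ge\psi(q)$ and $D(q,r)\ge\psi(r)$, so $\mathcal{E}_t$ forces $\psi(q),\psi(r)\le Y$, and hence the weights $w_q:=\varphi(q)\psi(q)/q$, $w_r$ satisfy $w_q,w_r\le Y$. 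Second, since $\psi(q)\ge\varphi(q)\psi(q)/q=w_q$, the defining constraint strengthens to $r'\le Y/w_q$. For $t$ below an absolute constant ($t\le 4^5$, say) the trivial identity $\sum_{q,r\le Q}w_qw_r=\Psi(Q)^2/4$ already gives the claim with implied constant $1$, so from now on $t$ is large.

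The core of the argument is a refined bound valid for large $t$. I would fix $q$ and estimate $T(q):=\sum_{r}w_r$, the sum over admissible $r$ (those with $(q,r)\in\mathcal{E}_t$ and $\psi(r)/r\le\psi(q)/q$), then bound $\sum_q w_qT(q)$. Group the admissible $r$ by $g=\gcd(q,r)\mid q$ and $r'=r/g$, which ranges over $r'\le\min(Y/\psi(q),\,Q/g)$; for each fixed $g$ use on the one hand the trivial ceiling $w_r\le\psi(r)\le 1/2$, and on the other hand the \emph{budget inequality} $\sum_{r'\le R}w_{gr'}\le\min(R/2,\Psi(Q)/2)$, which is just the observation that the $gr'$ are distinct integers $\le Q$ and $\sum_{m\le Q}\varphi(m)\psi(m)/m=\Psi(Q)/2$. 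This yields a bound for $T(q)$ in terms of $Y$, $\psi(q)$, $q$, and divisor sums over $q$; the outer sum over $q$ then consumes $\sum_q w_q=\Psi(Q)/2$, where one invokes the elementary fact $\sum_{q\le Q}\tau(q)\psi(q)/q\ll\Psi(Q)$, valid because $\tau(q)\le\varphi(q)$ for all $q$ outside a finite set (so $\tau(q)\psi(q)/q\le\varphi(q)\psi(q)/q$ up to an $O(1)$ correction), together with the Mertens bound $q/\varphi(q)\ll\log\log q$. After this and a Cauchy–Schwarz step one obtains, alongside the crude bound $\sum_{\mathcal{E}_t}w_qw_r\ll\Psi(Q)^2$, a refined bound that gains a genuine positive power of $Y/\Psi(Q)=1/t$. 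Interpolating geometrically between the crude and refined estimates produces exactly the exponent $1/5$ in $\Psi(Q)^2/t^{1/5}$, which is the value that survives this optimization and is not expected to be sharp.

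The main obstacle, and the reason the refined bound is delicate, is that any estimate which simply replaces the weights $w_q$ by their trivial ceiling $1/2$ — or replaces a weighted count of admissible $r$ by the number of integers in a range — loses a factor of order $Q$, which is fatal whenever $\Psi(Q)$ is much smaller than $Q$, as it typically is. The whole point is therefore to organize the double sum so that the total mass $\Psi(Q)$, rather than the cardinality bound $Q$, controls every partial sum: concretely, one must never discard the factor $\psi(\cdot)$ (equivalently $w_{(\cdot)}$) in whichever of $q$, $r$ plays the role of the free summation variable, and must absorb the resulting divisor sums using $\sum_q\tau(q)\psi(q)/q\ll\Psi(Q)$. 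Getting the $Q$ replaced by $\Psi(Q)$ in the refined bound is the technical heart of the section, and everything downstream (the interpolation, the value $1/5$) is routine once that replacement is in hand.
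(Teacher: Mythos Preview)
Your approach is elementary and entirely different from the paper's, which invokes the full Koukoulopoulos--Maynard GCD-graph machinery (Proposition~\ref{prop_goodgcd1}) and reads off $\delta(G)^{10}\ll t^{-2}$ from a quality comparison. Unfortunately, your sketch has a genuine gap at the point where you pass from the inner sum over $r$ to the outer sum over $q$.

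Concretely: after grouping admissible $r$ by $g=\gcd(q,r)\mid q$, your two bounds for the inner block are $\sum_{r'\le Y/\psi(q)}w_{gr'}\le Y/(2\psi(q))$ and $\le\Psi(Q)/2$, yielding $T(q)\le\tau(q)\min\bigl(Y/(2\psi(q)),\,\Psi(Q)/2\bigr)$. Feeding the first option into $\sum_q w_qT(q)$ cancels the factor $\psi(q)$ inside $w_q$ and leaves $\tfrac{Y}{2}\sum_{q\le Q}\tau(q)\varphi(q)/q$, a sum of order $Q\log Q$ with no $\psi$ left --- exactly the ``fatal'' loss you warned against. Feeding the second option gives $\tfrac{\Psi(Q)}{2}\sum_q w_q\tau(q)=\tfrac{\Psi(Q)}{2}\sum_q\varphi(q)\tau(q)\psi(q)/q$, which is \emph{not} the sum $\sum_q\tau(q)\psi(q)/q$ you cite; the extra factor $\varphi(q)/q$ is harmless but the extra factor $\tau(q)$ is not, and $\sum_q w_q\tau(q)$ can exceed $\Psi(Q)$ by an arbitrarily large factor when $\psi$ is supported on integers with many divisors (e.g.\ the smooth integers used in the Duffin--Schaeffer counterexample). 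No combination via $\min(a,b)\le\sqrt{ab}$ or Cauchy--Schwarz repairs this: any such interpolation still carries a weight $\tau(q)\psi(q)^{1-\theta}$ for some $\theta>0$, and there is no uniform control on $\sum_q\varphi(q)\tau(q)\psi(q)^{1-\theta}/q$ in terms of $\Psi(Q)$.

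This is not an accident of bookkeeping. The statement you are trying to prove is, up to cosmetic changes, the core second-moment estimate whose proof was the main innovation of Koukoulopoulos--Maynard; the whole purpose of the GCD-graph iteration is to avoid the crude $\tau(q)$ loss that divisor-grouping incurs. Your final sentence about interpolation ``producing exactly the exponent $1/5$'' is also a red flag: in the paper, $1/5$ arises from the artificial exponent $10$ in the quality functional (so $\delta^{10}\ll t^{-2}$), not from any natural optimization, and an independent elementary argument would have no reason to land on that value.
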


\begin{prop}\label{prop_secondmoment2} Let $C \ge 1$ be arbitrary. For any $Q \in \mathbb{N}$ and any real $t \ge 1$, the set
\[ \mathcal{E}_t = \left\{ (q,r) \in [1,Q]^2 \, : \, D(q,r) \le t \Psi(Q) \quad \textrm{and} \quad L_{F(t)}(q,r) \ge \frac{1}{F(t)^{1/4}} \right\} \]
satisfies
\[ \sum_{(q,r) \in \mathcal{E}_t} \frac{\varphi(q) \psi(q)}{q} \cdot \frac{\varphi(r) \psi(r)}{r} \ll \frac{\Psi(Q)^2}{F(t)^{1/2}} \]
with an implied constant depending only on $C$.
\end{prop}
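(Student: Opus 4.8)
The plan is to deduce both second moment bounds from the GCD-graph machinery of Koukoulopoulos and Maynard, Proposition~\ref{prop_secondmoment2} being the one in which the ``anatomy of integers'' input is decisive. Write $\mu(\{n\}) := \varphi(n)\psi(n)/n$, so that $\sum_{n \le Q}\mu(\{n\}) = \Psi(Q)/2$, and set $M := \sum_{(q,r)\in\mathcal{E}_t}\mu(\{q\})\mu(\{r\})$. We may assume $M > \Psi(Q)^2/F(t)$, for otherwise there is nothing to prove; as $F(t)$ grows, this lets us suppose $t$, and with it the threshold $F(t)$ of~\eqref{F_def}, to be as large as we please, the complementary range being trivial after adjusting constants. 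We first recast $M$ as the \emph{edge mass} of a GCD graph in the sense of~\cite{km} (reviewed in Section~\ref{sec_5}): the vertex set is $V := \{ n \le Q : \mu(\{n\}) > 0 \}$ carrying the weight $\mu$, the edge set is $E := \mathcal{E}_t$, and the graph is decorated with the ``anatomy data'' recording, prime by prime, the valuations of the vertices. The two conditions defining $\mathcal{E}_t$ become structural features of the graph: the bound $D(q,r) \le t\Psi(Q)$ caps the ``gcd scale'' $D$ of~\eqref{D_def} across every edge, which is what pins down the correct cut-off $F(t)$ between small and large primes (note that $A_C$ in Lemma~\ref{lemma_over} is exactly $F$ evaluated at the corresponding scale); and the bound $L_{F(t)}(q,r) \ge F(t)^{-1/4}$, with $L$ as in~\eqref{L_def}, says that on every edge the primes $\ge F(t)$ at which the two endpoints have unequal valuations already carry reciprocal sum $\ge F(t)^{-1/4}$.

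We then run the iteration. Starting from this graph, one repeatedly passes to a subgraph --- each step either deleting a prime from, or adjoining one to, the graph's prime set, and restricting the vertices and edges accordingly --- trading a controlled loss of edge density for a strict gain of ``quality'', along the lines of~\cite{km} but with the modifications of the two concluding sections. The process is carried on until it reaches a ``good'' subgraph $G'$ in which the rigid part of the vertices (the primes shared by all vertices to the same power) has been separated from the variable part, so that the primes $\ge F(t)$ witnessing $L_{F(t)}(q,r) \ge F(t)^{-1/4}$ on the surviving edges all lie in the variable part. The delicate point --- and the place where the real work will lie --- is that the iteration may be pushed down only to primes of size $F(t)$: each step must raise the quality by enough to pay for its density loss, and the cumulative density loss all the way down to that scale must stay below, say, $F(t)^{1/4}$, a budget governed by the order of the sieve error terms, which is precisely why the same function $F$ appears here and in the overlap estimate. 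If the balancing succeeds, we exit with a good subgraph $G'$ whose edge density is still $\gg M/(\Psi(Q)^2 F(t)^{1/4})$.

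We finally extract the contradiction from the anatomy of integers. Writing each vertex of $G'$ as (rigid part) times (variable part $m$), the conditions on the edges of $G'$ force $\sum_{p \mid m,\ p \ge F(t)} 1/p \gg F(t)^{-1/4}$ for the variable parts of the endpoints of a positive-mass set of edges. A Rankin-type estimate then shows that the integers $m \le Q$ with $\sum_{p \mid m,\ p \ge F(t)} 1/p \ge c\,F(t)^{-1/4}$ number only $\ll Q\exp(-c'F(t)^{3/4})$ --- far sparser than the factor $F(t)^{1/4}$ we must beat. Because the iteration has already stripped away the rigid part, this estimate applies legitimately to the variable parts of the vertices of $G'$ (it would fail for the raw vertex set, since $\mu$ may be concentrated on integers all sharing the same medium prime factors). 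Feeding this sparsity back into the edge-density lower bound of the previous paragraph forces $M \ll \Psi(Q)^2/F(t)^{1/2}$, which is the assertion.

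The main obstacle, as indicated, is the density bookkeeping in the iteration: keeping the total loss below a fixed small power of $F(t)$ all the way down to the prime scale $F(t)$, which requires the quality increment at each step to dominate its density cost and forbids pushing the threshold past the point where the sieve errors (those of Lemma~\ref{lemma_over}) cease to be negligible. A secondary difficulty is that $\mu$ is an arbitrary non-negative weight --- neither multiplicative nor smooth --- so the classical anatomy bounds cannot be used directly, and it is the GCD-graph decomposition that turns them into a statement about $\mu$. The proof of Proposition~\ref{prop_secondmoment1} runs along the same template, with the small value of $D$ taking over from the $L$-condition as the source of the saving, so that the anatomy estimate enters in a milder form for which the weaker exponent $1/5$ is enough.
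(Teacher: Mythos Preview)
Your strategy matches the paper's: set up the GCD graph on $[1,Q]$ with weight $\mu(n)=\varphi(n)\psi(n)/n$ and edge set $\mathcal{E}_t$, iterate to a good subgraph via the quality-increment machinery, and then invoke an anatomy-of-integers estimate of Rankin type on the variable parts. The paper packages the iteration output as Proposition~\ref{prop_goodgcd2} and the anatomy input as Lemma~\ref{lemma_anatomy}, and the endgame is exactly the one you sketch.

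There is, however, a structural point you have left implicit that in the paper is a genuine case split. You write ``If the balancing succeeds, we exit with a good subgraph $G'$ whose edge density is still $\gg M/(\Psi(Q)^2 F(t)^{1/4})$'' and then run the anatomy argument, but you do not say what happens when the balancing \emph{fails}, i.e.\ when the cumulative density loss exceeds $F(t)^{1/4}$. The paper's Lemma~\ref{quality_density_lemma} shows that this alternative forces $q(G') \ge t^3 q(G)$, and then the conclusion follows from the quality bound alone (as in the proof of Proposition~\ref{prop_secondmoment1}), with no anatomy input needed; this is case d)(i) of Proposition~\ref{prop_goodgcd2}. Without this branch the argument is incomplete: the density-loss budget $F(t)^{1/4}$ cannot be guaranteed unconditionally, only as one horn of a dichotomy. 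Relatedly, your closing remark that Proposition~\ref{prop_secondmoment1} uses the anatomy estimate ``in a milder form'' is not quite right --- its proof in the paper uses only the quality route (Proposition~\ref{prop_goodgcd1}) and no anatomy at all.
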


We now present the proof of Theorem~\ref{th2} assuming Propositions~\ref{prop_secondmoment1} and~\ref{prop_secondmoment2}.

\begin{proof}[Proof of Theorem~\ref{th2}] Fix $C>10$, and let $Q \in \mathbb{N}$ be such that $\Psi(Q) \ge 2$. We may assume that $\Psi(Q)$ is large enough in terms of $C$, since otherwise, the claim follows from the estimate \eqref{KMtechnical} of Koukoulopoulos and Maynard.

We partition the index set $[1,Q]^2$ into the sets
\[ \begin{split} \mathcal{E}^{1} &= \left\{ (q,r) \in [1,Q]^2 \, : \, q=r \right\}, \\ \mathcal{E}^{2} &= \left\{ (q,r) \in [1,Q]^2 \, : \, q \neq r, \quad D(q,r) \le \frac{\Psi(Q)}{(\log \Psi (Q))^C}, \quad L_{F(\Psi(Q))}(q,r) \le 1 \right\}, \\ \mathcal{E}^{3} &= \left\{ (q,r) \in [1,Q]^2 \, : \, q \neq r, \quad D(q,r) \le \frac{\Psi(Q)}{(\log \Psi (Q))^C}, \quad L_{F(\Psi(Q))}(q,r) > 1 \right\}, \\ \mathcal{E}^{4} &= \left\{ (q,r) \in [1,Q]^2 \, : \, q \neq r, \quad D(q,r) > \frac{\Psi(Q)}{(\log \Psi (Q))^C}, \quad L_{F(D(q,r))} (q,r) \le \frac{1}{(\log \Psi (Q))^C} \right\}, \\ \mathcal{E}^{5} &= \left\{ (q,r) \in [1,Q]^2 \, : \, q \neq r, \quad D(q,r) > \frac{\Psi(Q)}{(\log \Psi (Q))^C}, \quad L_{F(D(q,r))} (q,r) > \frac{1}{(\log \Psi (Q))^C} \right\}. \end{split} \]
The contribution of $\mathcal{E}^{1}$ is clearly negligible:
\begin{equation}\label{edgeset1sum}
\sum_{(q,r) \in \mathcal{E}^{1}} \lambda (\mathcal{A}_q \cap \mathcal{A}_r) = \sum_{q=1}^Q \lambda (\mathcal{A}_q) = \Psi (Q).
\end{equation}

Now we consider $\mathcal{E}^{2}$. For any $(q,r) \in \mathcal{E}^{2}$, the condition $L_{F(\Psi(Q))}(q,r) \le 1$ together with Mertens' theorem ensures that
\[ \begin{split} \prod_{p \mid \frac{qr}{\gcd (q,r)^2}} \left( 1 + \frac{1}{p-1} \right) &\le \exp \bigg( \sum_{\substack{p \mid \frac{qr}{\gcd (q,r)^2}, \\ p<F (\Psi(Q))}} \frac{2}{p} + \sum_{\substack{p \mid \frac{qr}{\gcd (q,r)^2}, \\ p \ge F (\Psi(Q))}} \frac{2}{p} \bigg) \\ &\ll \exp \left( 2 \log \log F(\Psi (Q)) \right) \\ &\ll (\log \Psi (Q))^2. \end{split} \]
In the last step we used the rough estimate $F(x) \le x$ for large enough $x$. The overlap estimate (Lemma~\ref{lemma_over}) thus shows that for any $(q,r) \in \mathcal{E}^{2}$,
\[ \lambda (\mathcal{A}_q \cap \mathcal{A}_r) \ll \lambda (\mathcal{A}_q) \lambda (\mathcal{A}_r) (\log \Psi (Q))^2 . \]
Applying Proposition~\ref{prop_secondmoment1} with $t=(\log \Psi (Q))^C$ leads to
\begin{equation}\label{edgeset2sum}
\sum_{(q,r) \in \mathcal{E}^{2}} \lambda (\mathcal{A}_q \cap \mathcal{A}_r) \ll \frac{\Psi(Q)^2}{(\log \Psi (Q))^{C/5-2}} .
\end{equation}

Next we consider $\mathcal{E}^{3}$. For any $(q,r) \in \mathcal{E}^{3}$, let $j(q,r)$ be the maximal integer $j$ such that $L_{F(\exp \exp (j))}(q,r) >1$; note that by construction $j(q,r) \ge \lfloor \log \log \Psi (Q) \rfloor$. Let $(q,r) \in \mathcal{E}^{3}$ with $j(q,r)=j$. By definition, $L_{F(\exp \exp (j+1))}(q,r) \le 1$, hence Mertens' theorem implies
\[ \begin{split} \prod_{p \mid \frac{qr}{\gcd (q,r)^2}} \left( 1 + \frac{1}{p-1} \right) &\le \exp \bigg( \sum_{\substack{p \mid \frac{qr}{\gcd (q,r)^2}, \\ p < F(\exp \exp (j+1))}} \frac{2}{p} + \sum_{\substack{p \mid \frac{qr}{\gcd (q,r)^2}, \\ p \ge F(\exp \exp (j+1))}} \frac{2}{p} \bigg) \\ &\ll \exp \left( 2 \log \log F (\exp \exp (j+1)) \right) \\ &\ll \exp (2j) . \end{split} \]
Thus the overlap estimate gives
\[ \lambda (\mathcal{A}_q \cap \mathcal{A}_r) \ll \lambda (\mathcal{A}_q) \lambda (\mathcal{A}_r) \exp (2j) , \]
and applying Proposition~\ref{prop_secondmoment2} with $t=\exp \exp (j)$ leads to
\begin{equation}\label{edgeset3sum}
\begin{split} \sum_{(q,r) \in \mathcal{E}^{3}} \lambda (\mathcal{A}_q \cap \mathcal{A}_r) &= \sum_{j \ge \lfloor \log \log \Psi (Q) \rfloor} \sum_{\substack{(q,r) \in \mathcal{E}^{3}, \\ j(q,r)=j}} \lambda (\mathcal{A}_q \cap \mathcal{A}_r) \\ &\ll \sum_{j \ge \lfloor \log \log \Psi (Q) \rfloor} \exp (2j) \frac{\Psi(Q)^2}{F(\exp \exp (j))^{1/2}} \\ &\ll \frac{\Psi(Q)^2}{(\log \Psi (Q))^C} . \end{split}
\end{equation}
In the last step we used the fact that $F(x)$ increases faster than any power of $\log x$.

Now we consider $\mathcal{E}^{4}$. For any $(q,r) \in \mathcal{E}^{4}$,
\[ \prod_{\substack{p \mid \frac{qr}{\gcd (q,r)^2}, \\ p>F(D(q,r))}} \left( 1+\frac{1}{p-1} \right) \le \exp \left( 2 L_{F(D(q,r))} (q,r) \right) = 1+O \left( \frac{1}{(\log \Psi(Q))^{C}} \right) . \]
The overlap estimate thus gives
\[ \lambda (\mathcal{A}_q \cap \mathcal{A}_r) \le \lambda (\mathcal{A}_q) \lambda (\mathcal{A}_r) \left( 1 + O \left( \frac{1}{(\log \Psi(Q))^{C}} \right) \right) , \]
hence
\begin{equation}\label{edgeset4sum}
\sum_{(q,r) \in \mathcal{E}^{4}} \lambda (\mathcal{A}_q \cap \mathcal{A}_r) \le \Psi (Q)^2 + O \left( \frac{\Psi(Q)^2}{(\log \Psi(Q))^{C}} \right) .
\end{equation}

Finally, we consider $\mathcal{E}^{5}$. For any $(q,r) \in \mathcal{E}^{5}$, let $i(q,r)$ be the maximal integer $i$ such that
\[ L_{F \left( \exp \exp \frac{i}{(\log \Psi(Q))^C} \right)} (q,r) > \frac{1}{2(\log \Psi (Q))^C} .  \]
Note that
\[ L_{F \left( \frac{\Psi(Q)}{(\log \Psi (Q))^C} \right)} (q,r) \ge L_{F(D(q,r))} (q,r) > \frac{1}{(\log \Psi (Q))^C}, \]
therefore
\[ i(q,r) \ge \left\lfloor (\log \Psi (Q))^C \log \log \frac{\Psi(Q)}{(\log \Psi (Q))^C} \right\rfloor . \]
Let $(q,r) \in \mathcal{E}^{5}$ such that $i(q,r)=i$. By definition,
\[ L_{F \left( \exp \exp \frac{i+1}{(\log \Psi(Q))^C} \right)} (q,r) \le \frac{1}{2(\log \Psi (Q))^C}, \]
hence Mertens' theorem shows that
\[ \begin{split} \prod_{p \mid \frac{qr}{\gcd (q,r)^2}} \left( 1+\frac{1}{p-1} \right) &\le \exp \Bigg( \sum_{\substack{p \mid \frac{qr}{\gcd (q,r)^2}, \\ p < F \left( \exp \exp \frac{i+1}{(\log \Psi(Q))^C} \right) }} \frac{2}{p} + \sum_{\substack{p \mid \frac{qr}{\gcd (q,r)^2}, \\ p \ge F \left( \exp \exp \frac{i+1}{(\log \Psi(Q))^C} \right) }} \frac{2}{p} \Bigg) \\ &\ll \exp \left( 2 \log \log F \left( \exp \exp \frac{i+1}{(\log \Psi(Q))^C} \right) \right) \\ &\ll \exp \left( \frac{2i}{(\log \Psi(Q))^C} \right) . \end{split} \]
The overlap estimate thus gives
\[ \lambda (\mathcal{A}_q \cap \mathcal{A}_r) \ll \lambda (\mathcal{A}_q) \lambda (\mathcal{A}_r) \exp \left( \frac{2i}{(\log \Psi(Q))^C} \right) . \]
Another application of Mertens' theorem leads to
\[ \begin{split} \sum_{F \left( \exp \exp \frac{i}{(\log \Psi(Q))^C} \right) \le p \le F \left( \exp \exp \frac{i+1}{(\log \Psi(Q))^C} \right)} \frac{1}{p} = &\log \log F \left( \exp \exp \frac{i+1}{(\log \Psi(Q))^C} \right) \\ &- \log \log F \left( \exp \exp \frac{i}{(\log \Psi(Q))^C} \right) \\ &+ O \left( \exp \left( - \sqrt{\log F \left( \exp \exp \frac{i}{(\log \Psi (Q))^C} \right)} \right) \right) \\ \le & \frac{1}{2(\log \Psi(Q))^C} . \end{split} \]
In the last step we used the facts that $h(x):=\log \log F (\exp \exp (x))$ satisfies $h'(x)=1+o(1)$, and $\log F (\exp \exp (x)) \ge e^{x/2}$ for large enough $x$. It follows that
\[ L_{F \left( \exp \exp \frac{i}{(\log \Psi (Q))^C} \right)} (q,r) \le \frac{1}{2(\log \Psi (Q))^C} + \frac{1}{2(\log \Psi (Q))^C} = \frac{1}{(\log \Psi (Q))^C}, \]
hence $D(q,r) \le \exp \exp \frac{i}{(\log \Psi (Q))^C}$. Applying Proposition~\ref{prop_secondmoment2} with $t=\exp \exp \frac{i}{(\log \Psi (Q))^C}$ thus leads to
\[ \begin{split} \sum_{\substack{(q,r) \in \mathcal{E}^{5}, \\ i(q,r)=i}} \lambda (\mathcal{A}_q \cap \mathcal{A}_r) &\ll \exp \left( \frac{2i}{(\log \Psi(Q))^C} \right) \frac{\Psi(Q)^2}{F \left( \exp \exp \frac{i}{(\log \Psi(Q))^C} \right)^{1/2}} \\ &\ll \frac{\Psi(Q)^2}{\exp \exp \frac{i}{2 (\log \Psi(Q))^C}} , \end{split} \]
and by summing over all possible values of $i$,
\begin{equation}\label{edgeset5sum}
\begin{split} \sum_{(q,r) \in \mathcal{E}^{5}}  \lambda (\mathcal{A}_q \cap \mathcal{A}_r) &\ll \sum_{i \ge \left\lfloor (\log \Psi (Q))^C \log \log \frac{\Psi(Q)}{(\log \Psi (Q))^C} \right\rfloor} \frac{\Psi(Q)^2}{\exp \exp \frac{i}{2 (\log \Psi(Q))^C}} \\ &\ll \sum_{m \ge \log \log \frac{\Psi (Q)}{(\log \Psi(Q))^C}} \frac{\Psi(Q)^2 (\log \Psi(Q))^C}{\exp \exp \frac{m}{2}} \\ &\ll \frac{\Psi(Q)^2}{(\log \Psi (Q))^C} . \end{split}
\end{equation}
Combining formulas \eqref{edgeset1sum}--\eqref{edgeset5sum} shows that
\[ \sum_{q,r=1}^Q \lambda (\mathcal{A}_q \cap \mathcal{A}_r) \le \Psi (Q)^2 + O \left( \frac{\Psi (Q)^2}{(\log \Psi (Q))^{C/5-2}} \right), \]
as claimed.
\end{proof}

\section{GCD graphs: notations and basic properties} \label{sec_5}

The proof of the Duffin--Schaeffer conjecture given by Koukoulopoulos and Maynard in~\cite{km} is based on a concept called ``GCD graphs'', which they introduced in that paper. Very roughly speaking, a GCD graph encodes information on the divisor structure of a set of integers. To each GCD graph, a ``quality'' can be assigned, and the key argument in~\cite{km} is that one can iteratively pass to subgraphs of the original GCD graph in such a way that in each step either the quality increases and/or the divisor structure becomes more regular. At the end of this procedure, one has a graph that either has particularly high quality, or a very regular divisor structure. High quality directly implies that the density of the edge set, essentially controlling the influence of the bad pairs $(q,r)$ in such sets as $\mathcal{E}^{1}$ -- $\mathcal{E}^{5}$ of the previous section, is small, leading to the desired result. If one cannot achieve high quality, then one obtains a GCD subgraph that has perfect control of the divisor structure of the underlying set of integers; in this case, results on the ``anatomy of integers'' can be used to show that the problematic factor $\prod_{\substack{p \mid \frac{qr}{\gcd(q,r)^2}}} \left(1 + \frac{1}{p} \right)$ in the overlap estimate can only be large for a very small proportion of pairs $(q,r)$, again leading to the desired result.

We do not give a fully detailed presentation of the notion of a GCD graph here, and refer the reader to Section 6 of~\cite{km} instead. However, for the convenience of the reader, we will recall the basic definitions and some of the basic properties of GCD graphs.

A GCD graph is a septuple $G = (\mu,\mathcal{V},\mathcal{W},\mathcal{E},\mathcal{P},f,g)$, for which the following properties hold.
\begin{enumerate}[a)]
\item $\mu$ is a measure on $\mathbb{N}$ for which $\mu(n)<\infty$ for all $n$. This measure is extended to $\mathbb{N}^2$ by defining
$$
\mu(\mathcal{N}) = \sum_{(n_1,n_2) \in \mathcal{N}} \mu(n_1) \mu(n_2), \qquad \mathcal{N} \subseteq \mathbb{N}^2.
$$
\item The \emph{vertex sets} $\mathcal{V}$ and $\mathcal{W}$ are finite sets of positive integers.
\item The \emph{edge set} $\mathcal{E}$ is a subset of $\mathcal{V} \times \mathcal{W}$.
\item $\mathcal{P}$ is a set of primes.
\item $f$ and $g$ are functions from $\mathcal{P}$ to $\mathbb{Z}_{\geq 0}$ such that for all $p \in \mathcal{P}$,
\begin{enumerate}[(i)]
\item $p^{f(p)} \mid v$ for all $v \in \mathcal{V}$ and $p^{g(p)} \mid w$ for all $w \in \mathcal{W}$;
\item if $(v,w) \in \mathcal{E}$, then $p^{\min(f(p),g(p))} \parallel \gcd(v,w)$;
\item if $f(p) \neq g(p)$, then $p^{f(p)} \parallel v$ for all $v \in \mathcal{V}$ and $p^{g(p)} \parallel w$ for all $w \in \mathcal{W}$.
\end{enumerate}
\end{enumerate}

For two GCD graphs $G = (\mu,\mathcal{V},\mathcal{W},\mathcal{E},\mathcal{P},f,g)$ and $G' = (\mu',\mathcal{V}',\mathcal{W}',\mathcal{E}',\mathcal{P}',f',g')$ we say that $G'$ is a GCD subgraph of $G$, and write $G' \preceq G$, if
$$
\mu' = \mu, \quad \mathcal{V}' \subseteq \mathcal{V}, \quad  \mathcal{W}' \subseteq \mathcal{W}, \quad \mathcal{E}' \subseteq \mathcal{E}, \quad \mathcal{P}' \supseteq \mathcal{P},
$$
and if $f$ resp.\ $g$ coincide with $f'$ resp.\ $g'$ on $\mathcal{P}$. 

For given $\mathcal{V}$ and $k \geq 0$ we define $\mathcal{V}_{p^k} = \{v \in \mathcal{V}:~p^k \parallel v\}$. We write $\mathcal{E}_{p^k,p^\ell} = \mathcal{E} \cap (\mathcal{V}_{p^k} \times \mathcal{W}_{p^\ell})$. It turns out that for $p \not \in \mathcal{P}$, the GCD graph
$$
G_{p^k,p^\ell} := (\mu, \mathcal{V}_{p^k},\mathcal{W}_{p^\ell}, \mathcal{E}_{p^k,p^\ell}, \mathcal{P} \cup \{p\}, f_{p^k},g_{p^\ell})
$$
is a GCD subgraph of $G$ (where $f_{p^k}$ resp.\ $g_{p^\ell}$ are defined in such a way that they coincide with $f$ resp.\ $g$ on $\mathcal{P}$, and $f_{p^k}(p)=k$ and $g_{p^{\ell}}(p)=\ell$).

For a GCD graph $G = (\mu,\mathcal{V},\mathcal{W},\mathcal{E},\mathcal{P},f,g)$ we define 
\begin{enumerate}[(i)]
\item The \emph{edge density} 
$$
\delta(G) = \frac{\mu(\mathcal{E})}{\mu(\mathcal{V}) \mu(\mathcal{W})},
$$
provided that $\mu(\mathcal{V}) \mu(\mathcal{W}) \neq 0$.
If $\mu(\mathcal{V}) \mu(\mathcal{W}) = 0$, we define $\delta(G)$ to be $0$.
\item The \emph{neighborhood sets} 
$$
\Gamma_G(v) = \left\{ w \in \mathcal{W}:~(v,w) \in \mathcal{E} \right\}, \qquad v \in \mathcal{V},
$$
and
$$
\Gamma_G(w) = \left\{v \in \mathcal{V}:~(v,w) \in \mathcal{E} \right\}, \qquad w \in \mathcal{W}.
$$
\item The set $\mathcal{R}(G)$ of primes that have not (yet) been accounted for in the GCD graph:
$$
\mathcal{R}(G) = \left\{ p \not\in \mathcal{P}:~\exists (v,w) \in \mathcal{E} \text{ such that } p \mid \gcd(v,w) \right\}.
$$
\item The \emph{quality}
$$
q(G) = \delta (G)^{10}  \mu(\mathcal{V}) \mu(\mathcal{W}) \prod_{p \in \mathcal{P}} \frac{p^{|f(p)-g(p)|}}{\left(1 - \mathbbm{1}_{f(p)=g(p) \geq 1}/p\right)^2 \left(1 - p^{-31/30} \right)^{10}}.
$$
\end{enumerate}
This notion of quality of a GCD graph is an ad-hoc definition, which turns out to serve the required purpose for the argument of~\cite{km}. We refer to~\cite{km} for the heuristic reasoning which led to this particular definition. It is possible that a modified notion of quality would be better suited for the argument in the present paper. However, we preferred to stick to the original definition of quality from~\cite{km}, since this allows us to directly use a large part of the iteration procedure from~\cite{km} without the need to adapt it to a modified framework. 

We also introduce
\[ \mathcal{R}^{\twonotes}(G) := \left\{ p \in \mathcal{R}(G) \, : \, \forall k \ge 0 \,\,\, \min \left\{ \frac{\mu(\mathcal{V}_{p^k})}{\mu(\mathcal{V})},\frac{\mu(\mathcal{W}_{p^k})}{\mu(\mathcal{W})} \right\} \le 1 - \frac{1}{\sqrt{p}} \right\} . \]
This should be compared to the sets $\mathcal{R}^{\sharp}(G)$ and $\mathcal{R}^{\flat}(G)$ used in~\cite{km}, the latter of which is defined analogous to our $\mathcal{R}^{\twonotes}(G)$ but with $1-10^{40}/p$ instead of $1-1/\sqrt{p}$. Finally, we define
\[\mathcal{P}_{\text{diff}}(G) := \{p \in \mathcal{P} \, : \, f(p) \neq g(p) \} .\]

Among the basic properties of GCD graphs are the facts that $G_1 \preceq G_2$ and $G_2 \preceq G_3$ together imply $G_1 \preceq G_3$ (transitivity), and that $G_1 \preceq G_2$ implies $\mathcal{R}(G_1) \subseteq \mathcal{R}(G_2)$. However, in general $G_1 \preceq G_2$ does not imply that $\mathcal{R}^{\twonotes}(G_1) \subseteq \mathcal{R}^{\twonotes}(G_2)$.

\section{Good GCD subgraphs} \label{sec_6}

In this section, we state two results on the existence of a ``good'' GCD subgraph of an arbitrary GCD graph with trivial multiplicative data (i.e.\ $\mathcal{P}=\emptyset$) in the form of Propositions~\ref{prop_goodgcd1} and~\ref{prop_goodgcd2} below; these should be compared to~\cite[Proposition 7.1]{km}. We then show how Proposition~\ref{prop_secondmoment1} resp.\ \ref{prop_secondmoment2} follow from Proposition~\ref{prop_goodgcd1} resp.\ \ref{prop_goodgcd2}.
 
\begin{prop}\label{prop_goodgcd1}
Let $G=(\mu,\mathcal{V},\mathcal{W},\mathcal{E},\emptyset,f_\emptyset,g_\emptyset)$ be a GCD graph with trivial set of primes and edge density $\delta (G)>0$. Then there exists a GCD subgraph $G' = (\mu,\mathcal{V}',\mathcal{W}',\mathcal{E}',\mathcal{P}',f',g')$ of $G$ such that
\begin{enumerate}
 \item $\mathcal{R} (G') = \emptyset$.
 \item For all $v \in \mathcal{V}'$, we have $\mu(\Gamma_{G'}(v)) \geq \frac{9 \delta(G')}{10} \mu(\mathcal{W}')$.
 \item For all $w \in \mathcal{W}'$, we have $\mu(\Gamma_{G'}(w)) \geq \frac{9 \delta(G')}{10} \mu(\mathcal{V}')$.
 \item $q(G') \gg q(G)$ with an absolute implied constant.
\end{enumerate}
\end{prop}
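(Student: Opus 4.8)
The goal is to produce, starting from an arbitrary GCD graph $G$ with $\mathcal{P}=\emptyset$ and $\delta(G)>0$, a GCD subgraph $G'$ that has no unaccounted primes ($\mathcal{R}(G')=\emptyset$), is ``almost regular'' from both sides (properties (2) and (3)), and has not lost quality (property (4)). The natural approach is to run an iterative refinement procedure very much in the spirit of~\cite[Section 7]{km}, alternating between two kinds of cleaning steps, and to argue that the process terminates with a graph satisfying all four conditions.

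First I would set up the iteration. At each stage we have a current GCD subgraph $G_i \preceq G$ with $\delta(G_i)>0$. If $G_i$ already satisfies (1)--(3), we stop and set $G'=G_i$. Otherwise, one of two things fails, and we apply the corresponding step. \emph{Step A (quality/regularization step):} if $\mathcal{R}(G_i)\ne\emptyset$, pick a prime $p\in\mathcal{R}(G_i)$ and pass to one of the subgraphs $G_{i,p^k,p^\ell}=(\mu,\mathcal{V}_{p^k},\mathcal{W}_{p^\ell},\mathcal{E}_{p^k,p^\ell},\mathcal{P}\cup\{p\},f_{p^k},g_{p^\ell})$ that maximizes quality; here one invokes the combinatorial lemma of Koukoulopoulos--Maynard (their analogue of the ``$p$-decomposition raises quality'' lemma) guaranteeing that for a suitable choice of $(k,\ell)$ one has $q(G_{i,p^k,p^\ell})\ge q(G_i)$, and that $p\notin\mathcal{R}(G_{i,p^k,p^\ell})$ while no new primes enter $\mathcal{R}$. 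Since $\mathcal{R}$ is a subset of the (finite) set of primes dividing some $\gcd(v,w)$ with $(v,w)\in\mathcal{E}$, and is non-increasing along subgraphs, iterating Step A alone strictly shrinks $\mathcal{R}$ and hence terminates with $\mathcal{R}=\emptyset$. \emph{Step B (density/pruning step):} if instead (2) or (3) fails — say there is $v\in\mathcal{V}_i$ with $\mu(\Gamma_{G_i}(v))<\tfrac{9\delta(G_i)}{10}\mu(\mathcal{W}_i)$ — delete all such low-degree vertices (on both sides) from the vertex sets, keeping $\mathcal{P},f,g$ fixed, to obtain $G_{i+1}$. One checks this is a GCD subgraph, that deleting low-degree vertices cannot decrease the edge density, so $\delta(G_{i+1})\ge\delta(G_i)$, and — the standard pruning computation — that $\mu(\mathcal{V}_{i+1})\mu(\mathcal{W}_{i+1})$ does not drop below a constant multiple of $\mu(\mathcal{V}_i)\mu(\mathcal{W}_i)$ relative to the density gain, so that $q(G_{i+1})\gg q(G_i)$. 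More precisely, one runs Step B to a fixed point (the standard ``iteratively remove low-degree vertices'' argument, which stabilizes after finitely many rounds since the vertex sets are finite), landing on a graph where (2) and (3) both hold with the same density.

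The subtlety, and the main obstacle, is that Steps A and B do not commute: performing Step A (fixing a prime) can destroy the regularity achieved by a previous Step B, and performing Step B can return a prime to $\mathcal{R}$ only if... well, actually $\mathcal{R}$ is monotone under $\preceq$, so Step B cannot enlarge $\mathcal{R}$ — but it can fail to make progress on $\mathcal{R}$, and Step A can undo regularity. The clean way around this, following~\cite{km}, is the right \emph{ordering and termination argument}: run Step A repeatedly until $\mathcal{R}=\emptyset$ (finitely many steps, as noted), \emph{then} run Step B to a fixed point. The key point is that once $\mathcal{R}(G_i)=\emptyset$, Step B preserves this: since $\mathcal{R}(G_{i+1})\subseteq\mathcal{R}(G_i)=\emptyset$, pruning keeps $\mathcal{R}$ empty, so after the final Step B phase all of (1), (2), (3) hold simultaneously. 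Throughout, quality only increases (Step A: $\ge$; Step B: $\gg$ with an absolute constant per fixed-point computation, and the number of alternations is bounded — in fact we only alternate once in this ordering), giving $q(G')\gg q(G)$ with an absolute implied constant, which is (4). I would also record at the start the trivial but necessary observations that $\delta$ stays positive throughout (a vertex of positive degree survives Step B since $\delta>0$ forces the average degree to be positive, and Step A preserves a positive-density branch by the quality-maximization choice), so that all the quality ratios are well-defined.

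The genuinely delicate estimates are (i) the quality bookkeeping in Step A — verifying that the Koukoulopoulos--Maynard $p$-decomposition lemma applies verbatim here, which it does since our definition of $q(G)$ is identical to theirs; and (ii) the pruning lemma in Step B, showing $q$ does not decrease under removal of low-degree vertices. Both are available in~\cite{km}; the only thing to check is that nothing in our modified global strategy (the altered definition of $\mathcal{R}^{\twonotes}$, which is irrelevant here since it plays no role in Propositions~\ref{prop_goodgcd1}) interferes. I expect the write-up to consist mostly of citing the relevant lemmas of~\cite[Section 6]{km} for Steps A and B and then stitching them together with the termination/ordering argument sketched above; the main work is making the ordering argument airtight so that properties (1)--(3) hold at the common endpoint.
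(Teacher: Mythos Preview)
Your proposal is correct and follows essentially the same two-phase approach as the paper: first run the prime-fixing iteration (packaged here as Lemma~\ref{empty_R}, which in turn invokes~\cite[Propositions~8.1--8.3]{km}) to reach $\mathcal{R}=\emptyset$ with $q\gg q(G)$, then apply the pruning lemma (Lemma~\ref{lem85}, i.e.\ \cite[Lemma~8.5]{km}) to obtain properties (b) and (c) while preserving $\mathcal{R}=\emptyset$ and $q\ge$. One small correction: your Step~A claim that $q(G_{i,p^k,p^\ell})\ge q(G_i)$ for a suitable $(k,\ell)$ is only guaranteed in~\cite{km} for primes $p>10^{2000}$; the finitely many small primes are handled separately (\cite[Proposition~8.3]{km}) with a single bounded quality loss, which is precisely why property~(d) is stated with $\gg$ rather than $\ge$.
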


\begin{prop}\label{prop_goodgcd2}
Let $G= (\mu,\mathcal{V},\mathcal{W},\mathcal{E},\emptyset,f_{\emptyset},g_{\emptyset})$ be a GCD graph with trivial set of primes, and let $C \ge 1$. Assume that
\[ \mathcal{E} \subseteq \left\{(v,w) \in \mathcal{V} \times \mathcal{W}: L_{F(t)}(v,w) \geq \frac{1}{F(t)^{1/4}}\right\} \quad \textrm{and} \quad \delta(G)  \ge \frac{1}{F(t)^{1/2}}\]
with some $t \ge 1$ sufficiently large in terms of $C$. Then there exists a GCD subgraph $G' = (\mu,\mathcal{V}',\mathcal{W}',\mathcal{E}',\mathcal{P}',f',g')$ of $G$ such that
	\begin{enumerate}
		\item $\mathcal{R}(G') = \emptyset$.
		\item For all $v \in \mathcal{V}'$, we have $\mu(\Gamma_{G'}(v)) \geq \frac{9\delta(G')}{10}\mu(\mathcal{W}')$.
		\item For all $w \in \mathcal{W}'$, we have $\mu(\Gamma_{G'}(w)) \geq \frac{9\delta(G')}{10}\mu(\mathcal{V}')$.
		\item One of the following holds:
		\begin{enumerate}
			\item[(i)] $q(G') \gg t^3 q(G)$ with an implied constant depending only on $C$.
			\item[(ii)] $q(G') \gg q(G)$ with an implied constant depending only on $C$, and for any $(v,w) \in \mathcal{E}'$, if we write $v = v'\prod_{p \in \mathcal{P}'}p^{f'(p)}$ and $w = w'\prod_{p \in \mathcal{P}'}p^{g'(p)}$, then $L_{F(t)}(v',w') \geq \frac{1}{2F(t)^{1/4}}$.
		\end{enumerate}
	\end{enumerate}
\end{prop}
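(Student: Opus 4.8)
The plan is to obtain $G'$ by running a modification of the iterative GCD-graph construction of Koukoulopoulos and Maynard that underlies \cite[Proposition~7.1]{km}, and then to read off which of the two alternatives (4)(i), (4)(ii) holds according to how much quality has been gained. Starting from $G$ one alternates two kinds of moves. A \emph{cleaning move} removes low-degree vertices so as to restore the neighbourhood conditions (2) and (3), without decreasing the quality by more than a bounded (eventually $C$-dependent) factor. A \emph{prime-processing move} picks a prime $p\in\mathcal{R}(G)$ and passes to one of the subgraphs $G_{p^k,p^\ell}$: if $p\in\mathcal{R}^{\twonotes}(G)$ one applies a quality-increment lemma in the spirit of \cite{km}, producing a subgraph of strictly larger quality; if $p\in\mathcal{R}(G)\setminus\mathcal{R}^{\twonotes}(G)$ there is, by definition of $\mathcal{R}^{\twonotes}$, a single exponent $k$ carrying more than a $1-1/\sqrt p$ fraction of the $\mu$-mass on both vertex sides simultaneously, and one restricts to $\mathcal{V}_{p^k}\times\mathcal{W}_{p^k}$. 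A concentrated prime is thus always processed \emph{diagonally}, with $f'(p)=g'(p)$, so it never enters $\mathcal{P}_{\text{diff}}(G')$; only an \emph{off-diagonal} step — necessarily a spread-out step, with exponent gap $a=|k-\ell|\ge1$ — enlarges $\mathcal{P}_{\text{diff}}(G')$, and such a step multiplies the quality by an additional factor $p^{a}$ coming from the term $p^{|f(p)-g(p)|}$ in the definition of $q(G)$. Since $\mathcal{R}(G)$ is finite the procedure terminates at a GCD subgraph $G'$ with $\mathcal{R}(G')=\emptyset$ and properties (1)--(3).

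The quantitative input is the bookkeeping estimate
\[
q(G') \;\gg\; q(G)\prod_{p\in\mathcal{P}_{\text{diff}}(G')} p^{\,|f'(p)-g'(p)|},
\]
with implied constant depending only on $C$ — in words, the ``reduced quality'' obtained by dividing $q(\cdot)$ by the exponent-gap factors drops by at most a $C$-dependent factor over the whole run. Granting this, there are two cases. If $q(G')\ge t^{3}q(G)$, alternative (4)(i) holds. Otherwise $q(G')<t^{3}q(G)$, whence $\prod_{p\in\mathcal{P}_{\text{diff}}(G')}p^{|f'(p)-g'(p)|}\ll t^{3}$, so in particular
\[
\#\{p\in\mathcal{P}_{\text{diff}}(G'):p\ge F(t)\}\;\le\;\frac{1}{\log F(t)}\log\!\!\prod_{p\in\mathcal{P}_{\text{diff}}(G')}p^{|f'(p)-g'(p)|}\;\ll\;\frac{\log t}{\log F(t)}.
\]
Therefore $\sum_{p\in\mathcal{P}_{\text{diff}}(G'),\,p\ge F(t)}1/p\le F(t)^{-1}\#\{p\in\mathcal{P}_{\text{diff}}(G'):p\ge F(t)\}\ll(\log t)/(F(t)\log F(t))\le\tfrac12 F(t)^{-1/4}$ once $t$ is large enough in terms of $C$, since $F(t)$ grows faster than any fixed power of $\log t$. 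Finally, for an edge $(v,w)\in\mathcal{E}'\subseteq\mathcal{E}$, writing $v=v'\prod_{p\in\mathcal{P}'}p^{f'(p)}$ and $w=w'\prod_{p\in\mathcal{P}'}p^{g'(p)}$, property (iii) in the definition of a GCD graph forces $v_{p}=f'(p)\ne g'(p)=w_{p}$ and hence $p\nmid v'w'$ whenever $f'(p)\ne g'(p)$, while for $p\in\mathcal{P}'$ with $f'(p)=g'(p)$ the $p$-adic valuation of $\tfrac{v'w'}{\gcd(v',w')^{2}}$ equals that of $\tfrac{vw}{\gcd(v,w)^{2}}$; primes outside $\mathcal{P}'$ are untouched. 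Thus the multiset of prime divisors of $\tfrac{vw}{\gcd(v,w)^2}$ that are removed in passing to $(v',w')$ is exactly $\mathcal{P}_{\text{diff}}(G')$, and
\[
L_{F(t)}(v',w')\;=\;L_{F(t)}(v,w)-\!\!\sum_{\substack{p\in\mathcal{P}_{\text{diff}}(G')\\ p\ge F(t)}}\frac1p\;\ge\;\frac{1}{F(t)^{1/4}}-\frac{1}{2F(t)^{1/4}}\;=\;\frac{1}{2F(t)^{1/4}}
\]
by the hypothesis $L_{F(t)}(v,w)\ge F(t)^{-1/4}$; the bound $q(G')\gg q(G)$ in (4)(ii) is immediate from the bookkeeping estimate since each exponent gap is $\ge1$. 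This establishes (4)(ii).

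The heart of the argument, and the main obstacle, is the bookkeeping estimate: one must show that the cleaning moves and the concentrated-prime moves together cost at most a $C$-dependent factor of quality, and that every off-diagonal spread-out move really gains the full factor $p^{a}$ rather than having it absorbed by a density loss. This is precisely where the weaker threshold $1-1/\sqrt p$ in $\mathcal{R}^{\twonotes}(G)$ (in place of the $1-10^{40}/p$ of \cite{km}) must be paid for: a concentrated prime now need only concentrate a $1-1/\sqrt p$ fraction of its mass, so restricting to the dominant exponent can a priori lose up to that fraction of the vertex mass per side. When $p$ is large compared with the reciprocal of the current edge density this is harmless — the neighbourhood regularity maintained by the cleaning moves keeps the density essentially unchanged under such a restriction, and the factor $(1-\mathbbm{1}_{f(p)=g(p)\ge1}/p)^{-2}$ in the quality function absorbs the remainder — while the primes for which it is \emph{not} harmless turn out to be few enough that their aggregate cost is bounded in terms of $C$. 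Carrying out this density bookkeeping along the iteration so that the two regimes fit together, while re-using as much of the machinery of \cite{km} as possible, is what occupies the final two sections; the admissible shape of the function $F$ (hence the exponents $1/4$ and $1/2$ in the hypotheses) is dictated exactly by this balance and by the sieve-theoretic error terms entering the overlap estimate of Lemma~\ref{lemma_over}.
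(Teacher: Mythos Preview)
Your reduction of the proposition to the ``bookkeeping estimate''
\[
q(G') \;\gg_C\; q(G)\prod_{p\in\mathcal{P}_{\text{diff}}(G')} p^{\,|f'(p)-g'(p)|}
\]
for the \emph{final} subgraph $G'$, and your derivation of the dichotomy (4)(i)/(4)(ii) from it, are clean and correct. The gap lies in how you propose to obtain this estimate. You commit to processing every concentrated prime $p\in\mathcal{R}(G)\setminus\mathcal{R}^{\twonotes}(G)$ diagonally, so that it never enters $\mathcal{P}_{\text{diff}}(G')$, and then argue heuristically that the aggregate quality cost of these diagonal restrictions is $O_C(1)$. But with the threshold $1-1/\sqrt{p}$, restricting to $\mathcal{V}_{p^k}\times\mathcal{W}_{p^k}$ only guarantees retaining a $1-O(1/\sqrt{p})$ fraction of the vertex mass on each side; the edge mass retained is at best $1-O(1/(\sqrt{p}\,\delta))$, which is uncontrolled whenever $\delta\ll 1/\sqrt{p}$. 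Since $\sum_p 1/\sqrt{p}$ diverges, there is no straightforward way to bound the product of these losses, and the $(1-\mathbbm{1}_{f=g\ge1}/p)^{-2}$ factor in the quality compensates only for a loss of size $O(1/p)$, not $O(1/\sqrt{p})$. The \cite{km} machinery that does give non-decreasing quality for concentrated primes uses their stricter threshold $1-10^{40}/p$, not yours.

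The paper avoids this difficulty by \emph{not} attempting to force concentrated primes to go diagonal, and by not tracking $\mathcal{P}_{\text{diff}}$ all the way to the end. It proceeds in three separate stages. First, iterate only on $\mathcal{R}^{\twonotes}$-primes (Lemma~\ref{quality_density_lemma}) to reach $G^{(1)}$ with $\mathcal{R}^{\twonotes}(G^{(1)})=\emptyset$; here one gets either $q(G^{(1)})\ge t^3 q(G)$, or the package $q(G^{(1)})\gg q(G)$, $\delta(G^{(1)})/\delta(G)\ge F(t)^{-1/4}$, $|\mathcal{P}_{\text{diff}}(G^{(1)})|\le\log t$. Second --- and this is the idea missing from your sketch --- before processing the remaining (concentrated) primes, pass to a sub-edge-set via a Markov-type averaging argument (Lemma~\ref{lem84}): since every $p\in\mathcal{R}(G^{(1)})$ has both sides concentrated to within $1/\sqrt{p}$, the set of edges $(v,w)$ with $v_p(v)\neq v_p(w)$ has $\mu$-mass $\le 2\mu(\mathcal{V})\mu(\mathcal{W})/\sqrt{p}$, so on average the total contribution of $\mathcal{R}(G^{(1)})$-primes to $L_{F(t)}$ is small, and one can discard the atypical edges at the cost of halving the quality. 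This excises the $\mathcal{R}(G^{(1)})$-primes from the $L_{F(t)}$-sum \emph{without processing them}. Third, process all remaining primes with the standard \cite{km} iteration (Lemma~\ref{empty_R}); these steps may well be off-diagonal and may enlarge $\mathcal{P}_{\text{diff}}$, but this no longer matters, since every new prime added to $\mathcal{P}'$ lies in $\mathcal{R}(G^{(1)})$ and has already been removed from the sum. The $L_{F(t)}(v',w')$ bound then follows from the residual sum over primes outside $\mathcal{R}(G^{(1)})\cup\mathcal{P}_{\text{diff}}(G^{(1)})$, exactly as in Stage~4b of \cite[Proof of Proposition~7.1]{km}.
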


\begin{proof}[Proof of Proposition~\ref{prop_secondmoment1}]
Let $\psi: \mathbb{N} \to [0,1/2]$ be a function, let $Q \in \mathbb{N}$ and let $t \ge 1$. Consider the GCD graph $G=(\mu, \mathcal{V}, \mathcal{W}, \mathcal{E}, \emptyset, f_{\emptyset}, g_{\emptyset})$ with the measure $\mu(v)=\frac{\varphi(v)\psi(v)}{v}$, the vertex sets $\mathcal{V}=\mathcal{W}=[1,Q]^2$, and the edge set
\[ \mathcal{E}= \left\{ (v,w) \in [1,Q]^2 \, : \, D(v,w) \le \frac{\Psi(Q)}{t} \right\} . \]
Note that $\mu(\mathcal{V})=\mu(\mathcal{W})=\Psi(Q)/2$. In the language of GCD graphs, the claim of Proposition \ref{prop_secondmoment1} can equivalently be written as $\mu (\mathcal{E}) \ll \Psi(Q)^2 /t^{1/5}$, that is, $\delta(G) \ll t^{-1/5}$.

By Proposition~\ref{prop_goodgcd1}, there exists a GCD subgraph $G'=(\mu, \mathcal{V}', \mathcal{W}', \mathcal{E}', \mathcal{P}', f', g')$ of $G$ having properties a)--d) of the proposition. Following the steps in~\cite[Proof of Proposition 6.3 assuming Proposition 7.1]{km}, from properties a)--c) we deduce $q(G') \ll \Psi(Q)^2/t^2$. Since $G$ has trivial set of primes, by the definition of quality and property d),
\[ \delta(G)^{10} \mu (\mathcal{V}) \mu (\mathcal{W}) = q(G) \ll q(G') \ll \frac{\Psi(Q)^2}{t^2} . \]
Therefore $\delta(G) \ll t^{-1/5}$, as claimed.
\end{proof}

For the proof of Proposition~\ref{prop_secondmoment2} we will need the following fact about the ``anatomy of integers''; compare this to~\cite[Lemma 7.3]{km}, which is a similar result for a fixed value of $c$ on the right-hand side, rather than allowing $c \to 0$ as in view of Lemma~\ref{lemma_over} above will be necessary for our application.
\begin{lemma}\label{lemma_anatomy}
For any real $x,t \geq 1$ and $0<c \le 1$,
$$
\bigg| \bigg\{n \leq x \, : \, \sum_{\substack{p \mid n,\\ p \geq t}} \frac{1}{p} \geq c \bigg\} \bigg| \ll xe^{- 100 ct}
$$
with an absolute implied constant.
\end{lemma}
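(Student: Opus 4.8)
The plan is to bound the counting function via a Rankin-type (exponential moment) argument. Fix $x, t \geq 1$ and $0 < c \leq 1$, and let $\omega_t(n) = \sum_{p \mid n,\ p \geq t} 1/p$ denote the quantity on the left-hand side. The idea is to choose a parameter $\beta > 0$ and observe
$$
\bigg| \bigg\{ n \leq x \,:\, \omega_t(n) \geq c \bigg\} \bigg| \;\leq\; e^{-\beta c} \sum_{n \leq x} e^{\beta \omega_t(n)}.
$$
The function $n \mapsto e^{\beta \omega_t(n)}$ is not multiplicative, but it is dominated by a multiplicative function: since $\omega_t(n) = \sum_{p \mid n,\ p \geq t} 1/p$, we have $e^{\beta \omega_t(n)} = \prod_{p \mid n,\ p \geq t} e^{\beta/p} \leq \prod_{p \mid n,\ p \geq t}\bigl(1 + \tfrac{2\beta}{p}\bigr)$ provided $\beta/p$ is bounded, say $\beta \leq t$ (so that $\beta/p \leq 1$ for $p \geq t$ and $e^{\beta/p} \leq 1 + 2\beta/p$). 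Hence it suffices to bound $\sum_{n \leq x} h(n)$ where $h$ is the multiplicative function with $h(p^a) = 1 + 2\beta/p$ for $p \geq t$ and $h(p^a) = 1$ otherwise.

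First I would estimate this sum using the standard upper bound for mean values of non-negative multiplicative functions (as already invoked in the proof of Lemma~\ref{lemma_mean_value}, e.g.\ \cite[Theorem 14.2]{kouk}): one gets
$$
\sum_{n \leq x} h(n) \;\ll\; x \exp\bigg( \sum_{p \leq x} \frac{h(p) - 1}{p} \bigg) \;=\; x \exp\bigg( \sum_{t \leq p \leq x} \frac{2\beta}{p^2} \bigg) \;\ll\; x \exp\bigg( \frac{2\beta}{t \log t} \cdot C' \bigg) \;\ll\; x,
$$
where I have used $\sum_{p \geq t} 1/p^2 \ll 1/(t\log t)$ (trivially $\ll 1/t$ suffices) and $\beta \leq t$, so the exponential is $O(1)$. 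Actually one must be slightly careful: the term in the exponent is $\sum_{t \le p \le x} (h(p)-1)/p = \sum_{t \le p \le x} 2\beta/p^2$, and since $\beta \le t$ this is $\le 2t \sum_{p \ge t} p^{-2} \ll 1$. Thus $\sum_{n \leq x} h(n) \ll x$ with an absolute implied constant, uniformly for $\beta \leq t$.

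Combining the two displays gives $\bigl|\{n \leq x : \omega_t(n) \geq c\}\bigr| \ll x e^{-\beta c}$ for every $\beta \in (0, t]$. Taking $\beta = t$ (its largest allowed value) yields the claimed bound $\ll x e^{-ct}$ — but wait, the lemma asserts the stronger $\ll x e^{-100 ct}$. To recover the constant $100$, I would instead allow $e^{\beta/p} \leq 1 + \tfrac{\beta}{p}(1 + \varepsilon)$ for $\beta/p$ small and, more importantly, push $\beta$ larger: the restriction $\beta \leq t$ was only imposed so that $e^{\beta/p} \leq 1 + O(\beta/p)$ with a controlled constant. Allowing $\beta$ up to $Kt$ for a fixed large $K$ costs only an absolute constant $e^{\beta/p} \le e^{K} \le (p/t)^{K}$-type bound in the exponent sum $\sum_{p \ge t}(h(p)-1)/p$, which remains $O_K(1)$ because the sum over $p \ge t$ of $1/p^2$ beats any fixed power; so the prefactor stays $\ll_K x$, while the gain $e^{-\beta c} = e^{-Kct}$ has the desired shape after choosing $K = 100$. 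The main (minor) obstacle is thus purely bookkeeping: verifying that the exponent sum $\sum_{t \le p \le x} (h(p)-1)/p$ stays bounded by an absolute constant when $h(p) = 1 + \beta/p$ with $\beta$ as large as $100 t$, which follows from $\sum_{p \ge t} \beta/p^2 \ll \beta/t \ll 1$. No deep input is needed beyond the elementary mean-value bound for multiplicative functions and Mertens-type estimates.
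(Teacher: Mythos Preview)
Your proof is correct and follows essentially the same route as the paper: an exponential-moment (Markov) argument combined with the mean-value bound \cite[Theorem~14.2]{kouk} for the multiplicative function $n \mapsto \prod_{p\mid n,\, p\ge t} e^{\beta/p}$, with $\beta = 100t$. The paper simply sets $\beta = 100t$ from the outset and works directly with $f(p^m) = e^{100t/p}$ (noting $f(p^m)\le e^{100}$ so that the hypothesis of the mean-value theorem is met, and $e^{100t/p}-1 = O(t/p)$), which avoids your detour through $\beta\le t$ and the slightly garbled last paragraph where you write $h(p)=1+\beta/p$ instead of $e^{\beta/p}$; but the substance is identical.
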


\begin{proof}
An application of the Markov inequality gives
\[ \begin{split} \bigg| \bigg\{ n \le x \, : \, \sum_{\substack{p \mid n, \\ p \ge t}} \frac{1}{p} \ge c \bigg\} \bigg| &= \bigg| \bigg\{ n \le x \, : \, \exp \bigg( 100t \sum_{\substack{p \mid n, \\ p \ge t}} \frac{1}{p} \bigg) \ge \exp \left( 100ct \right) \bigg\} \bigg| \\ &\le e^{- 100ct} \sum_{n \le x} \prod_{\substack{p \mid n, \\ p \ge t}} e^{100t/p}
.\end{split} \]
Now let $f$ be the multiplicative function defined at prime powers as $f(p^m) = e^{100 t/p}$ if $p \ge t$, and $f(p^m)=1$ if $p<t$. Note that $f(p^m) \le e^{100}$ at all prime powers. Hence by~\cite[Theorem 14.2]{kouk} the partial sums of $f$ satisfy
\[ \begin{split} \sum_{n \le x} \prod_{\substack{p \mid n, \\ p \ge t}} e^{100t/p} = \sum_{n \le x} f(n) \ll x \exp \left( \sum_{p \le x} \frac{f(p)-1}{p} \right) &= x \exp \left( \sum_{t \le p \le x} \frac{e^{100t/p}-1}{p} \right) \\ &= x \exp \left( O \left( \sum_{p \ge t} \frac{t}{p^2} \right) \right) \\ &\ll x, \end{split} \]
where the implied constants are absolute.
\end{proof}

\begin{proof}[Proof of Proposition~\ref{prop_secondmoment2}]
Let $\psi: \mathbb{N} \to [0,1/2]$ be a function, let $Q \in \mathbb{N}$ and let $t \ge 1$. Consider the GCD graph $G=(\mu, \mathcal{V}, \mathcal{W}, \mathcal{E}, \emptyset, f_{\emptyset}, g_{\emptyset})$ with the measure $\mu(v)=\frac{\varphi(v)\psi(v)}{v}$, the vertex sets $\mathcal{V}=\mathcal{W}=[1,Q]^2$, and the edge set
\[ \mathcal{E}= \left\{ (v,w) \in [1,Q]^2 \, : \, D(v,w) \le t \Psi(Q) \quad \textrm{and} \quad L_{F(t)}(v,w) \ge \frac{1}{F(t)^{1/4}} \right\} . \]
Note that $\mu(\mathcal{V})=\mu(\mathcal{W})=\Psi(Q)/2$. In the language of GCD graphs, the claim can equivalently be written as $\mu (\mathcal{E}) \ll \Psi(Q)^2/F(t)^{1/2}$, that is, $\delta(G) \ll F(t)^{-1/2}$. We may assume in the sequel that $\delta(G) \ge F(t)^{-1/2}$ and that $t$ and $F(t)$ are large enough in terms of $C$, since otherwise the claim trivially holds.

By Proposition~\ref{prop_goodgcd2}, there exists a GCD subgraph $G'=(\mu, \mathcal{V}', \mathcal{W}', \mathcal{E}', \mathcal{P}', f', g')$ of $G$ having properties a)--d) of the proposition. Let $a=\prod_{p \in \mathcal{P}'}p^{f'(p)}$ and $b=\prod_{p \in \mathcal{P}'}p^{g'(p)}$. By the definition of a GCD graph, $a \mid v$ for all $v \in \mathcal{V}'$ and $b \mid w$ for all $w \in \mathcal{W}'$. Since $\mathcal{R}(G')=\emptyset$, we also have $\gcd (v,w)=\gcd (a,b)$ for all $(v,w) \in \mathcal{E}'$. Following the steps in~\cite[Proof of Proposition 6.3 assuming Proposition 7.1]{km}, we deduce from properties a)--c) of Proposition~\ref{prop_goodgcd2} that
\begin{equation}\label{qG'upperboundfromkm}
q(G') \ll ab \Psi(Q)^2 t^2 \sum_{(v,w) \in \mathcal{E}'} \frac{1}{w_0 v_{\max}(w)} \le \Psi (Q)^2 t^2 ,
\end{equation}
where $w_0=\max \mathcal{W}'$ and $v_{\max}(w)=\max \{ v \in \mathcal{V}' \, : \, (v,w) \in \mathcal{E}' \}$.

Assume first that $G'$ satisfies property d)(i) in Proposition~\ref{prop_goodgcd2}, that is, $q(G') \gg t^3 q(G)$. Since $G$ has trivial set of primes, by the definition of quality and \eqref{qG'upperboundfromkm} we obtain
\[ \delta(G)^{10} \mu (\mathcal{V}) \mu (\mathcal{W}) = q(G) \ll t^{-3} q(G') \ll \frac{\Psi(Q)^2}{t}. \]
Therefore $\delta(G) \ll t^{-1/10} \ll F(t)^{-1/2}$, as claimed.

Assume next, that $G'$ satisfies property d)(ii) in Proposition~\ref{prop_goodgcd2}, that is, $q(G') \gg q(G)$, and for any $(v,w) \in \mathcal{E}'$, if we write $v = av'$ and $w = bw'$, then $L_{F(t)}(v',w') \geq \frac{1}{2F(t)^{1/4}}$. Note that here $\gcd (v',w')=1$. As in the first case, we have
\[ \begin{split} \delta(G)^{10} \mu (\mathcal{V}) \mu (\mathcal{W}) = q(G) \ll q(G') &\ll ab \Psi(Q)^2 t^2 \sum_{(v,w) \in \mathcal{E}'} \frac{1}{w_0 v_{\max}(w)} \\ &\le \frac{ab \Psi(Q)^2 t^2}{w_0} \sum_{1 \le w' \le w_0/b} \frac{1}{v_{\max}(bw')} \sum_{\substack{1 \le v' \le v_{\max}(bw')/a, \\ L_{F(t)}(v',w') \ge 1/(2F(t)^{1/4})}} 1 . \end{split} \]
For the sake of readability, define $R_s(n)=\sum_{p \mid n,~p \ge s} 1/p$ for any $n \in \mathbb{N}$ and $s \ge 1$. Then $1/(2F(t)^{1/4}) \le L_{F(t)}(v',w') = R_{F(t)}(v') + R_{F(t)}(w')$ implies that $R_{F(t)}(v') \ge 1/(4F(t)^{1/4})$ or $R_{F(t)}(w') \ge 1/(4F(t)^{1/4})$. The previous formula thus shows that $\delta (G)^{10} \ll S_1+S_2$ with
\[ \begin{split} S_1 &= \frac{ab t^2}{w_0} \sum_{1 \le w' \le w_0/b} \frac{1}{v_{\max}(bw')} \sum_{\substack{1 \le v' \le v_{\max}(bw')/a, \\ R_{F(t)}(v') \ge 1/(4F(t)^{1/4})}} 1, \\ S_2 &= \frac{ab t^2}{w_0} \sum_{\substack{1 \le w' \le w_0/b, \\ R_{F(t)}(w') \ge 1/(4F(t)^{1/4})}} \frac{1}{v_{\max}(bw')} \sum_{1 \le v' \le v_{\max}(bw')/a} 1 . \end{split} \]
An application of Lemma~\ref{lemma_anatomy} with $x=v_{\max}(bw')/a$ and $c=1/(4F(t)^{1/4})$ yields
\[ S_1 \ll \frac{bt^2}{w_0} \sum_{1 \le w' \le w_0/b} \exp \left( -25 F(t)^{3/4} \right) = t^2 \exp \left( -25 F(t)^{3/4} \right) \ll t^{-100} . \]
Another application of Lemma~\ref{lemma_anatomy} with $x=w_0/b$ and $c=1/(4F(t)^{1/4})$ similarly yields
\[ S_2 = \frac{bt^2}{w_0} \sum_{\substack{1 \le w' \le w_0/b, \\ R_{F(t)}(w') \ge 1/(4F(t)^{1/4})}} 1 \ll t^2 \exp \left( -25 F(t)^{3/4} \right) \ll t^{-100}. \]
Therefore $\delta(G) \ll (S_1+S_2)^{1/10} \ll t^{-10} \ll F(t)^{-10}$, as claimed.
\end{proof}

\section{Four technical lemmas} \label{sec_7}

In this section, we state four lemmas on GCD subgraphs, and show that Propositions~\ref{prop_goodgcd1} and~\ref{prop_goodgcd2} follow from these four lemmas. The key technical improvement in comparison with the iteration argument of~\cite{km} is in Lemma~\ref{quality_density_lemma} below, which more carefully balances the quality gain versus the potential density loss of the iteration procedure. The ratio of quality gain vs.\ density loss which is necessary for the proof of Theorem~\ref{th2} is determined by the range of admissible parameters $u$ and $A$ in Lemma~\ref{lemma_over}, and what Lemma~\ref{quality_density_lemma} provides is just enough for a successful completion of the proof. Lemma~\ref{lem84}, which should be compared to~\cite[Lemma 8.4]{km}, and Lemma~\ref{empty_R} follow from results in~\cite{km} in a more or less straightforward way. Finally, for the convenience of the reader, we cite~\cite[Lemma 8.5]{km} in the form of Lemma~\ref{lem85}.

\begin{lemma}\label{quality_density_lemma}
	Let $G = (\mu,\mathcal{V},\mathcal{W},\mathcal{E},\emptyset,f_{\emptyset},g_{\emptyset})$ be a GCD graph with trivial set of primes and $\delta(G) > 0$. Let $C \ge 1$, and let $t \ge 1$ be sufficiently large in terms of $C$. Then there exists a GCD subgraph $G' \preceq G$ such that $R^{\twonotes}(G') = \emptyset$, and at least one of the following two statements holds:
	\begin{enumerate}
		\item $q(G') \ge t^3 q(G)$.
		\item $q(G') \gg q(G), \quad \frac{\delta(G')}{\delta(G)} \ge \frac{1}{F(t)^{1/4}} ,\quad \lvert \mathcal{P}_{\text{diff}}(G')\rvert \le \log t$ with an implied constant depending only on $C$.
	\end{enumerate}
\end{lemma}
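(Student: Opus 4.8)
The plan is to build $G'$ by an iterative ``cleaning'' procedure that mimics the iteration of Koukoulopoulos--Maynard, but with a modified stopping rule that tracks the quantity $F(t)$. First I would set up a potential/quality-monotone iteration: starting from $G_0 = G$, at each step I look for a prime $p \in \mathcal{R}^{\twonotes}(G_i)$; for such a prime, by definition there is some $k \ge 0$ with $\mu(\mathcal{V}_{p^k})/\mu(\mathcal{V}) \le 1 - 1/\sqrt{p}$ \emph{and} $\mu(\mathcal{W}_{p^\ell})/\mu(\mathcal{W}) \le 1 - 1/\sqrt{p}$ for the relevant exponents, and I pass to one of the subgraphs $G_{p^k,p^\ell} \preceq G_i$ chosen (via a pigeonhole/averaging argument over the possible exponent pairs, exactly as in \cite{km}) so that the quality does not drop. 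The crucial bookkeeping is: each such step either (a) increases $|\mathcal{P}_{\text{diff}}|$ by one and multiplies the quality by a factor of size roughly $p^{|f(p)-g(p)|}/(1-p^{-31/30})^{10} \gg p$ (when $f(p) \ne g(p)$), or (b) leaves $f(p)=g(p)$, in which case the relevant quality factor is $1/(1-1/p)^2 \ge 1$ and the edge density drops by at most a factor $(1-1/\sqrt p)^{\text{const}}$ in the worst branch — this is where the density loss accumulates. I would run the iteration until $\mathcal{R}^{\twonotes}$ is empty (this terminates because $\mathcal{R}(G_i)$ is finite and strictly shrinks, since $p$ is removed from $\mathcal{R}$ once it enters $\mathcal{P}$).

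Next I would analyze the two regimes at termination. If during the iteration the total accumulated quality gain from the ``different exponent'' steps (regime (a)) ever reaches the threshold $t^3$, I stop early and output that graph: conclusion (1) holds, $q(G') \ge t^3 q(G)$, and emptiness of $\mathcal{R}^{\twonotes}$ can be arranged by continuing the iteration only through steps that do not decrease quality (any further regime-(a) step only helps, and regime-(b) steps keep quality $\gg q(G)$ while still reducing $\mathcal{R}$). If the threshold is never reached, then $\prod_{p \in \mathcal{P}_{\text{diff}}(G')} p \ll t^3$, which by the prime number theorem (crudely: a product of distinct primes bounded by $t^3$ has at most $O(\log t / \log\log t) \le \log t$ factors for $t$ large) gives $|\mathcal{P}_{\text{diff}}(G')| \le \log t$. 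In this regime I must also control the density loss: the total loss is $\prod_p (1 - 1/\sqrt p)^{-c}$ over the primes $p$ adjoined in regime-(b) steps, and since those primes are distinct and — here is the key point — the quality factor $\prod (1-1/p)^{-2}$ they contribute is $\ll 1$ only if $\sum 1/p$ over them is bounded, I need to argue that $F(t)$-worth of density loss is the worst that can happen. Concretely: either all regime-(b) primes are $\ge$ some bound making $\sum 1/\sqrt p$ small, or there are few small ones; balancing the admissible total $\sum_{\text{reg (b) primes}} 1/\sqrt{p}$ against the requirement $q(G') \gg q(G)$ forces the density ratio to stay above $F(t)^{-1/4}$. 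This calibration — showing the iteration's density loss is bounded by exactly $F(t)^{1/4}$ and not more — is the content that \emph{requires} the specific shape of $F$ in \eqref{F_def}, and is the main obstacle.

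To execute this cleanly I would isolate the two ``building block'' operations as separate citable lemmas (this is presumably what Lemmas~\ref{lem84}, \ref{empty_R}, \ref{lem85} referenced after this statement are for): one lemma giving, for $p \in \mathcal{R}^{\twonotes}(G)$, a subgraph in $\{G_{p^k,p^\ell}\}$ with quality $\ge q(G)$ and a quantitative lower bound on how much the quality grows as a function of whether $f(p) \ne g(p)$; and one ``termination'' lemma stating that repeatedly removing primes from $\mathcal{R}^{\twonotes}$ without quality loss eventually reaches $\mathcal{R}^{\twonotes} = \emptyset$. Then the present lemma is the ``driver'': run the block operations, watch the running product of the regime-(a) quality factors, branch at threshold $t^3$. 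The density-loss estimate in the non-threshold regime would be extracted from the relation $q(G') \gg q(G)$ by solving for $\delta(G')/\delta(G)$ in terms of $\prod_{p \in \mathcal{P}'} (\text{quality factors})$, using $\prod_{p \in \mathcal{P}_{\text{diff}}} p \ll t^3$ to bound the ``gain'' side and hence bound below the ``loss'' side, arriving at $\delta(G')/\delta(G) \ge F(t)^{-1/4}$ for $t$ large in terms of $C$.

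\bigskip

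I expect the main obstacle to be the precise density-loss accounting in case (2): one has to show that passing from $q(G) \ll q(G')$ and $\prod_{p \in \mathcal{P}_{\text{diff}}(G')} p^{|f(p)-g(p)|} \ll t^3$ through the definition of quality yields $\delta(G')^{10} \gg \delta(G)^{10} / t^3 \cdot (\text{corrections})$, and that the correction factors coming from the $(1 - \mathbbm{1}_{f(p)=g(p)\ge 1}/p)^{-2}$ and $(1-p^{-31/30})^{-10}$ terms over \emph{all} primes in $\mathcal{P}'$ (not just $\mathcal{P}_{\text{diff}}$) stay bounded — which is exactly where the $\mathcal{R}^{\twonotes}$ condition and the specific super-polynomial growth rate of $F$ are needed, since a naive bound would only give a loss of size a power of $t$ rather than $F(t)^{1/4}$.
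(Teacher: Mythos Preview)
Your overall skeleton --- iterate by adjoining primes from $\mathcal{R}^{\twonotes}$ until it empties, track quality, and split according to whether the accumulated quality gain passes the threshold $t^3$ --- matches the paper's approach. But your bookkeeping of \emph{where the density loss occurs} is backwards, and this is the heart of the lemma.

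You write that regime (b) (the $f(p)=g(p)$ steps) is ``where the density loss accumulates,'' with a loss of $(1-1/\sqrt p)^{\mathrm{const}}$ per step, and you then try to control $\sum_{\text{regime (b)}} 1/\sqrt p$. In fact the pigeonhole argument (Lemma~\ref{lemma121}, modelled on \cite[Lemma 12.1]{km}) can be arranged so that when the chosen pair has $k=\ell$ one gets $\mu(\mathcal{E}_{p^k,p^k})/\mu(\mathcal{E}) \ge (\alpha_k\beta_k)^{9/10}$, hence $\delta(G_{p^k,p^k})/\delta(G) \ge (\alpha_k\beta_k)^{-1/10} \ge 1$: \emph{no} density loss in regime (b). All of the density loss sits in the $k\neq \ell$ steps, and there the loss per step is $\delta(G')/\delta(G) \ge 1/(20|k-\ell|^2)$ --- a factor depending on the exponent gap, not on $p$. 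Your proposed control via $\sum 1/\sqrt p$ over regime-(b) primes therefore targets the wrong quantity and cannot close; there is no a priori bound on the number of regime-(b) primes, and each contributes a quality factor $\ge 1$, so $q(G')\gg q(G)$ gives you nothing there.

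The actual obstacle is the following optimisation: writing $N=|\mathcal{P}_{\mathrm{diff}}(G')|$ and $m_p=|k_p-\ell_p|\ge 1$, the non-threshold assumption $q(G')<t^3 q(G)$ forces $\sum_{p\in\mathcal{P}_{\mathrm{diff}}} m_p\log p \ll \log t$, and one must show that $\sum_{p\in\mathcal{P}_{\mathrm{diff}}} \log(20 m_p^2) \le \tfrac14\log F(t)$. The paper does this by splitting $\mathcal{P}_{\mathrm{diff}}$ at the level $m_p \gtrless \log\log\log t$: for small $m_p$ one uses $N\ll (\log t)/\log\log t$ directly, and for large $m_p$ one bounds the cardinality of that piece and then applies AM--GM to $\sum \log m_p$. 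It is precisely this balancing that produces the shape of $F$ in \eqref{F_def}; your final paragraph gestures at extracting $\delta(G')/\delta(G)$ from the quality identity, but that route is blocked by the uncontrolled factor $\mu(\mathcal{V}')\mu(\mathcal{W}')/\mu(\mathcal{V})\mu(\mathcal{W})$, which is why the step-by-step density tracking (Lemma~\ref{lemma122}) is indispensable.
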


\begin{lemma}\label{lem84}
	Let $G= (\mu,\mathcal{V},\mathcal{W},\mathcal{E},\mathcal{P},f,g)$ be a GCD graph. Assume that
	\[\delta(G) \geq \frac{1}{s^{1/4}},\quad \mathcal{R}^{\twonotes}(G) = \emptyset, \quad 
	\mathcal{E} \subseteq \left\{(v,w) \in \mathcal{V} \times \mathcal{W}: L_{s}(v,w) \geq \frac{1}{s^{1/4}}\right\} \]
	with a sufficiently large $s \ge 1$. Then there exists a GCD subgraph $G' = (\mu,\mathcal{V},\mathcal{W},\mathcal{E}',\mathcal{P},f,g)$ of $G$ such that
	\[ q(G') \geq \frac{q(G)}{2} \quad \text{and} \quad \mathcal{E}' \subseteq \Bigg\{(v,w) \in \mathcal{V} \times \mathcal{W}: \sum_{\substack{p \mid \frac{vw}{\gcd(v,w)^2},\\ p \geq s, \,\, p \notin \mathcal{R}(G)}} \frac{1}{p} \geq \frac{3}{4s^{1/4}}\Bigg\}.\]
\end{lemma}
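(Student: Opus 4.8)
The plan is to delete from $\mathcal{E}$ exactly those edges $(v,w)$ for which too large a share of the prime mass counted by $L_s(v,w)$ is carried by primes lying in $\mathcal{R}(G)$, and then to verify, using the hypothesis $\mathcal{R}^{\twonotes}(G)=\emptyset$, that only a negligible fraction of the edge mass $\mu(\mathcal{E})$ is discarded. Concretely, I would take $G'=(\mu,\mathcal{V},\mathcal{W},\mathcal{E}',\mathcal{P},f,g)$ with
\[
\mathcal{E}'=\Bigg\{(v,w)\in\mathcal{E}\ :\ \sum_{\substack{p\mid\frac{vw}{\gcd(v,w)^2},\\ p\ge s,\ p\notin\mathcal{R}(G)}}\frac1p\ \ge\ \frac{3}{4s^{1/4}}\Bigg\},
\]
which is automatically a GCD subgraph of $G$ since only the edge set shrinks, and which trivially satisfies the inclusion $\mathcal{E}'\subseteq\{(v,w):\sum_{p\mid vw/\gcd(v,w)^2,\,p\ge s,\,p\notin\mathcal{R}(G)}1/p\ge 3/(4s^{1/4})\}$ demanded in the conclusion. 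Because $\mathcal{V},\mathcal{W},\mathcal{P},f,g$ are unchanged, $q(G')/q(G)=(\delta(G')/\delta(G))^{10}=(\mu(\mathcal{E}')/\mu(\mathcal{E}))^{10}$, so the whole problem reduces to showing $\mu(\mathcal{E}\setminus\mathcal{E}')\le(1-2^{-1/10})\mu(\mathcal{E})$, which forces $q(G')\ge q(G)/2$.

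Next I would use the elementary identity that $p\mid vw/\gcd(v,w)^2$ precisely when $v_p(v)\ne v_p(w)$ (with $v_p$ the $p$-adic valuation), together with $L_s(v,w)\ge s^{-1/4}$ for every $(v,w)\in\mathcal{E}$: splitting $L_s(v,w)$ according to whether $p\in\mathcal{R}(G)$, each discarded edge $(v,w)\in\mathcal{E}\setminus\mathcal{E}'$ satisfies $\sum_{p\in\mathcal{R}(G),\,p\ge s,\,v_p(v)\ne v_p(w)}1/p>\tfrac14 s^{-1/4}$. The role of $\mathcal{R}^{\twonotes}(G)=\emptyset$ is that every $p\in\mathcal{R}(G)$ then has a \emph{concentrated} valuation on both sides: there is $k_p\ge 0$ with $\mu(\mathcal{V}_{p^{k_p}})>(1-p^{-1/2})\mu(\mathcal{V})$ and $\mu(\mathcal{W}_{p^{k_p}})>(1-p^{-1/2})\mu(\mathcal{W})$. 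Since $\{(v,w):v_p(v)\ne v_p(w)\}\subseteq(\mathcal{V}\setminus\mathcal{V}_{p^{k_p}})\times\mathcal{W}\,\cup\,\mathcal{V}\times(\mathcal{W}\setminus\mathcal{W}_{p^{k_p}})$, this gives $\mu(\{(v,w):v_p(v)\ne v_p(w)\})<\tfrac{2}{\sqrt p}\mu(\mathcal{V})\mu(\mathcal{W})$. A Markov inequality applied to the lower bound above (with the measure $\mu\times\mu$), followed by summation over $p\in\mathcal{R}(G)$ with $p\ge s$, then yields
\[
\mu(\mathcal{E}\setminus\mathcal{E}')\ \le\ 4s^{1/4}\sum_{\substack{p\in\mathcal{R}(G),\\ p\ge s}}\frac1p\,\mu\big(\{(v,w):v_p(v)\ne v_p(w)\}\big)\ \le\ 8s^{1/4}\,\mu(\mathcal{V})\mu(\mathcal{W})\sum_{p\ge s}\frac1{p^{3/2}}.
\]

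To finish I would compare this with $\mu(\mathcal{E})=\delta(G)\mu(\mathcal{V})\mu(\mathcal{W})\ge s^{-1/4}\mu(\mathcal{V})\mu(\mathcal{W})$, so that it suffices to have $8s^{1/2}\sum_{p\ge s}p^{-3/2}\le 1-2^{-1/10}$. This is the one spot where some care is required, and the main obstacle: the trivial bound $\sum_{n\ge s}n^{-3/2}\ll s^{-1/2}$ is \emph{useless here}, since it only gives $8s^{1/2}\sum_{p\ge s}p^{-3/2}\ll 1$ with no decay, whereas $\mathcal{E}$ is only known to have density $\ge s^{-1/4}$, so the discarded mass must be shown to be a genuinely small \emph{fraction} of $\mu(\mathcal{E})$. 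One must instead exploit the thinness of the primes: partial summation together with the Chebyshev bound $\pi(x)\ll x/\log x$ gives $\sum_{p\ge s}p^{-3/2}\ll(\sqrt s\,\log s)^{-1}$, hence $8s^{1/2}\sum_{p\ge s}p^{-3/2}\ll 1/\log s$, which drops below $1-2^{-1/10}$ once $s$ exceeds an absolute constant — precisely the ``sufficiently large $s$'' in the hypothesis. This yields $\mu(\mathcal{E}')\ge 2^{-1/10}\mu(\mathcal{E})$ and therefore $q(G')\ge q(G)/2$, completing the argument.
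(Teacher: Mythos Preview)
Your argument is correct and is essentially the paper's own proof: you define the same edge set $\mathcal{E}'$ (the paper phrases it dually as $\{(v,w)\in\mathcal{E}:S(v,w)\le\tfrac14 s^{-1/4}\}$ with $S$ the contribution from $p\in\mathcal{R}(G)$, which by $L_s=S+T\ge s^{-1/4}$ gives the same discarded set), use $\mathcal{R}^{\twonotes}(G)=\emptyset$ in the same way to get the $2p^{-3/2}$ saving per prime, and then apply Markov. Your explicit remark that the integer tail bound $\sum_{n\ge s}n^{-3/2}\ll s^{-1/2}$ is insufficient and that one needs the Chebyshev saving $\sum_{p\ge s}p^{-3/2}\ll(\sqrt s\log s)^{-1}$ makes transparent exactly why the ``sufficiently large $s$'' hypothesis is needed; the paper uses the same estimate (writing $\sum_{p\ge s}2p^{-3/2}\mu(\mathcal{V})\mu(\mathcal{W})\le\mu(\mathcal{E})/(100s^{1/4})$ for large $s$) but leaves this point implicit.
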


\begin{lemma}\label{empty_R}
	Let $G=(\mu,\mathcal{V},\mathcal{W},\mathcal{E},\mathcal{P},f,g)$ be a GCD graph with $\delta(G) > 0$. Then there exists a GCD subgraph $G'=(\mu, \mathcal{V}', \mathcal{W}', \mathcal{E}', \mathcal{P}', f', g')$ of $G$ such that 
	\[ \mathcal{P}' \subseteq \mathcal{P} \cup \mathcal{R}(G), \quad \mathcal{R}(G') = \emptyset, \quad q(G') \gg q(G) \]
	with an absolute implied constant.
\end{lemma}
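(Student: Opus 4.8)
\textbf{Proof strategy for Lemma~\ref{empty_R}.} The goal is to iteratively absorb every prime in $\mathcal{R}(G)$ into the set of accounted-for primes $\mathcal{P}$, passing to GCD subgraphs at each step while controlling the quality. The plan is to run a greedy procedure: as long as $\mathcal{R}(G)$ is nonempty, pick a prime $p \in \mathcal{R}(G)$ and replace the current graph by one of its subgraphs $G_{p^k,p^\ell}$ (defined in Section~\ref{sec_5}) with $k,\ell$ chosen to maximise the quality. Since for $p \notin \mathcal{P}$ the decomposition $\mathcal{V} = \bigsqcup_{k \ge 0} \mathcal{V}_{p^k}$ and $\mathcal{W} = \bigsqcup_{\ell \ge 0} \mathcal{W}_{p^\ell}$ is a finite partition (the exponents are bounded because the vertex sets are finite sets of positive integers), one of the pieces $G_{p^k,p^\ell}$ must carry a non-negligible share of the edge mass. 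The key point is that passing from $G$ to the best such $G_{p^k,p^\ell}$ does not lose quality beyond a constant factor depending only on $p$, and that the product of these per-prime loss factors over all $p$ converges.

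\textbf{Key steps.} First I would recall (or cite from~\cite{km}) the single-prime splitting estimate: if $p \notin \mathcal{P}$ and we choose $k,\ell$ so that $G' := G_{p^k,p^\ell}$ has the largest quality among all choices, then $q(G') \ge c_p\, q(G)$ for some constant $c_p$ with $\prod_p c_p > 0$; more precisely $c_p = 1 - O(p^{-31/30})$ or similar, so that $\prod_p c_p$ converges to an absolute positive constant. This is exactly the type of bound underlying the definition of quality, whose denominator contains the factors $(1-p^{-31/30})^{10}$ precisely so that such splittings are ``paid for'' in advance. Second, I would observe that after splitting at $p$, the prime $p$ has been added to $\mathcal{P}$, and by property b)(ii) of a GCD graph together with the definition of $\mathcal{R}$, the prime $p$ no longer lies in $\mathcal{R}(G')$; moreover $\mathcal{R}(G') \subseteq \mathcal{R}(G) \setminus \{p\}$ since restricting edges and vertices can only shrink $\mathcal{R}$. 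Third, since $\mathcal{R}(G)$ is finite (all primes in it divide some $\gcd(v,w)$ with $v \in \mathcal{V}$, $w \in \mathcal{W}$, and $\mathcal{V},\mathcal{W}$ are finite), the procedure terminates after finitely many steps with a graph $G'$ satisfying $\mathcal{R}(G') = \emptyset$ and $\mathcal{P}' \subseteq \mathcal{P} \cup \mathcal{R}(G)$. Finally, the total quality loss is $\prod_{p \in \mathcal{R}(G)} c_p \ge \prod_{\text{all } p} c_p \gg 1$, so $q(G') \gg q(G)$ with an absolute implied constant, and $\delta(G') > 0$ is automatically preserved along the way (at each step we pick a piece with positive edge mass, which exists whenever $\delta > 0$).

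\textbf{Main obstacle.} The only real subtlety is verifying that the per-prime quality loss is genuinely bounded below by a $p$-dependent constant whose infinite product converges, i.e.\ that the ``best split'' at each prime does not degrade quality too much; this is essentially~\cite[Lemma 6.x / proof of Proposition 7.1]{km} and amounts to an averaging argument showing $\sum_{k,\ell} q(G_{p^k,p^\ell})$ compares favourably with $q(G)$ after accounting for the change in the $\prod_{p \in \mathcal{P}}$ factor of the quality. One must be slightly careful that the normalising factor in the quality associated with the newly added prime $p$ is the ``correct'' one in each of the cases $f'(p) = g'(p) \ge 1$, $f'(p) = g'(p) = 0$, and $f'(p) \ne g'(p)$ — but this case analysis is already carried out in~\cite{km}, and here we simply invoke it. I would therefore phrase the proof as a short induction on $|\mathcal{R}(G)|$, with the base case $\mathcal{R}(G) = \emptyset$ trivial and the inductive step an application of the single-prime splitting lemma followed by the inductive hypothesis applied to $G_{p^k,p^\ell}$.
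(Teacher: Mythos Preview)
Your overall strategy---iterate over primes in $\mathcal{R}(G)$, absorbing each into $\mathcal{P}$ while controlling quality, and defer the per-prime estimate to \cite{km}---matches the paper's approach, and the paper's proof is similarly brief. However, your description of the single-prime step is wrong in a way that matters.

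You claim that picking the $G_{p^k,p^\ell}$ of largest quality gives $q(G_{p^k,p^\ell}) \ge c_p\, q(G)$ with $\prod_p c_p > 0$, suggesting $c_p = 1 - O(p^{-31/30})$. This is false. Take $\mathcal{V}=\mathcal{W}=\{1,\dots,N\}$ with counting measure, $\mathcal{P}=\emptyset$, and full edge set $\mathcal{E}=\mathcal{V}\times\mathcal{W}$. For a prime $p\le N$ the class masses are $\alpha_k=\beta_k\approx(1-1/p)p^{-k}$, and since the edge mass factors as $e_{k,\ell}=\alpha_k\beta_\ell$, a direct computation shows that \emph{every} choice of $G_{p^k,p^\ell}$ has quality ratio at most roughly $(1-1/p)^2$. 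Iterating over all primes $p\le N$ (the same estimate persists on the successively restricted vertex sets, since divisibility by distinct primes is essentially independent on $\{1,\dots,N\}$) yields total loss $\prod_{p\le N}(1-1/p)^2 \asymp (\log N)^{-2}$, not $\gg 1$. So the step ``pick the best $G_{p^k,p^\ell}$'' does not give a summable loss, and no averaging argument over the $G_{p^k,p^\ell}$ alone can repair this.

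The paper avoids this via a two-stage citation of \cite{km}. Primes $p\le 10^{2000}$ are handled all at once through \cite[Proposition 8.3]{km} (recorded here as Proposition~\ref{prop_iteration1}), incurring a single absolute-constant loss. Each remaining large prime is then removed with \emph{no} quality loss via \cite[Propositions 8.1 and 8.2]{km} (Proposition~\ref{prop_iteration2}); the point is that in the ``flat'' case where one valuation class carries almost all the mass (as in the example above), the \cite{km} construction does \emph{not} pass to a single $G_{p^k,p^\ell}$ but uses a different subgraph that essentially discards only the small edge mass responsible for $p\in\mathcal{R}$. That sharp/flat dichotomy for large primes, together with the separate treatment of small primes, is the missing ingredient; once you cite those specific \cite{km} results rather than a generic ``best $G_{p^k,p^\ell}$'' bound, your inductive framework goes through unchanged.
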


\begin{lemma}[{\cite[Lemma 8.5]{km}}] \label{lem85}
	Let $G= (\mu,\mathcal{V},\mathcal{W},\mathcal{E},\mathcal{P},f,g)$ be a GCD graph with $\delta(G) > 0$. Then there exists a GCD subgraph $G' = (\mu,\mathcal{V},\mathcal{W},\mathcal{E}',\mathcal{P},f,g)$ of $G$ such that:
	\begin{enumerate}
		\item $q(G') \geq q(G)$.
		\item $\delta(G') \geq \delta(G)$.
		\item For all $v \in \mathcal{V}'$ and $w \in \mathcal{W}'$, we have
		\[\mu(\Gamma_{G'}(v)) \geq \frac{9\delta(G')}{10}\mu(\mathcal{W}') \quad \textrm{and} \quad \mu(\Gamma_{G'}(w)) \geq \frac{9\delta(G')}{10}\mu(\mathcal{V}').\]
	\end{enumerate}
\end{lemma}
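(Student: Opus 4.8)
Since Lemma~\ref{lem85} is quoted verbatim from \cite[Lemma 8.5]{km}, the plan is simply to recall the argument. The approach is a greedy ``regularization'' of the GCD graph: among all subgraphs of $G$ obtained by discarding vertices (together with the edges they carry), I would pick one of maximal quality, and then read off the degree bound in~(3) from its extremality.

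First I would observe that, $\mathcal{V}$ and $\mathcal{W}$ being finite, there are only finitely many septuples $G'' = (\mu, \mathcal{V}'', \mathcal{W}'', \mathcal{E} \cap (\mathcal{V}'' \times \mathcal{W}''), \mathcal{P}, f, g)$ with $\mathcal{V}'' \subseteq \mathcal{V}$ and $\mathcal{W}'' \subseteq \mathcal{W}$, and that each of them is a GCD subgraph of $G$: restricting the vertex sets and passing to the induced edge set cannot violate properties (i)--(iii) in the definition of a GCD graph, since the prime data $\mathcal{P}, f, g$ is left untouched. I would let $G'$ be one such $G''$ of maximal quality. As $G$ itself is a candidate and $\delta(G) > 0$ forces $q(G) > 0$, we get $q(G') \ge q(G) > 0$; in particular $\mu(\mathcal{V}')\mu(\mathcal{W}') > 0$, so $\delta(G')$ is well defined and positive, which is~(1). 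For~(2), the product over $\mathcal{P}$ in the definition of the quality is identical for $G$ and $G'$, so $q(G') \ge q(G)$ reads $\delta(G')^{10}\mu(\mathcal{V}')\mu(\mathcal{W}') \ge \delta(G)^{10}\mu(\mathcal{V})\mu(\mathcal{W}) \ge \delta(G)^{10}\mu(\mathcal{V}')\mu(\mathcal{W}')$, whence $\delta(G') \ge \delta(G)$.

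For~(3), I would fix $v \in \mathcal{V}'$, set $x = \mu(v)/\mu(\mathcal{V}') \in (0,1]$, and write $\beta = \mu(\Gamma_{G'}(v))/(\delta(G')\mu(\mathcal{W}'))$ for the normalized degree of $v$ (if $x=1$, i.e.\ $\mathcal{V}'=\{v\}$, then $\mu(\Gamma_{G'}(v)) = \delta(G')\mu(\mathcal{W}')$ outright, so assume $x<1$). Deleting $v$ produces another candidate subgraph, of density $\delta(G')(1-\beta x)/(1-x)$ and hence of quality $q(G')(1-\beta x)^{10}/(1-x)^9$, so maximality of $q(G')$ forces $(1-\beta x)^{10} \le (1-x)^9$. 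If $\beta < 9/10$ then $(1-\beta x)^{10} > (1-\tfrac{9}{10}x)^{10}$, so it suffices to know the elementary inequality $(1-\tfrac{9}{10}x)^{10} \ge (1-x)^9$ on $[0,1]$ to reach a contradiction, giving $\mu(\Gamma_{G'}(v)) \ge \tfrac{9}{10}\delta(G')\mu(\mathcal{W}')$; the symmetric computation treats $w \in \mathcal{W}'$. There is no genuine obstacle in this proof; the only point beyond bookkeeping is that last inequality, which I would prove by checking that the logarithmic derivative of $(1-\tfrac{9}{10}x)^{10}/(1-x)^9$ is non-negative on $[0,1)$ while the ratio equals $1$ at $x=0$ -- and this is precisely the reason why the exponent $10$ in the definition of the quality and the constant $9/10$ in the conclusion are calibrated to each other.
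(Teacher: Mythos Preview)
Your argument is correct and is precisely the standard proof of \cite[Lemma 8.5]{km}: maximize the quality over all induced subgraphs and read off the minimum-degree condition from extremality via the elementary inequality $(1-\tfrac{9}{10}x)^{10}\ge(1-x)^9$. The present paper does not give its own proof of this lemma; it is merely quoted from \cite{km}, so there is nothing further to compare. Two cosmetic remarks: the septuple in the statement should read $(\mu,\mathcal{V}',\mathcal{W}',\mathcal{E}',\mathcal{P},f,g)$ rather than $(\mu,\mathcal{V},\mathcal{W},\mathcal{E}',\mathcal{P},f,g)$---you have silently (and correctly) read it that way---and your claim $x\in(0,1]$ tacitly assumes $\mu(v)>0$; vertices with $\mu(v)=0$ can be discarded at the outset without affecting any of the quantities involved.
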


We now show how Lemmas~\ref{quality_density_lemma}--\ref{lem85} imply Propositions~\ref{prop_goodgcd1} and~\ref{prop_goodgcd2}.

\begin{proof}[Proof of Proposition~\ref{prop_goodgcd1}] Apply Lemma~\ref{empty_R} to $G$ to obtain a GCD subgraph $G^{(1)} \preceq G$ with $\mathcal{R}(G^{(1)})= \emptyset$ and $q(G^{(1)}) \gg q(G)$, satisfying properties a) and d). Next, apply Lemma~\ref{lem85} to $G^{(1)}$ to obtain a GCD subgraph $G^{(2)} \preceq G^{(1)}$ which additionally satisfies properties b) and c).
\end{proof}

\begin{proof}[Proof of Proposition~\ref{prop_goodgcd2}] We follow~\cite[Proof of Proposition 7.1]{km}, although the ordering of the different stages needs to be changed. It suffices to prove the existence of a GCD subgraph which satisfies properties a) and d). Indeed, applying Lemma~\ref{lem85} to such a subgraph, we obtain a GCD subgraph that satisfies all required properties a)--d).

We start by applying Lemma~\ref{quality_density_lemma} to $G$, and obtain a GCD subgraph $G^{(1)} \preceq G$ such that $\mathcal{R}^{\twonotes}(G^{(1)}) = \emptyset$, and $G^{(1)}$ satisfies at least one of the following properties:
	\begin{enumerate}[A)]
		\item $q(G^{(1)}) \ge t^3 q(G)$.
		\item $q(G^{(1)}) \gg q(G), \quad \frac{\delta(G^{(1)})}{\delta(G)} \ge \frac{1}{F(t)^{1/4}},\quad \lvert\mathcal{P}_{\text{diff}}(G^{(1)})\rvert \le \log t$.
	\end{enumerate}
	We distinguish between two cases depending on whether A) or B) is satisfied.

	Case A). Assume that $q(G^{(1)}) \ge t^3 q(G)$. We apply Lemma~\ref{empty_R} to obtain a GCD subgraph $G^{(2A)} \preceq G^{(1)}$ with $\mathcal{R}(G^{(2A)}) = \emptyset$ and $q(G^{(2A)}) \gg q(G^{(1)})$. Then $G^{(2A)}$ satisfies properties a) and d)(i) in Proposition~\ref{prop_goodgcd2}. This finishes the proof for Case A).
	
	Case B). Assume that $q(G^{(1)}) \gg q(G),\;\frac{\delta(G^{(1)})}{\delta(G)} \ge \frac{1}{F(t)^{1/4}},\; \lvert\mathcal{P}_{\text{diff}}(G^{(1)})\rvert \le \log t$. First, we remove the effect of the large primes in $\mathcal{R}(G^{(1)})$ on $L_{F(t)}(v,w)$. By the assumption $\delta(G) \ge 1/ F(t)^{1/2}$, we have $\delta(G^{(1)}) \ge 1/F(t)^{1/4}$. We can thus apply Lemma~\ref{lem84} to $G^{(1)}$ with $s = F(t)$ to obtain a GCD subgraph $G^{(2B)} \preceq G^{(1)}$ with edge set $\mathcal{E}^{(2B)}$ such that
	\[ q(G^{(2B)}) \geq \frac{q(G^{(1)})}{2} \quad \textrm{and} \quad \mathcal{E}^{(2B)} \subseteq \Bigg\{(v,w) \in \mathcal{V} \times \mathcal{W}: \sum_{\substack{p \mid \frac{vw}{\gcd(v,w)^2},\\ p \geq F(t), \,\, p \notin \mathcal{R}(G^{(1)})}} \frac{1}{p} \geq \frac{3}{4F(t)^{1/4}}\Bigg\}.\]
	Now we remove the contribution of the primes in $\mathcal{P}_{\text{diff}}(G^{(1)})$. Using $\lvert\mathcal{P}_{\text{diff}}(G^{(1)})\rvert \le \log t$, we obtain that for any $(v,w) \in \mathcal{E}^{(2B)}$,
	\[ \sum_{\substack{p \mid \frac{vw}{\gcd(v,w)^2},\\ p \geq F(t), \,\, p \in \mathcal{P}_{\text{diff}}(G^{(1)})}} \frac{1}{p} \le \frac{\log t}{F(t)} \le \frac{1}{4F(t)^{1/4}} \]
	for large enough $t$. Hence for any $(v,w) \in \mathcal{E}^{(2B)}$,
	\begin{equation}\label{without_diff}\sum_{\substack{p \mid \frac{vw}{\gcd(v,w)^2},\\ p \geq F(t), \,\, p \notin \mathcal{R}(G^{(1)}) \cup \mathcal{P}_{\text{diff}}(G^{(1)})}} \frac{1}{p} \geq \frac{1}{2 F(t)^{1/4}}.
	\end{equation}
	Finally, we apply Lemma~\ref{empty_R} to $G^{(2B)}$ to obtain a GCD subgraph $G^{(3B)} \preceq G^{(2B)}$ such that
	\[ \mathcal{R}(G^{(3B)}) = \emptyset \quad \textrm{and} \quad q(G^{(3B)}) \gg q(G^{(2B)}) \gg q(G). \]
	Thus $G^{(3B)}$ satisfies property a) in Proposition~\ref{prop_goodgcd2}. Following the steps in Stage 4b of~\cite[Proof of Proposition 7.1]{km}, we deduce from \eqref{without_diff} that $G^{(3B)}$ satisfies property d)(ii) as well. This finishes the proof for Case B).
\end{proof}

\section{Proof of Lemmas~\ref{lem84} and~\ref{empty_R}} \label{sec_8}
\begin{proof}[Proof of Lemma~\ref{lem84}]
	Define
	\[S(v,w) = \sum_{\substack{p \mid \frac{vw}{\gcd(v,w)^2}, \\ p \geq s, \,\, p \in \mathcal{R}(G)}} \frac{1}{p} . \]
	Following the steps in~\cite[Proof of Lemma 8.4]{km}, from the assumptions $\mathcal{R}^{\twonotes}(G) = \emptyset$ and $\delta(G) \ge 1/s^{1/4}$ we deduce that
	\[ \sum_{(v,w) \in \mathcal{E}} \mu(v) \mu(w) S(v,w) \leq \sum_{p \geq s}\frac{2 \mu(\mathcal{V})\mu(\mathcal{W})}{p^{3/2}}
	\leq \frac{\mu(\mathcal{E})}{100s^{1/4}} \]
	for large enough $s$. Consider the edge set
	\[\mathcal{E}' := \Bigg\{(v,w) \in \mathcal{E}: S(v,w) \leq \frac{1}{4 s^{1/4}}\Bigg\} . \]
	An application of the Markov inequality gives
	\[ \mu(\mathcal{E}\setminus \mathcal{E'}) \leq 4s^{1/4} \sum_{(v,w) \in \mathcal{E}} \mu(v)\mu(w)S(v,w) \leq \frac{\mu(\mathcal{E})}{25}, \]
	that is, $\mu(\mathcal{E'}) \geq (24/25)\mu(\mathcal{E})$. By the definition of quality, the GCD subgraph $G' := (\mu,\mathcal{V},\mathcal{W},\mathcal{E}',\mathcal{P},f,g)$ thus satisfies
	\[ \frac{q(G')}{q(G)} = \left(\frac{\mu(\mathcal{E}')}{\mu(\mathcal{E})}\right)^{10} \geq \frac{1}{2}. \]
	Further, for any $(v,w) \in \mathcal{E}'$ we have 
	\[\sum_{\substack{p \mid \frac{vw}{\gcd(v,w)^2}, \\ p \geq s, \,\, p \notin \mathcal{R}(G)}}\frac{1}{p} = L_{s}(v,w) - S(v,w) \geq \frac{3}{4s^{1/4}}, \]
	as claimed.
\end{proof}

To prove Lemma~\ref{empty_R}, we will apply the following two propositions in an iterative way.
\begin{prop} \label{prop_iteration1} Let $G=(\mu,\mathcal{V},\mathcal{W},\mathcal{E},\mathcal{P},f,g)$ be a GCD graph with $\delta(G) > 0$. Then there is a GCD subgraph $G'=(\mu, \mathcal{V}', \mathcal{W}', \mathcal{E}', \mathcal{P}', f',g')$ of $G$ such that
\[ \mathcal{P}' \subseteq \mathcal{P} \cup (\mathcal{R}(G) \cap \{ p \le 10^{2000} \}), \quad \mathcal{R}(G') \subseteq \{p > 10^{2000}\}, \quad \frac{q(G')}{q(G)} \geq \frac{1}{10^{10^{3000}}}. \]
\end{prop}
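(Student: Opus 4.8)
The plan is to deduce the proposition by iterating a single-prime ``accounting'' step, the key ingredient being the following claim: $(\star)$ \emph{for every GCD graph $H=(\mu,\mathcal{V},\mathcal{W},\mathcal{E},\mathcal{P},f,g)$ with $\delta(H)>0$ and every prime $p\notin\mathcal{P}$, there exist $k,\ell\ge 0$ such that the GCD subgraph $H_{p^k,p^\ell}\preceq H$ satisfies $\delta(H_{p^k,p^\ell})>0$ and $q(H_{p^k,p^\ell})\ge c_0\,q(H)$ for an absolute constant $c_0>0$.} Recall from Section~\ref{sec_5} that $p$ is adjoined to the prime set of $H_{p^k,p^\ell}$, so in particular $p\notin\mathcal{R}(H_{p^k,p^\ell})$. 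Granting $(\star)$, the proposition follows by a finite iteration: put $G^{(0)}=G$, and while the current graph $G^{(i)}$ has some prime $p_i\le 10^{2000}$ in $\mathcal{R}(G^{(i)})$, apply $(\star)$ to $G^{(i)}$ and $p_i$ to obtain $G^{(i+1)}:=(G^{(i)})_{p_i^{k_i},p_i^{\ell_i}}$. By transitivity of $\preceq$ and the monotonicity $\mathcal{R}(G^{(i+1)})\subseteq\mathcal{R}(G^{(i)})\subseteq\cdots\subseteq\mathcal{R}(G)$ recalled in Section~\ref{sec_5}, every $p_i$ lies in $\mathcal{R}(G)\cap\{p\le 10^{2000}\}$, and once $p_i$ has been adjoined to the prime set it can never re-enter $\mathcal{R}$; hence the $p_i$ are pairwise distinct, the process terminates after at most $\pi(10^{2000})<10^{2000}$ steps (with $\pi$ the prime-counting function), and the final graph $G'$ satisfies $\mathcal{R}(G')\subseteq\{p>10^{2000}\}$, $\mathcal{P}'\subseteq\mathcal{P}\cup(\mathcal{R}(G)\cap\{p\le 10^{2000}\})$ and $q(G')/q(G)\ge c_0^{10^{2000}}$. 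Since $c_0$ is absolute (one may take $c_0\ge 10^{-30}$), crude bookkeeping then gives $q(G')/q(G)\ge 10^{-10^{3000}}$.

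To prove $(\star)$: since $p\notin\mathcal{P}$, the sets $\mathcal{E}_{p^k,p^\ell}=\mathcal{E}\cap(\mathcal{V}_{p^k}\times\mathcal{W}_{p^\ell})$ partition $\mathcal{E}$, with only finitely many nonempty. Writing $a_k=\mu(\mathcal{V}_{p^k})/\mu(\mathcal{V})$, $b_\ell=\mu(\mathcal{W}_{p^\ell})/\mu(\mathcal{W})$ and $m_{k\ell}=\mu(\mathcal{E}_{p^k,p^\ell})/(\mu(\mathcal{V})\mu(\mathcal{W}))$, we have $\sum_k a_k=\sum_\ell b_\ell=1$, $\sum_{k,\ell}m_{k\ell}=\delta(H)$ and $0\le m_{k\ell}\le a_k b_\ell$. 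Unwinding the definition of quality, $q(H_{p^k,p^\ell})/q(H)$ equals $\frac{m_{k\ell}^{10}}{(a_k b_\ell)^9\,\delta(H)^{10}}\cdot\frac{p^{|k-\ell|}}{(1-\mathbbm{1}_{k=\ell\ge 1}/p)^2(1-p^{-31/30})^{10}}$, and the second factor is $\ge 1$ on the diagonal $k=\ell$ and $\ge p^{|k-\ell|}$ off it. Hence it suffices to exhibit one pair with $m_{k\ell}>0$ and either $m_{k\ell}^{10}/(a_k b_\ell)^9\gg\delta(H)^{10}$ (if $k=\ell$) or $m_{k\ell}^{10}p^{|k-\ell|}/(a_k b_\ell)^9\gg\delta(H)^{10}$ (if $k\neq\ell$), with absolute implied constants.

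Such a pair is produced by a power-mean argument. Split $\delta(H)=\delta_{\mathrm{diag}}+\delta_{\mathrm{off}}$ according to whether $k=\ell$, and keep the larger piece. If $\delta_{\mathrm{diag}}\ge\delta(H)/2$ and $m_{kk}^{10}/(a_k b_k)^9\le E$ for all $k$, then $m_{kk}\le E^{1/10}(a_k b_k)^{9/10}$; summing and using the elementary bound $\sum_k(a_k b_k)^{9/10}\le\frac{1}{2}\sum_k(a_k^{9/5}+b_k^{9/5})\le 1$ (AM--GM together with $a_k,b_k\le 1$) forces $E\ge\delta_{\mathrm{diag}}^{10}\ge 2^{-10}\delta(H)^{10}$, so a maximizing $k$ (with $\ell=k$) works. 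If instead $\delta_{\mathrm{off}}\ge\delta(H)/2$, the same computation with the geometric weight $p^{-|k-\ell|/10}$ absorbed uses $\sum_{k\neq\ell}(a_k b_\ell)^{9/10}p^{-|k-\ell|/10}\le B_p$ with $B_p:=\sum_{j\in\mathbb{Z}}p^{-|j|/10}$, and yields a pair with $m_{k\ell}^{10}p^{|k-\ell|}/(a_k b_\ell)^9\ge(\delta(H)/(2B_p))^{10}$. What makes $c_0$ absolute rather than $p$-dependent is that $B_p=1+2p^{-1/10}/(1-p^{-1/10})$ is bounded uniformly over primes $p\ge 2$ (largest at $p=2$, where $B_2<30$), as is $(1-p^{-31/30})^{-1}$; this is precisely the feature of the quality function — the off-diagonal boost $p^{|k-\ell|}$ — that \cite{km} built in for this purpose.

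The main obstacle, which turns out to be mild, is to keep the per-prime quality loss uniform in $H$: it must not degrade when $\delta(H)$ is tiny, nor when the $p$-adic valuations on $\mathcal{V}$ and $\mathcal{W}$ are spread over arbitrarily many levels. Both are handled by the power-mean inequality together with the off-diagonal boost, which ensures that even when the edge mass is diffuse some single block $H_{p^k,p^\ell}$ — possibly off the diagonal, where the boost compensates a small edge mass — already captures a fixed proportion of the quality. The remaining steps — that $H_{p^k,p^\ell}\preceq H$, tracking the inclusions for $\mathcal{P}$ and $\mathcal{R}$ along the iteration, and the closing numerical estimate $c_0^{10^{2000}}\ge 10^{-10^{3000}}$ — are routine.
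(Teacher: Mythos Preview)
Your proof is correct and follows the same route as the paper: the paper simply cites \cite[Proposition~8.3]{km}, whose proof is exactly the iteration of a single-prime step (\cite[Lemma~13.2]{km}) asserting that for any $p\notin\mathcal{P}$ some block $H_{p^k,p^\ell}$ retains at least an absolute constant fraction of the quality. Your claim~$(\star)$ is precisely this lemma, and your power-mean argument (splitting into diagonal and off-diagonal, using $(a_k b_\ell)^{9/10}\le\tfrac12(a_k^{9/5}+b_\ell^{9/5})\le\tfrac12(a_k+b_\ell)$ and absorbing the off-diagonal geometric factor $p^{-|k-\ell|/10}$) is a clean self-contained version of that proof; the iteration and bookkeeping are then identical.
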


\begin{proof} This is a slight modification of~\cite[Proposition 8.3]{km}, the only difference being that in our formulation the set $\mathcal{P}$ can be non-empty. The proof given in~\cite{km} actually covers the formulation stated above, since it only relies on the iterative application of~\cite[Lemma 13.2]{km}, which holds for GCD graphs with an arbitrary set of primes.
\end{proof}

\begin{prop} \label{prop_iteration2} Let $G=(\mu,\mathcal{V},\mathcal{W},\mathcal{E},\mathcal{P},f,g)$ be a GCD graph with $\delta(G) > 0$ such that $\emptyset \neq \mathcal{R}(G) \subseteq \{p > 10^{2000}\}$. Then there is a GCD subgraph $G'=(\mu, \mathcal{V}', \mathcal{W}', \mathcal{E}', \mathcal{P}', f',g')$ of $G$ such that
\[ \mathcal{P} \subsetneq \mathcal{P}' \subseteq \mathcal{P}\cup \mathcal{R}(G), \quad \mathcal{R}(G') \subsetneq \mathcal{R}(G),\quad q(G') \geq q(G). \]
\end{prop}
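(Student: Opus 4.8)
The plan is to exploit the fact that $\mathcal{R}(G)$ consists only of large primes (all exceeding $10^{2000}$), and to pick one such prime $p \in \mathcal{R}(G)$ and ``resolve'' it by passing to one of the subgraphs $G_{p^k,p^\ell}$ introduced in Section~\ref{sec_5}. Recall that for $p \notin \mathcal{P}$ each $G_{p^k,p^\ell}$ is a genuine GCD subgraph of $G$, with $\mathcal{P}$ enlarged to $\mathcal{P} \cup \{p\}$; since $p \in \mathcal{R}(G)$ means $p \mid \gcd(v,w)$ for at least one edge, after restricting to the appropriate $(k,\ell)$ the prime $p$ is fully accounted for, so that $p \notin \mathcal{R}(G_{p^k,p^\ell})$, giving $\mathcal{R}(G') \subsetneq \mathcal{R}(G)$ and $\mathcal{P} \subsetneq \mathcal{P}'$. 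The remaining primes of $\mathcal{R}(G)$ other than $p$ can only decrease, so $\mathcal{R}(G') \subseteq \mathcal{R}(G)$ as required; and $\mathcal{P}' \subseteq \mathcal{P} \cup \{p\} \subseteq \mathcal{P} \cup \mathcal{R}(G)$.

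The real content is the quality bound $q(G') \ge q(G)$. First I would fix $p \in \mathcal{R}(G)$ and decompose $\mathcal{E} = \bigsqcup_{k,\ell} \mathcal{E}_{p^k,p^\ell}$ according to the exact powers $p^k \parallel v$, $p^\ell \parallel w$. Because $p \in \mathcal{R}(G)$, edges with $k=\ell=0$ contribute nothing to the ``$p$ divides the gcd'' condition, so one really has a splitting over the pairs $(k,\ell)$ that occur, and $\mu(\mathcal{E}) = \sum_{k,\ell} \mu(\mathcal{E}_{p^k,p^\ell})$. The quality of $G_{p^k,p^\ell}$ differs from a corresponding ``piece'' of $q(G)$ by exactly the local factor at $p$ appearing in the definition of quality, namely $p^{|k-\ell|}\big/\big((1 - \mathbbm{1}_{k=\ell\ge1}/p)^2 (1-p^{-31/30})^{10}\big)$, together with the tenth power of the density. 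Then I would run the standard convexity/pigeonhole argument from~\cite{km}: since $\delta(G)^{10}$ is a sum-of-products quantity, there must exist a choice of $(k,\ell)$ for which the corresponding subgraph inherits at least a proportional share of the quality; the point is that the extra arithmetic factor at $p$ is always $\ge 1$ (it is $\ge p^{|k-\ell|} \ge 1$ when $k \ne \ell$, and when $k = \ell \ge 1$ the gain $(1-1/p)^{-2}(1-p^{-31/30})^{-10} > 1$ compensates for spreading the edge mass over the diagonal blocks), so no quality is lost in the transition. This is precisely the mechanism of~\cite[Lemma 13.2]{km} restricted to a single prime, and I would cite that lemma (which, as noted in the proof of Proposition~\ref{prop_iteration1}, holds for GCD graphs with arbitrary $\mathcal{P}$) to extract the subgraph $G'$ with $q(G') \ge q(G)$ and $p \in \mathcal{P}'$.

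I expect the main obstacle to be purely bookkeeping: verifying that the chosen $G_{p^k,p^\ell}$ really satisfies all of axioms a)--e) of a GCD graph (in particular axiom e)(iii), that when $f'(p) = k \ne \ell = g'(p)$ one has $p^k \parallel v$ for \emph{all} $v \in \mathcal{V}_{p^k}$ and $p^\ell \parallel w$ for all $w \in \mathcal{W}_{p^\ell}$, which is automatic from the definition of $\mathcal{V}_{p^k}$), and that the strict inclusions $\mathcal{P} \subsetneq \mathcal{P}'$, $\mathcal{R}(G') \subsetneq \mathcal{R}(G)$ genuinely hold rather than just $\subseteq$. The strictness of $\mathcal{P} \subsetneq \mathcal{P}'$ is immediate since $p \notin \mathcal{P}$ but $p \in \mathcal{P}'$; the strictness $\mathcal{R}(G') \subsetneq \mathcal{R}(G)$ follows because $p \in \mathcal{R}(G)$ while $p \in \mathcal{P}'$ forces $p \notin \mathcal{R}(G')$, provided $\delta(G') > 0$ so that $\mathcal{E}' \ne \emptyset$ — and the positivity of $\delta(G')$ is exactly what the quality bound $q(G') \ge q(G) > 0$ guarantees. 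All the analytic estimates (Mertens-type bounds, the $10^{2000}$ threshold) are inherited from~\cite{km} and do not need to be reproved here.
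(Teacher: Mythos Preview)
Your overall strategy --- pick $p \in \mathcal{R}(G)$ and pass to some $G_{p^k,p^\ell}$ --- is correct, and the bookkeeping on $\mathcal{P} \subsetneq \mathcal{P}'$, $\mathcal{R}(G') \subsetneq \mathcal{R}(G)$ is fine. However, the heart of the matter, the inequality $q(G') \ge q(G)$, is not established by what you wrote.

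First, the citation is wrong. In \cite{km}, Lemma~13.2 is the ``small prime'' step underlying Proposition~8.3 (our Proposition~\ref{prop_iteration1}); it \emph{loses} quality by an absolute constant per prime, which is why Proposition~\ref{prop_iteration1} only concludes $q(G')/q(G) \ge 10^{-10^{3000}}$. Invoking it here cannot give $q(G') \ge q(G)$. The paper's own proof instead cites \cite[Propositions~8.1 and~8.2]{km}, and that distinction is exactly the point.

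Second, your ``convexity/pigeonhole'' sketch does not work as stated. Writing $\alpha_k = \mu(\mathcal{V}_{p^k})/\mu(\mathcal{V})$, $\beta_\ell = \mu(\mathcal{W}_{p^\ell})/\mu(\mathcal{W})$, $\gamma_{k\ell} = \mu(\mathcal{E}_{p^k,p^\ell})/\mu(\mathcal{E})$, the diagonal gain $(1-1/p)^{-2}(1-p^{-31/30})^{-10} = 1 + O(1/p)$ is far too small to compensate for spreading mass over several diagonal blocks: if $\alpha_0=\alpha_1=\beta_0=\beta_1=1/2$ and $\gamma_{k\ell}=1/4$, the diagonal choices give $q(G_{p^k,p^k})/q(G) \approx 1/4$, not $\ge 1$. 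What actually rescues this example is the off-diagonal factor $p^{|k-\ell|}$, but exploiting it in general requires a genuine case split. In \cite{km} this is the $\mathcal{R}^\sharp(G)/\mathcal{R}^\flat(G)$ dichotomy: if $p \in \mathcal{R}^\flat(G)$ (the ``spread out'' case, defined with threshold $1-10^{40}/p$), Proposition~8.1 of \cite{km} selects a pair $(k,\ell)$ via an argument of the type in our Lemma~\ref{lemma122}, and the largeness of $p$ is what makes $p^{|k-\ell|-1/2}/(10^{15}|k-\ell|^{20}) \ge 1$; if instead $p \in \mathcal{R}^\sharp(G)$ (both sides concentrated at a single power $p^k$), Proposition~8.2 uses a different mechanism. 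The threshold $p > 10^{2000}$ is not mere bookkeeping ``inherited'' from \cite{km}; it is precisely what makes the constants in the $\mathcal{R}^\flat$ branch close up to give a quality ratio $\ge 1$ rather than $\ge c$ for some $c<1$. Your write-up needs to invoke this dichotomy (or reproduce both branches) rather than appeal to a single convexity step.
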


\begin{proof} This follows directly from~\cite[Propositions 8.1 and 8.2]{km}.
\end{proof}

\begin{proof}[Proof of Lemma~\ref{empty_R}] First, we apply Proposition~\ref{prop_iteration1} to obtain a GCD subgraph $G^{(1)} \preceq G$ with
\[ \mathcal{R}(G^{(1)}) \subseteq \{p>10^{2000}\} \quad \textrm{and} \quad q(G^{(1)}) \gg q(G). \]
If $\mathcal{R}(G^{(1)}) = \emptyset$, we are done. Otherwise, we apply Proposition~\ref{prop_iteration2} to obtain a GCD subgraph $H_1 \preceq G^{(1)}$ with $\mathcal{R}(H_1) \subsetneq \mathcal{R}(G^{(1)})$ and $q(H_1) \geq q(G^{(1)}).$ By iterating this argument, we obtain a chain of GCD subgraphs $G^{(1)} \succeq H_1 \succeq H_2 \succeq \cdots$ with
\[ \mathcal{R}(G^{(1)}) \supsetneq \mathcal{R}(H_1) \supsetneq \mathcal{R}(H_2) \supsetneq \cdots \quad \textrm{and} \quad q(G^{(1)}) \leq q(H_1) \leq q(H_2) \leq \cdots . \]
Since $\mathcal{R}(G^{(1)})$ is a finite set, we arrive after finitely many steps at a GCD subgraph $G' \preceq G$ with $\mathcal{R}(G') = \emptyset$ and $q(G') \geq q(G^{(1)}) \gg q(G)$. Furthermore, we have $\mathcal{P}' \subseteq \mathcal{P} \cup \mathcal{R}(G)$ since this property is preserved at each step.
\end{proof}

\section{Quality increment vs.\ density loss} \label{sec_9}

The goal of this section is to prove Lemma~\ref{quality_density_lemma}. We start with three preliminary results.
\begin{lemma} \label{lemma121} Let $G=(\mu,\mathcal{V},\mathcal{W},\mathcal{E},\mathcal{P},f,g)$ be a GCD graph with $\delta(G)>0$, let $p \in\mathcal{R}(G)$, and let
\[ \alpha_k= \frac{\mu(\mathcal{V}_{p^k})}{\mu(\mathcal{V})} \qquad \text{and} \qquad \beta_l= \frac{\mu(\mathcal{W}_{p^l})}{\mu(\mathcal{W})}. \]
Then there exists a pair of non-negative integers $(k,l)=(k_p,l_p)$ such that $\alpha_k, \beta_l >0$, and
\[ \frac{\mu(\mathcal{E}_{p^k,p^l})}{\mu(\mathcal{E})} \geq \left\{ \begin{array}{ll} (\alpha_k \beta_k)^{9/10} & \textrm{if } k=l, \\ \frac{\alpha_k (1 - \beta_k) + \beta_k (1-\alpha_k) + \alpha_l (1-\beta_l) + \beta_l (1-\alpha_l)}{40 |k-l|^2} & \textrm{if } k \neq l . \end{array} \right. \]
\end{lemma}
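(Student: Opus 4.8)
The plan is to argue by contradiction, weighing the total edge mass $\mu(\mathcal{E})$ against the sum, over all admissible pairs $(k,l)$, of the lower bounds proposed in the statement. Since $\{\mathcal{V}_{p^k}\}_{k\ge 0}$ and $\{\mathcal{W}_{p^l}\}_{l\ge 0}$ partition $\mathcal{V}$ and $\mathcal{W}$, writing $e_{k,l}=\mu(\mathcal{E}_{p^k,p^l})/\mu(\mathcal{E})$ we have $e_{k,l}\ge 0$, $\sum_{k,l}e_{k,l}=1$, and likewise $\sum_k\alpha_k=\sum_l\beta_l=1$; here I use $\delta(G)>0$, which guarantees $\mu(\mathcal{V}),\mu(\mathcal{W}),\mu(\mathcal{E})>0$ (the hypothesis $p\in\mathcal{R}(G)$ will in fact not be needed beyond this). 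Note also that $\mathcal{E}_{p^k,p^l}\subseteq\mathcal{V}_{p^k}\times\mathcal{W}_{p^l}$, so $e_{k,l}>0$ forces $\alpha_k,\beta_l>0$. Suppose no pair $(k,l)$ satisfies the conclusion. Then $e_{k,k}<(\alpha_k\beta_k)^{9/10}$ whenever $\alpha_k\beta_k>0$, and $e_{k,l}<(\gamma_k+\gamma_l)/(40(k-l)^2)$ whenever $k\neq l$ and $\alpha_k,\beta_l>0$, where I abbreviate $\gamma_k:=\alpha_k(1-\beta_k)+\beta_k(1-\alpha_k)$. Summing over all $(k,l)$ (those with $\alpha_k\beta_l=0$ contribute $e_{k,l}=0$) and noting that at least one strict bound is genuinely invoked (as $\mu(\mathcal{E})>0$), we get
\[ 1=\sum_{k,l}e_{k,l}<\sum_k(\alpha_k\beta_k)^{9/10}+\sum_{k\neq l}\frac{\gamma_k+\gamma_l}{40(k-l)^2}, \]
so it will suffice to show the right-hand side is at most $1$.

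For the off-diagonal sum, symmetrising in $k\leftrightarrow l$ and using $\sum_{l\ge 0,\,l\neq k}(k-l)^{-2}\le\sum_{m\neq 0}m^{-2}=\pi^2/3$ gives
\[ \sum_{k\neq l}\frac{\gamma_k+\gamma_l}{40(k-l)^2}=\frac{1}{20}\sum_k\gamma_k\sum_{l\neq k}\frac{1}{(k-l)^2}\le\frac{\pi^2}{60}\sum_k\gamma_k=\frac{\pi^2}{30}(1-\sigma), \]
where $\sigma:=\sum_k\alpha_k\beta_k$ and $\sum_k\gamma_k=2-2\sigma$. Hence the whole matter reduces to the elementary inequality $\sum_k(\alpha_k\beta_k)^{9/10}\le 1-\frac{\pi^2}{30}(1-\sigma)$.

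To prove this last inequality I would set $t_k:=\sqrt{\alpha_k\beta_k}\in[0,1]$, so that $\sum_k t_k^2=\sigma$ and, by the Cauchy--Schwarz inequality, $\sum_k t_k\le(\sum_k\alpha_k)^{1/2}(\sum_k\beta_k)^{1/2}=1$. With $\phi(t):=t^{9/5}-\frac{\pi^2}{30}t^2$, the inequality becomes $\sum_k\phi(t_k)\le\phi(1)$. One checks that on $[0,1]$ the function $\phi$ vanishes at $0$, is increasing, and is convex (indeed $\phi''(t)=\frac{36}{25}t^{-1/5}-\frac{\pi^2}{15}\ge\frac{36}{25}-\frac{\pi^2}{15}>0$ there); a convex function vanishing at the origin is superadditive, so for every finite set $S$ with $\sum_{k\in S}t_k\le 1$ one has $\sum_{k\in S}\phi(t_k)\le\phi\big(\sum_{k\in S}t_k\big)\le\phi(1)$, and letting $S$ exhaust $\mathbb{Z}_{\ge 0}$ (the series converges since $0\le\phi(t_k)\le t_k$) gives $\sum_k\phi(t_k)\le\phi(1)$. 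Since $\phi(1)=1-\frac{\pi^2}{30}$ and $\sum_k t_k^2=\sigma$, this is precisely $\sum_k(\alpha_k\beta_k)^{9/10}\le 1-\frac{\pi^2}{30}(1-\sigma)$, and combining with the two displays above yields the contradiction $1<1$.

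The partitioning and the symmetrisation are routine; the one delicate point — and where the particular constants $40$ and $9/10$ in the statement are being spent — is the final elementary inequality. The convexity/superadditivity estimate is essentially sharp (equality is approached as $\alpha_0=\beta_0\to 1$), and a cruder bound such as $(\alpha_k\beta_k)^{9/10}\le\sqrt{\alpha_k\beta_k}$ would only yield $\sum_k(\alpha_k\beta_k)^{9/10}\le 1$, leaving no slack to absorb the off-diagonal contribution $\frac{\pi^2}{30}(1-\sigma)$. I expect this balancing to be the main (if still short) obstacle; everything else is bookkeeping.
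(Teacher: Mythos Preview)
Your proof is correct and follows essentially the same approach as the paper, which in turn defers to \cite[Lemma 12.1]{km}: assume all candidate pairs fail, sum the violated upper bounds over $(k,l)$, and show the total is at most $1$. The paper's only stated modification is the replacement of the off-diagonal weight estimate by $\sum_{|j|\ge 1}\frac{1}{40 j^2}\le\frac{1}{10}$; you carry out the argument in full and handle the diagonal--off-diagonal balance via the superadditivity of the convex function $\phi(t)=t^{9/5}-\tfrac{\pi^2}{30}t^2$, which is a clean way to make explicit what the reference to \cite{km} leaves implicit.
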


\begin{proof} This follows from a straightforward modification of the proof of~\cite[Lemma 12.1]{km}, replacing the estimate$\frac{1}{1000} \sum_{|j|\geq 1} 2^{-|j|/20} \leq \frac{1}{10}$ by $\sum_{|j| \geq 1} \frac{1}{40 j^2} \leq \frac{1}{10}$ in one of the steps.
\end{proof}

\begin{lemma}\label{optim_2} Let $\alpha_k,\beta_k,\alpha_l,\beta_l \in [0,1]$ with $\alpha_k, \beta_l >0$ be such that $\alpha_k + \alpha_l \leq 1$ and $\beta_k + \beta_l \leq 1$, and let
\[ S = \alpha_k (1 - \beta_k) + \beta_k (1-\alpha_k) + \alpha_l (1-\beta_l) + \beta_l (1-\alpha_l). \]
If $\min\{\alpha_k,\beta_k\} \leq 1-R$ and $\min\{\alpha_{l},\beta_{l}\} \leq 1-R$ with some $R \in \left[0,1/\sqrt{2} \right]$, then $\frac{S^2}{\alpha_k\beta_{l}} \geq \frac{R}{2}$.
\end{lemma}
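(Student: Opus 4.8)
The plan is to bound $S$ from below in three complementary ways and then combine them. Set $M_k := \alpha_k(1-\beta_k) + \beta_k(1-\alpha_k)$ and $M_l := \alpha_l(1-\beta_l) + \beta_l(1-\alpha_l)$, so that $S = M_k + M_l$. The three estimates I aim to prove are: (1) $M_k \ge \min\{\alpha_k, R/2\}$; (2) $M_l \ge \min\{\beta_l, R/2\}$; (3) $S \ge 2\alpha_k\beta_l$.

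For (1), write $M_k = \alpha_k + \beta_k(1-2\alpha_k)$. If $\alpha_k \le 1/2$, then $1-2\alpha_k \ge 0$ and $\beta_k \ge 0$ give $M_k \ge \alpha_k$, with no appeal to the hypothesis. If $\alpha_k > 1/2$, the assumption $\min\{\alpha_k,\beta_k\} \le 1-R$ forces $\beta_k \le 1-R$ or $\alpha_k \le 1-R$: in the first case $M_k \ge \alpha_k(1-\beta_k) \ge \frac12 R$, while in the second case $1-2\alpha_k < 0$ together with $\beta_k \le 1$ gives $M_k \ge \alpha_k + (1-2\alpha_k) = 1-\alpha_k \ge R$. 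Either way $M_k \ge R/2$, which proves (1). Estimate (2) follows by the same argument, exploiting the symmetry of the problem under the exchange $(\alpha_k,\beta_k) \leftrightarrow (\beta_l,\alpha_l)$, which leaves $S$, both hypotheses, and the desired inequality invariant. For (3), discard the nonnegative terms $\beta_k(1-\alpha_k)$ and $\alpha_l(1-\beta_l)$ and use $1-\beta_k \ge \beta_l$ (from $\beta_k + \beta_l \le 1$) and $1-\alpha_l \ge \alpha_k$ (from $\alpha_k + \alpha_l \le 1$) to get $S \ge \alpha_k(1-\beta_k) + \beta_l(1-\alpha_l) \ge \alpha_k\beta_l + \alpha_k\beta_l = 2\alpha_k\beta_l$.

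To conclude, combine the pieces. Since $S \ge M_k$ and $S \ge M_l$, we have $S^2 \ge \min\{\alpha_k, R/2\}\min\{\beta_l, R/2\}$. If $\alpha_k \le R/2$, the first factor equals $\alpha_k$, and since $\min\{\beta_l, R/2\} \ge \frac R2\beta_l$ always (here $\beta_l \le 1$ and $R/2 \le 1$), we obtain $S^2 \ge \frac R2\alpha_k\beta_l$, which is the claim; the case $\beta_l \le R/2$ is symmetric. If instead $\alpha_k > R/2$ and $\beta_l > R/2$, then $S^2 \ge (R/2)^2$, and multiplying this with the square of (3) gives $S^4 \ge \frac{R^2}{4}\cdot 4(\alpha_k\beta_l)^2 = R^2(\alpha_k\beta_l)^2$, hence $S^2 \ge R\,\alpha_k\beta_l \ge \frac R2\alpha_k\beta_l$.

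The only genuine difficulty is finding this packaging: each of (1)--(3) is a short case check, but no single one of them suffices. The bound $M_k \ge \min\{\alpha_k, R/2\}$ is wasteful precisely when $\alpha_k$ is close to $1$ --- and it is exactly in that regime that $\alpha_k\beta_l$ is bounded away from $0$, so that the crude estimate (3) can take over. Playing these two regimes off against one another is what yields the clean constant $R/2$ rather than a weaker $R^2$-type bound.
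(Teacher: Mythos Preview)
Your proof is correct. Both your argument and the paper's share the same two core ingredients: the symmetry under $(\alpha_k,\beta_k)\leftrightarrow(\beta_l,\alpha_l)$ and the lower bound $S\ge 2\alpha_k\beta_l$ (your estimate (3), the paper's equation \eqref{S_larger_ab}). The organizations differ, however. The paper assumes without loss of generality that $\alpha_k\ge\beta_l$ and then splits into the cases $\alpha_k\le 1/2$ (where $S\ge\alpha_k+\beta_l\ge 2\sqrt{\alpha_k\beta_l}$ by AM--GM) and $\alpha_k>1/2$ (where one bounds $1-\beta_k$ or $1-\alpha_k$ directly in terms of $S^2/(\alpha_k\beta_l)$ via \eqref{S_larger_ab}). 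Your approach instead isolates the two symmetric pointwise bounds $M_k\ge\min\{\alpha_k,R/2\}$ and $M_l\ge\min\{\beta_l,R/2\}$, and then multiplies them, invoking (3) only in the residual case where both $\alpha_k$ and $\beta_l$ exceed $R/2$. Your packaging is more modular and makes the role of each hypothesis transparent; the paper's is slightly more direct and avoids the auxiliary quantities $M_k,M_l$. Neither has a real advantage in length or strength of conclusion.
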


\begin{proof} Clearly,
\begin{equation} \label{S_larger_ab}
S \geq \alpha_k(1-\beta_k) + \beta_{l}(1-\alpha_{l}) \geq \alpha_k\beta_{l} + \beta_{l}\alpha_k = 2\alpha_k\beta_{l} .
\end{equation}
Since the conditions of the lemma and $S$ are invariant under switching $\alpha_k$ with $\beta_l$ and $\alpha_l$ with $\beta_k$, respectively, we may assume that $\alpha_k \geq \beta_{l}$.

Assume first that $\alpha_k \le 1/2$. Then $\beta_l \le 1/2$ as well, hence
\[ S = \beta_k(1 - 2\alpha_k) + \alpha_k + \alpha_{l}(1 - 2\beta_{l}) + \beta_{l} \geq \alpha_k + \beta_{l} \ge 2 \sqrt{\alpha_k \beta_l} . \]
Therefore $S^2/(\alpha_k \beta_l) \ge 4>R/2$, as claimed.

Assume next that $\alpha_k>1/2$. Formula \eqref{S_larger_ab} then gives
\[ 1 - \beta_k \le 2\alpha_k(1-\beta_k) \leq 2S \leq \frac{S^2}{\alpha_k\beta_{l}} . \]
If $\beta_k \le 1-R$, then $R \le 1-\beta_k \le S^2/(\alpha_k \beta_l)$, as claimed. If $\beta_k>1-R>1/4$, then by the assumption $\min \{ \alpha_k, \beta_k \} \le 1-R$ we have $\alpha_k \le 1-R$, and we similarly deduce
\[ R \le 1-\alpha_k \le 4 \beta_k (1-\alpha_k) \le 4S \le 2 \frac{S^2}{\alpha_k \beta_l}, \]
which finishes the proof of the statement.
\end{proof}

The following lemma is a variant of~\cite[Lemma 12.2]{km}. 
\begin{lemma} \label{lemma122} Consider a GCD graph $G=(\mu,\mathcal{V},\mathcal{W},\mathcal{E},\mathcal{P},f,g)$ with $\delta(G)>0$ and a prime $p \in \mathcal{R}^{\twonotes}(G)$. Let $(k,l)=(k_p,l_p)$ be a pair of non-negative integers which satisfies the conclusion of Lemma~\ref{lemma121}. Then there is a GCD subgraph $G'=(\mu,\mathcal{V}',\mathcal{W}',\mathcal{E}',\mathcal{P}',f',g')$ of $G$ with $\mathcal{P}' = \mathcal{P} \cup \{p\}$ and $\mathcal{R}(G') \subseteq \mathcal{R}(G) \backslash \{p\}$ such that
\[ \frac{\delta(G')}{\delta(G)} \ge \left\{ \begin{array}{ll} 1 & \textrm{if } k=l, \\ \frac{1}{20 |k-l|^2} & \textrm{if } k \neq l, \end{array} \right. \]
and
\[ \frac{q(G')}{q(G)} \ge \left\{ \begin{array}{ll} 1 & \textrm{if } k=l, \\ \frac{p^{|k-l| -1/2}}{10^{15} |k-l|^{20}} & \textrm{if } k \neq l. \end{array} \right. \]
\end{lemma}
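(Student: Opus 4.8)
The plan is to take for $G'$ the canonical subgraph $G_{p^{k},p^{l}}$ attached to the pair $(k,l)=(k_{p},l_{p})$ supplied by Lemma~\ref{lemma121}. Since $p\in\mathcal{R}^{\twonotes}(G)\subseteq\mathcal{R}(G)$ we have $p\notin\mathcal{P}$, so, as recalled in Section~\ref{sec_5}, $G_{p^{k},p^{l}}=(\mu,\mathcal{V}_{p^{k}},\mathcal{W}_{p^{l}},\mathcal{E}_{p^{k},p^{l}},\mathcal{P}\cup\{p\},f_{p^{k}},g_{p^{l}})$ really is a GCD subgraph of $G$, with $\mathcal{P}'=\mathcal{P}\cup\{p\}$; and because $\mathcal{E}'\subseteq\mathcal{E}$ while $p\in\mathcal{P}'$ one reads off immediately that $\mathcal{R}(G')\subseteq\mathcal{R}(G)\setminus\{p\}$. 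Writing $\alpha_{k}=\mu(\mathcal{V}_{p^{k}})/\mu(\mathcal{V})$, $\beta_{l}=\mu(\mathcal{W}_{p^{l}})/\mu(\mathcal{W})$ as in Lemma~\ref{lemma121}, and $\rho=\mu(\mathcal{E}_{p^{k},p^{l}})/\mu(\mathcal{E})$ (all of $\mu(\mathcal{V}'),\mu(\mathcal{W}'),\mu(\mathcal{E}')$ being positive by the conclusion of Lemma~\ref{lemma121}, so the ensuing ratios are legitimate), I would first record, straight from the definitions of $\delta(\cdot)$ and $q(\cdot)$ (the $\mathcal{P}$-part of the quality product being untouched, the new prime $p$ contributing a factor $p^{|k-l|}/\bigl((1-\mathbbm{1}_{k=l\ge1}/p)^{2}(1-p^{-31/30})^{10}\bigr)$), the two identities
\[ \frac{\delta(G')}{\delta(G)}=\frac{\rho}{\alpha_{k}\beta_{l}},\qquad \frac{q(G')}{q(G)}=\frac{\rho^{10}}{(\alpha_{k}\beta_{l})^{9}}\cdot\frac{p^{|k-l|}}{\bigl(1-\mathbbm{1}_{k=l\ge1}/p\bigr)^{2}\bigl(1-p^{-31/30}\bigr)^{10}}. \]
Everything then reduces to inserting the lower bounds on $\rho$ from Lemma~\ref{lemma121}. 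In the easy case $k=l$ one has $\rho\ge(\alpha_{k}\beta_{k})^{9/10}$, so $\delta(G')/\delta(G)\ge(\alpha_{k}\beta_{k})^{-1/10}\ge1$, and since $p^{|k-l|}=1$ and the remaining factor in the quality identity is $\ge1$, also $q(G')/q(G)\ge(\alpha_k\beta_k)^{9}/(\alpha_k\beta_k)^{9}=1$.

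The substantive case is $k\neq l$, where Lemma~\ref{lemma121} gives $\rho\ge S/(40|k-l|^{2})$ with $S=\alpha_{k}(1-\beta_{k})+\beta_{k}(1-\alpha_{k})+\alpha_{l}(1-\beta_{l})+\beta_{l}(1-\alpha_{l})$. Because $\mathcal{V}_{p^{k}},\mathcal{V}_{p^{l}}$ are disjoint (as are $\mathcal{W}_{p^{k}},\mathcal{W}_{p^{l}}$) we have $\alpha_{k}+\alpha_{l}\le1$ and $\beta_{k}+\beta_{l}\le1$; dropping two non-negative terms and using these gives $S\ge\alpha_{k}(1-\beta_{k})+\beta_{l}(1-\alpha_{l})\ge2\alpha_{k}\beta_{l}$ (cf.\ \eqref{S_larger_ab}). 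This already yields the density bound $\delta(G')/\delta(G)\ge S/(40|k-l|^{2}\alpha_{k}\beta_{l})\ge1/(20|k-l|^{2})$. For the quality bound I would invoke the hypothesis $p\in\mathcal{R}^{\twonotes}(G)$, which says precisely that $\min\{\alpha_{j},\beta_{j}\}\le1-1/\sqrt{p}$ for every $j\ge0$, in particular for $j=k$ and $j=l$; Lemma~\ref{optim_2} with $R=1/\sqrt{p}$ (admissible since $p\ge2$) then gives $S^{2}/(\alpha_{k}\beta_{l})\ge1/(2\sqrt{p})$. Combining this with $S\ge2\alpha_{k}\beta_{l}$ via the split $S=S^{4/5}S^{1/5}\ge(2\alpha_{k}\beta_{l})^{4/5}\bigl(\alpha_{k}\beta_{l}/(2\sqrt{p})\bigr)^{1/10}$ and raising to the tenth power gives $S^{10}\ge(2\alpha_{k}\beta_{l})^{8}\cdot\alpha_{k}\beta_{l}/(2\sqrt{p})=128(\alpha_{k}\beta_{l})^{9}/\sqrt{p}$, i.e.\ $S^{10}/(\alpha_{k}\beta_{l})^{9}\ge128/\sqrt{p}$. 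Feeding this back, and using that for $k\neq l$ the extra factor in the quality identity is at least $p^{|k-l|}$, I would conclude
\[ \frac{q(G')}{q(G)}\ge\frac{S^{10}}{40^{10}|k-l|^{20}(\alpha_{k}\beta_{l})^{9}}\,p^{|k-l|}\ge\frac{128}{40^{10}}\cdot\frac{p^{|k-l|-1/2}}{|k-l|^{20}}\ge\frac{p^{|k-l|-1/2}}{10^{15}|k-l|^{20}}, \]
the last step being the numerical check $128\cdot10^{15}>40^{10}$.

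The only genuinely delicate point is the exponent bookkeeping in the case $k\neq l$: the whole improvement over the analogous lemma in~\cite{km} (which, working with the threshold $1-10^{40}/p$ of $\mathcal{R}^{\flat}$, would only produce a factor of order $p^{|k-l|-1}$) lies in obtaining the half-power saving $p^{-1/2}$ rather than $p^{-1}$, and this is exactly where the sharper threshold $1-1/\sqrt{p}$ built into $\mathcal{R}^{\twonotes}$, together with the $4/5$--$1/5$ weighting of the two lower bounds for $S$, is used. Everything else --- that $G_{p^{k},p^{l}}$ is a legitimate GCD subgraph, the multiplicative behaviour of $q(\cdot)$ under adjoining a prime, and the $k=l$ case --- is routine.
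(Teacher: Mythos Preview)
Your proof is correct and follows essentially the same route as the paper's: both take $G'=G_{p^{k},p^{l}}$, handle $k=l$ directly from Lemma~\ref{lemma121}, and in the case $k\neq l$ combine the elementary bound $S\ge 2\alpha_{k}\beta_{l}$ with Lemma~\ref{optim_2} (applied with $R=1/\sqrt{p}$, available precisely because $p\in\mathcal{R}^{\twonotes}(G)$). The only cosmetic difference is that the paper splits $S^{10}=S^{8}\cdot S^{2}$ and bounds the two factors separately, whereas you equivalently write $S=S^{4/5}S^{1/5}$ before raising to the tenth power; the resulting inequality $S^{10}/(\alpha_{k}\beta_{l})^{9}\ge 2^{7}/\sqrt{p}$ and the final numerical constant are identical.
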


\begin{proof} We claim that $G'= G_{p^k, p^l}$ satisfies all required properties. Note that $\mathcal{P}' = \mathcal{P} \cup \{p\}$ and $\mathcal{R}(G') \subseteq \mathcal{R}(G) \backslash \{p\}$ hold by the definition of $G_{p^k, p^l}$. If $k = l$, then by Lemma~\ref{lemma121} and the definition of quality,
\[ \frac{\delta(G')}{\delta(G)} = \frac{\mu(\mathcal{E}_{p^k,p^k})}{\mu(\mathcal{E})} \cdot \frac{1}{\alpha_k\beta_k} \geq 1, \]
and
\[ \frac{q(G')}{q(G)} = \left(\frac{\mu(\mathcal{E}_{p^k,p^k})}{\mu(\mathcal{E})}\right)^{10}(\alpha_k\beta_k)^{-9}\frac{1}{(1 - \mathbbm{1}_{k \geq 1}/p)^2(1 - 1/p^{31/30})^{10}}\geq 1 , \]
as claimed. Let $S$ be as in Lemma~\ref{optim_2}. If $k \neq l$, then by Lemma~\ref{lemma121} together with \eqref{S_larger_ab},
\[ \frac{\delta(G')}{\delta(G)} = \frac{\mu(\mathcal{E}_{p^k,p^l})}{\mu(\mathcal{E})} \cdot \frac{1}{\alpha_k \beta_l} \geq \frac{S}{40 |k-l|^2 \alpha_k \beta_l} \geq  \frac{1}{20 |k-l|^2}.\]
Furthermore,
\[ \begin{split} \frac{q(G')}{q(G)} = \left(\frac{\mu(\mathcal{E}_{p^k,p^l})}{\mu(\mathcal{E})}\right)^{10}(\alpha_k\beta_l)^{-9} \frac{p^{|k-l|}}{(1 - 1/p^{31/30})^{10}} &\geq \frac{S^{10}}{(40|k-l|^2)^{10}} \cdot \frac{1}{(\alpha_k \beta_l)^9} p^{|k-l|} \\ &\geq \frac{2^8 p^{|k-l|}}{40^{10} |k-l|^{20}} \cdot \frac{S^2}{\alpha_k \beta_l} . \end{split} \]
The assumption $p \in \mathcal{R}^{\twonotes}(G)$ ensures that $\min \{ \alpha_k, \beta_k \} \leq 1 -1/\sqrt{p}$ and $\min \{ \alpha_l, \beta_l \} \leq 1 - 1/\sqrt{p}$. Hence we can apply Lemma~\ref{optim_2} with $R = 1/\sqrt{p}$, which shows that
\[ \frac{q(G')}{q(G)} \geq \frac{2^7 p^{|k-l|-1/2}}{40^{10}|k-l|^{20}} > \frac{p^{|k-l|-1/2}}{10^{15}|k-l|^{20}}, \]
as claimed.
\end{proof}

\begin{proof}[Proof of Lemma~\ref{quality_density_lemma}] We apply Lemma~\ref{lemma122} iteratively to $G$ until we obtain a GCD subgraph $G'=(\mu, \mathcal{V}', \mathcal{W}', \mathcal{E}', \mathcal{P}', f', g')$ of $G$ such that $\mathcal{R}^{\twonotes}(G')=\emptyset$. Note that each prime $p$ is used at most once, and $\mathcal{P}'$ is precisely the set of primes to which Lemma~\ref{lemma122} was applied. For each $p \in \mathcal{P}'$, let $(k_p,l_p)$ be the pair of non-negative integers with which Lemma~\ref{lemma122} is applied.\footnote{We might use primes $p \not\in \mathcal{R}^{\twonotes}(G)$ of the original GCD graph $G$, since $\mathcal{R}^{\twonotes}$ does not necessarily decrease at each step. However, $\mathcal{R}$ decreases by at least one element at each step, hence the algorithm terminates.} Since the original graph $G$ had an empty set of primes, we have $\mathcal{P}_{\textrm{diff}}(G') = \{ p \in \mathcal{P}' \, : \, k_p \neq l_p \}$. By Lemma~\ref{lemma122}, the resulting graph $G'$ satisfies
\[ \frac{\delta(G')}{\delta(G)} \ge \prod_{p \in \mathcal{P}_{\textrm{diff}}(G')} \frac{1}{20 |k_p-l_p|^2} \quad \textrm{and} \quad \frac{q(G')}{q(G)} \ge \prod_{p \in \mathcal{P}_{\textrm{diff}}(G')} \frac{p^{|k_p-l_p|-1/2}}{10^{15} |k_p-l_p|^{20}} . \]
In particular,
\begin{equation}\label{qG'/qGbound}
\frac{q(G')}{q(G)} \gg \prod_{p \in \mathcal{P}_{\textrm{diff}}(G')} p^{|k_p-l_p|/4} \gg 1.
\end{equation}

Fix $C \ge 1$, and let $t \ge 1$ be large enough in terms of $C$. Let $N=|\mathcal{P}_{\textrm{diff}}(G')|$, and for the sake of readability, in the sequel let $\log_i$ denote the $i$-fold iterated logarithm. It will be enough to show that if $q(G')<t^3 q(G)$ (i.e.\ property a) does not hold), then $\delta(G')/\delta(G) \ge 1/F(t)^{1/4}$, and $N \le \log t$ (i.e.\ property b) holds). The latter follows easily from \eqref{qG'/qGbound} and $q(G')<t^3 q(G)$:
\[ (N!)^{1/4} \le \prod_{p \in \mathcal{P}_{\textrm{diff}}(G')} p^{|k_p-l_p|/4} \ll \frac{q(G')}{q(G)} < t^3. \]
Hence $N \ll (\log t)/\log_2 t$, and in particular, $N \le \log t$ for large enough $t$, as claimed. It remains to show that $q(G')<t^3 q(G)$ implies $\delta(G')/\delta(G) \ge 1/F(t)^{1/4}$.

Let $Y=\{ p \in \mathcal{P}_{\mathrm{diff}}(G') \, : \, |k_p-l_p| \ge \log_3 t \}$. Bounding the sum term by term gives
\begin{equation}\label{pnotinY}
\sum_{p \not\in Y} \log (20 |k_p-l_p|^2) \ll N \log_4 t \ll \frac{\log t \log_4 t}{\log_2t } .
\end{equation}
On the other hand, \eqref{qG'/qGbound} and the assumption $q(G')<t^3 q(G)$ lead to
\[ \log t \gg \sum_{p \in \mathcal{P}_{\mathrm{diff}}(G')} |k_p-l_p| \log p \ge \log_3 t \sum_{p \in Y} \log p \gg (\log_3 t) |Y| \log |Y|, \]
hence $|Y| \ll (\log t)/(\log_2 t \log_3 t)$. The previous formula also shows that $\sum_{p \in Y} |k_p-l_p| \ll \log t$. An application of the inequality of arithmetic and geometric means thus yields
\[ \begin{split} \sum_{p \in Y} \log (20 |k_p-l_p|^2) \le 2 \sum_{p \in Y} \log (20 |k_p-l_p|) &\le 2|Y| \log \frac{\sum_{p \in Y} 20|k_p-l_p|}{|Y|} \\ &\ll |Y| \log \left(\frac{\log t}{|Y|}\right) \\ &\ll \frac{\log t}{\log_2 t}. \end{split} \]
The previous formula and \eqref{pnotinY} thus give
\[ -\log \frac{\delta(G')}{\delta(G)} \le \sum_{p \in \mathcal{P}_{\mathrm{diff}}(G')} \log (20|k_p-l_p|^2) \ll \frac{\log t}{\log_2 t} . \]
Hence $-\log (\delta(G')/\delta(G)) \le (1/4) \log F(t)$ for large enough $t$, that is, $\delta(G')/\delta(G) \ge 1/F(t)^{1/4}$, and we obtain the desired result.
\end{proof}

\section*{Acknowledgments}

CA is supported by the Austrian Science Fund (FWF), projects F-5512, I-3466, I-4945, I-5554, P-34763, P-35322 and Y-901. BB is supported by the Austrian Science Fund (FWF), project F-5510.


\end{document}